\newtheorem{tw}{Theorem} 
\newtheorem{wniosek}[tw]{Corollary} 
\newtheorem{lemat}[tw]{Lemma} 
\newtheorem{fakt}[tw]{Fact} 
\theoremstyle{definition}
\newtheorem{definicja}[tw]{Definition} 
\newtheorem{konwencja}[tw]{Convention} 
\newtheorem{przyklad}[tw]{Example} 
\newtheorem{uwaga}[tw]{Remark} 
\newcommand{\qcr}[1]{\ulcorner #1 \urcorner}
\newcommand{\df}[1]{\textbf{#1}}
\newcommand{\Prov}{\textnormal{Pr}}
\newcommand{\CT}{\textnormal{CT}}
\newcommand{\PA}{\textnormal{PA}}
\newcommand{\Sent}{\textnormal{Sent}}
\newcommand{\Subst}{\textnormal{Subst}}
\newcommand{\val}[1]{({#1})^{\circ}}
\newcommand{\len}{\textnormal{len}}
\newcommand{\TermSeq}{\textnormal{TermSeq}}
\newcommand{\conc}{^\frown}
\newcommand{\set}[2]{\{#1 \ \ | \ \ #2 \}}
\newcommand{\term}{\textnormal{Term}}
\newcommand{\form}{\textnormal{Form}}
\newcommand{\Th}{\textnormal{Th}}
\newcommand{\was}[1]{\{#1\}}
\newcommand{\Seq}[1]{\langle #1 \rangle}
\newcommand{\PRA}{\textnormal{PRA}}
\newcommand{\Ax}{\textnormal{Ax}}
\newcommand{\num}[1]{\underline{#1}}
\title{Notes on bounded induction for the compositional truth predicate}
\author{Bartosz Wcisło, Mateusz Łełyk}
\begin{document}

\maketitle

	\begin{abstract}
	We prove that the theory of the extensional compositional truth predicate for the language of arithmetic with $\Delta_0$-induction scheme for the truth predicate and the full arithmetical induction scheme is not conservative over Peano Arithmetic. In addition, we show that a slightly modified theory of truth actually proves the global reflection principle over the base theory.
	\end{abstract}
	
\section{Introduction}

This paper concerns conservativeness of the compositional truth predicate with bounded induction over Peano Arithmetic. We say that a theory $Th_1$ is \df{conservative} over a theory $Th_2$ with respect to a class of formulae $\Gamma\subseteq \Sent_{\mathcal{L}_{Th_2}}$ (or simply $\Gamma$-conservative) iff for every sentence $\phi \in \Gamma$ if $Th_1 \vdash \phi,$ then $Th_2 \vdash \phi.$ If $\Gamma$ is the whole class of first-order formulae over the language of the theory $Th_2,$ then we simply say that $Th_1$ is \df{conservative} over $Th_2.$ If $\Gamma$ happens to be the class of all formulae in the language of Peano Arithmetic, then we say that $Th_1$ is \df{arithmetically conservative} over $Th_2.$ Verifying various conservativeness results for theories of interest forms an established line of research. It is important from both philosophical and purely logical point of view. From a philosophical point of view, it might be argued that conservativeness of $Th_1$ over $Th_2$ assures that accepting the axioms of the former theory 
does not force us to make any new commitments as to what is actually the case than accepting the latter. This motivation is particularly important in case of the truth predicate, whose triviality is extensively discussed in contemporary philosophy. Namely, the adherents of the  \df{deflationary theory of truth} claim that the truth predicate does not have any actual content but is rather a purely logical device. This vague and imprecise claim has been explicated by some critics (most notably Horsten in \cite{horsten}, Shapiro in \cite{shapiro} and Ketland in \cite{ketland}) in terms of conservativeness. Were the truth predicate void of substantial content, whatever that would exactly mean, the axioms governing it should not allow us to derive more arithmetical theorems than our theory of arithmetic alone.  Thus conservativeness results help to clarify the picture and to distinguish the principles of truth that account for it having a substantial content. From the point of view of pure logic, conservativeness is one of the very basic relations between the theories of interest. It shows which principles may be added to a given theory without the risk of inconsistency or inadequacy.  In this context it is important that in practice most conservativeness results are possible to obtain in weak theories like $\PRA$  or $I\Sigma_1$.\footnote{Theorem 5.2 in \cite{enayatvisser} states that the conservativity proof for $\CT^-$ may be carried out in $\PRA.$ The same argument is valid in the case of $\CT^- $ with the internal induction for the arithmetical formulae. In \cite{leigh} (Theorem 2) it is shown that $\CT^-$ and $\CT^- $ with the internal induction for the arithmetical formulae are conservative over $\PA$ provably in $I \Delta_0 + \exp_1$, where $\exp_1$ denotes the hyper-exponential function.}  Moreover verifying that one theory is conservative over another might deliver new information about the strength of the latter, especially when the first one allows us to employ notions and principles which are \textit{prima facie} not available in the second one.


The situation was particularly interesting in the case of the compositional truth predicate over $\PA.$ Let us first define what we precisely mean by this notion. Before that, let us introduce a few notational conventions. We assume that the reader is familiar with the arithmetization of syntax as explained, e.g. in \cite{kaye}. 

\begin{konwencja}\label{konw: glupia konwencja}
	
	\begin{itemize}
		
		\item We assume that we have some fixed G\"odel coding. By $\qcr{x}$ we denote the G\"odel code of $x$. We will also use $\qcr{x}$ to represent the numeral for the G\"odel code of $x$.
		\item ''$y=\num{x}$'' is an arithmetical formula representing a natural primitive recursive function assigning to an element $x$ the code of its numeral, i.e. $\qcr{SS\ldots S(0)},$ where the successor function symbol $S$ occurs $x$ times;
		\item $\term (x)$ is some fixed arithmetical formula representing the set of (G\"odel codes of) arithmetical terms;
		\item the arithmetical formula $\form(x)$ represents the set of (G\"odel codes of) arithmetical formulae, the formula $\Sent(x)$ represents the set of (G\"odel codes of) arithmetical sentences, i.e. formulae with no free variables;
		\item the arithmetical formula $\Ax_{\PA}(x)$ represents the set of axioms of Peano Arithmetic;
		\item by a proof in the sequent calculus from some set of axioms $\Gamma$ we mean a proof in the sequent calculus, where the additional initial sequents ''$\longrightarrow \phi$'' are allowed, where $\phi \in \Gamma$. In particular, by a proof of $\phi$ from $\PA$ in the sequent calculus we mean an ordinary proof of $\phi$ from axioms of $\PA$ in sequent calculus for first-order logic;\footnote{Rather than a proof of $\phi$ from the empty set of premises in sequent calculus with the additional induction rule or with the $\omega$-rule.}
		\item  by $\Prov_{\Th}(x)$ we mean an arithmetical formula formalising the unary relation: ''there exists (a code of) a proof $d$ of the sentence $x$ in the sequent calculus from the axioms of the theory $\Th$'';		 
		\item if $\tau(y)$ is some formula, then by $\Prov_{\tau}(x)$ we mean an arithmetical formula formalising the relation: ''there exists (a code of) a proof $d$ of the sentence $x$ in the sequent calculus, where the initial sequents of the form '$\longrightarrow \phi$' are allowed for $\phi$ such that $\tau(\phi)$''. Since in our applications $\tau(y)$ will be thought of as some form of a truth predicate, $\Prov_{\tau}(x)$ reads as: ''there exists a proof of $x$ from true premises'';  
		\item  $\Subst(\phi(v), t)$ is an arithmetical formula representing the primitive recursive substitution function, which assigns to a (code of a) formula $\phi(v)$ with at most one free variable and (a code of a) term $t$ the unique (code of the) sentence resulting from substituting $t$ for every free occurrence of $v$ in $\phi$. Additionally we assume that $\Subst$ is the identity function whenever applied to sentences (i.e., whenever there is no free variable in $\phi$);
		\item the arithmetical formula  ''$y=t^{\circ}$'' represents a natural primitive recursive function assigning to each (code of a) term $t$ its value (and undefined if $t$ is not a code of a term); 
		\item  ''$x \in y$'' is an arithmetical formula expressing that $x$-th bit of the binary expansion of $y$ is $1$ and we write that $ x \notin y$ iff either it is $0$ or $y < 2^x;$ 
		\item let $M$ be an arbitrary model of a signature expanding the language of $\PA.$ Let $I \subset M$ be an arbitrary set. We say that $I$ is \df{coded} in $M$ iff there exists an element $c$ such that $x \in I$ iff $M \models x \in c.$ We call that $c$ the \df{code of the set} $I$. We will sometimes identify the subset $I$ with its code $c$.\footnote{There is a difference between the above definition of a coded set, and the notion of a \emph{coded subset of $\omega$}. Usually we say that a subset $A \subseteq \omega$ is coded in a model $M \supset \omega$ iff there exists $c$ such that $\set{x \in M}{M \models x \in c} \cap \omega = A.$ The set $A$ will not, in general, be coded in any model of $\PA$ in our sense of coding.} 
		
	\end{itemize}
\end{konwencja}

We will usually suppress the distinction between syntactical objects and their codes, e.g. we will sometimes write, e.g. $\Prov_{\Th}(0=0)$ instead of $\Prov_{\Th}(\qcr{0=0}).$ In order to avoid any confusion let us introduce a few more notational conventions.

\begin{konwencja} \
	
	\begin{itemize} 
		\item By using variables $\phi, \psi$ we implicitly restrict quantification to (G\"odel codes of) arithmetical sentences. I.e. by $\forall \phi \ \ \Psi( \phi)$ we mean $\forall x \ \ \Sent(x) \rightarrow \Psi(x)$ and by $\exists \phi \ \ \Psi(\phi)$ we mean $\exists x \ \ \Sent(x) \wedge \Psi(x).$ For brevity we will sometimes also use variables $\phi, \psi$ to run over arithmetical formulae, whenever it is clear from the context which one we mean; 
		\item similarly, $\phi(v), \psi(v)$ run over arithmetical formulae with at most one indicated free variable (i.e. $\phi(v)$ is either a formula with exactly one free variable or a sentence); 
		\item  $s,t$ run over codes of closed arithmetical terms;
		\item $v,v_1,v_2, \ldots, w, w_1, w_2, \ldots$ run over codes of variables;
		\item  to enhance readability we suppress the formulae representing the syntactic operations. We will usually be writing the results of these operations instead, e.g.  $\Phi(\psi \wedge \eta)$ instead of $\Phi(x) \wedge$ ''$x$ is the conjunction of $\psi$ and $\eta$'' similarly, we write $\Phi(\psi(t))$ instead of $\Phi(x) \wedge \textnormal{Subst}(\psi,t)$;	
		\item Let $M$ be an arbitrary model of $\PA$.  If $x \in M$ is greater than $k$ for any number $k \in \omega,$ then we call it \df{nonstandard}. We call it \df{standard} otherwise. Since we often ignore the difference between syntactical objects and their G\"odel codes, we will often say that a sentence, formula or term is nonstandard meaning that its code $x$ satisfies in $M$ all the sentences $x>k$ for natural numbers $k$. 
	\end{itemize}			
\end{konwencja}

\begin{definicja}\label{CT}
	By $\CT^-$ we mean the theory obtained by extending the arithmetical signature with an additional predicate $T(x)$ (with the intended reading ''$x$ is a G\"odel code of a true sentence'') and extending axioms of $\PA$ with the following ones:
	\begin{enumerate}
		\item $\forall s,t \ \Bigl( T(s=t) \equiv s^{\circ}=t^{\circ} \Bigr).$
		\item $\forall \phi, \psi \ \Bigl( T(\phi \otimes \psi) \equiv T \phi \otimes T \psi \Bigr).$
		\item $\forall \phi \ \Bigl( T (\neg \phi) \equiv \neg T \phi \Bigr).$
		\item $\forall v, \phi(v) \ \Bigl( T (Q v \ \phi(v)) \equiv Q x \ T(\phi(\num{x})) \Bigr).$ 	
	\end{enumerate}				
	
	Here $\otimes \in \{ \wedge, \vee \}$ and $Q \in \{\forall, \exists \}.$
\end{definicja}

Note that we in $\CT^-$ \emph{do not assume} that any formulae with the truth predicate  satisfy the induction scheme. 

\begin{uwaga}
	Note that the compositionality axioms for quantifiers which we have listed above are \emph{not} exactly the ones typically assumed in the definition of compositional theories of truth with no induction. For simplicity, let us discuss this  difference in a specific case of the universal quantifier, although analogous remarks apply to the existential quantifier as well. A standard axiom as given e.g. in \cite{halbach} has the following form:
	
	\begin{displaymath}
	\forall v, \phi(v) \ \ T(\forall v \ \phi(v)) \equiv \forall t \ T(\phi(t)). \tag*{$(*)$}
	\end{displaymath}
	
	According to the above axiom a universal formula $\forall v \ \phi(v)$ is true only if for arbitrary term $t$ its substitutional instance $\phi(t)$ is true, which is not the same as to say that for arbitrary \emph{numeral} $\num{x}$ the formula $\phi(\num{x})$ is true. In the presence of $\Sigma_1$-induction for the formulae containing truth predicate both versions of the quantifier axioms are equivalent, since in such a case we may prove the principle of extensionality (or, as we will also sometimes call it, \df{regularity}), i.e. the following sentence:
	
	\begin{equation*} \label{regularity}
	\forall \phi \forall t,s\ \ \bigl(  s^{\circ} = t^{\circ} \rightarrow  T\phi(t)\equiv T\phi(s)\bigr). \tag{\textnormal{REG}}
	\end{equation*}
	
	The principle states that the truth-value of a formula  does not depend on specific terms which occur in the formula, but rather on their values (and $\PA$ proves that for every term there exists a numeral with the same value). It is arguable however, whether the axiom $(*)$ as stated above is really intuitive under the assumption that we lack any induction for the extended language whatsoever. Presumably the initial intuition is that $\forall x \ \phi(x)$ is true iff the formula $\phi(x)$ is satisfied by all elements. If we want to work with the truth predicate rather than the satisfaction relation, we have to rethink what it means to be satisfied by all elements. One way to say it is that for \emph{an arbitrary} element $x$ the sentence $\phi(t)$ is true, where $t$ is \emph{some} term denoting $x.$ And since in systems whose intended domain are natural numbers every element $x$ is denoted by a numeral $SS \ldots S0$ (with $S$ repeated $x$ times), this intuition seems to be perfectly addressed by the axiom we opt for, i.e.:  
	
	\begin{displaymath}
	\forall v, \phi(v) \ \ T(\forall v \ \phi(v)) \equiv \forall x \ T(\phi(\num{x})). 
	\end{displaymath}
	
	On the other hand, there is the intuition that universal sentences should satisfy \textit{dictum de omni} principle, i.e. whenever $\forall v \ \phi(v)$ is true, we expect $\phi(t)$ to be true for arbitrary terms. These two intuitions for the truth of universal sentences may diverge when we lack induction for the formulae containing the truth predicate, but we see no obvious reason why substitution principle should be regarded more essential than the intuition that all natural numbers can be named by numerals.  Therefore, we do not think that the compositional axioms for quantifiers which we assume are less natural than the ones that are typically formulated. Namely, we assume that to infer that a universal sentence $\forall v \ \phi(v)$ is true, we only need to assume that $\phi(\num{x})$ holds for all numerals $\num{x}$ rather than require the stronger hypothesis to hold, namely that $\phi(t)$ holds for all \emph{terms}. Probably the most satisfactory solution, when working with such weak theories as $\CT^-$, would be to embrace both intuitions and accept asymmetric compositional axioms for the universal quantifier, i.e.:
	
	\begin{enumerate}
		\item $\forall v, \phi(v) \ \ T\bigl(\forall v \ \phi(v)\bigr) \longrightarrow \forall t \ T\bigl(\phi(t)\bigr).$
		\item $\forall v, \phi(v) \ \ \forall x \ T\bigl(\phi(\num{x})\bigr) \longrightarrow T\bigl(\forall v \ \phi(v)\bigr).$
	\end{enumerate}
	
	Of course, in this case, we should also introduce an analogous pair of axioms for the existential quantifier. We remark that, since a variant of $\CT^-$ using the above asymmetric axioms for quantifiers is stronger than the one chosen by us, our non-conservativity results apply also to the extensions of $\CT^-$ with the compositional axioms for quantifiers defined in such way.\footnote{We thank the anonymous referee for the remark that led to this discussion.}
\end{uwaga}

Let us make one more comment at the end of this section. Taking into account that the choice of the precise formulation for the quantifier axioms seems problematic, one may wonder, whether it would not be easier to prove our results for the compositional \emph{satisfaction} predicate rather than the truth predicate, since then there seems to be a canonical choice of quantifier axioms.

The basic reason why we have decided to work with the truth predicate is  rather trivial. We wanted to conform to the standard conventions in the field of the axiomatic truth theories, especially that in the context of weak theories results obtained for the satisfaction predicate do not in general carry over to the truth-theoretic framework quite automatically precisely because of the extensionality issues.

\section{Known Results on Conservativity of Extensions of $\CT^-$}

Let us list a few important theories obtained via augmenting $\CT^-$ with some induction.

\begin{definicja} \label{CT0}
	By $\CT$ we mean the theory obtained by adding to $\CT^-$ all the instances of the induction scheme (i.e. including the ones for formulae containing the truth predicate). By $\CT_1$ we mean $\CT^-$ with induction for $\Pi_1$-formulae containing the truth predicate. By $\CT_0$ we mean $\CT^-$ with induction for $\Delta_0$ formulae containing the truth predicate. 
\end{definicja}

One of the most important questions of theory of truth may be now rephrased: which reasonable extensions of $\CT^-$ are conservative over $\PA$? This question is indeed nontrivial thanks to the following theorem:\footnote {Let us explain ourselves for this complicated attribution. A related result has been obtained  by Kotlarski, Krajewski and Lachlan in \cite{kkl}, who essentially proved Theorem \ref{tw_kkl} for a variant of $\CT^-$ with \emph{satisfaction} relation in place of the truth predicate. However, it is by no means obvious to us how to modify the proof of Kotlarski, Krajewski and Lachlan so that it works for the \emph{truth} predicate. In order to do this we would apparently have to prove that the extensionality principle for the satisfaction relation is conservative over $\PA$, which does not seem trivial. Theorem \ref{tw_kkl} for the version of $\CT^-$ axiomatised in \emph{purely relational language} was proved by Enayat and Visser in \cite{enayatvisser}. The full-blown result was shown by proof-theoretic methods by Leigh in \cite{leigh}.}

\begin{tw}[Krajewski--Kotlarski--Lachlan, Enayat--Visser, Leigh] \label{tw_kkl}
	$\CT^-$ is conservative over $\PA.$
\end{tw}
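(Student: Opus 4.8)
The plan is to prove the conservativity of $\CT^-$ over $\PA$ by a model-theoretic argument, showing that every (countable, recursively saturated) model $\mathcal{M} \models \PA$ admits an expansion to a model $(\mathcal{M}, T) \models \CT^-$. By the completeness theorem this suffices: if $\CT^-$ proved some arithmetical sentence $\phi$ not provable in $\PA$, there would be a countable recursively saturated $\mathcal{M} \models \PA + \neg\phi$, and such an $\mathcal{M}$ could then not be expanded to a model of $\CT^-$, contradicting the expansion claim. Restricting to recursively saturated models is harmless for the conservativity statement, since every consistent arithmetical theory has such models, and it is precisely recursive saturation that will make the satisfaction class available.

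First I would make precise what a truth predicate on $\mathcal{M}$ must be: a subset $T \subseteq \Sent_{\mathcal{M}}$ (the internal, possibly nonstandard, sentences of $\mathcal{M}$) satisfying the compositional clauses of Definition \ref{CT}. The central tool is the existence of a \emph{full inductive satisfaction class}, equivalently a truth class closed under the compositional conditions, on any countable recursively saturated model. The key technical step is a compactness-plus-saturation construction: one approximates $T$ by assigning truth values to sentences of bounded syntactic complexity (bounded \emph{depth} of logical connectives), verifies that each such finite approximation can be consistently extended one complexity level higher while respecting the Tarski clauses, and uses recursive saturation to realize the type describing a total compositional assignment. The compositional clauses for the quantifiers are handled using the values of terms $t^{\circ}$ in $\mathcal{M}$ and the numerals $\num{x}$ for elements $x \in M$.

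The single hardest point is the treatment of \emph{nonstandard} sentences: a sentence $\phi \in \Sent_{\mathcal{M}}$ may have nonstandard logical depth, so one cannot simply build $T$ by external induction on the structure of $\phi$. This is exactly where recursive saturation enters. I would formulate, for each element of $M$ coding a purported ``partial evaluation'' of sentences up to some syntactic rank, a recursive type expressing that the evaluation obeys the Tarski conditions and is consistent; recursive saturation then yields a single element of $M$ coding a coherent total assignment, from which $T$ is read off. Making the relevant type recursive, and checking that every finite fragment is satisfiable in $\mathcal{M}$ (so that the type is finitely realized), is the delicate part and relies essentially on the arithmetized completeness / the ability to carry out the Tarskian recursion inside $\mathcal{M}$ on a cut. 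The Enayat--Visser approach avoids the combinatorics of the original Kotlarski--Krajewski--Lachlan construction by instead building an elementary chain of models together with partial truth classes and taking a union, so that the compositional clauses are verified in the limit; either route ultimately rests on the same saturation-driven extension lemma.

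I expect the main obstacle to be verifying the quantifier clauses of Definition \ref{CT} rather than the propositional ones: the condition $T(Qv\,\phi(v)) \equiv Qx\, T(\phi(\num{x}))$ ties the truth of a nonstandard quantified sentence to the truth of its nonstandard family of numeral instances, and ensuring coherence here — that the partial assignments respect these infinitely many constraints simultaneously — is where the construction must be set up most carefully. Once the expansion $(\mathcal{M}, T) \models \CT^-$ is obtained for arbitrary countable recursively saturated $\mathcal{M} \models \PA$, the conservativity conclusion follows immediately by the completeness-theorem argument sketched above.
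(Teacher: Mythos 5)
Note first that the paper does not prove this theorem at all: it is cited, with a careful attribution footnote, as a combination of external results --- Kotlarski--Krajewski--Lachlan for a \emph{satisfaction}-class variant on countable recursively saturated models, Enayat--Visser for the purely relational formulation of $\CT^-$, and Leigh (proof-theoretically) for the full statement. Your overall skeleton --- expand a countable recursively saturated model of $\PA + \neg\phi$ to a model of $\CT^-$ and contradict by completeness --- is indeed the standard KKL-style route, and that reduction is fine. But two of your steps are genuinely broken. First, the ``central tool'' you invoke, a \emph{full inductive} satisfaction class on every countable recursively saturated model, does not exist: an inductive compositional class turns the expansion into a model of $\CT$, which proves $\mathrm{Con}(\PA)$, so a countable recursively saturated model of $\PA + \neg\mathrm{Con}(\PA)$ admits none. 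Since your argument applies the expansion claim to models of $\PA+\neg\phi$ for \emph{arbitrary} $\phi$, the word ``inductive'' must be dropped; what KKL provide, and all you may use, is a merely compositional (non-inductive) class --- and your parenthetical ``equivalently a truth class'' also glosses over exactly the satisfaction-to-truth transfer problem that the paper's footnote flags as non-obvious (a non-inductive satisfaction class need not be extensional, which is what one needs to verify the numeral-based quantifier axioms of this paper's $\CT^-$).

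Second, and fatally for the mechanism you describe, recursive saturation cannot yield ``a single element of $M$ coding a coherent total assignment, from which $T$ is read off.'' Under the coding used here, $x \in c$ implies $x \leq \log_2 c$, so every coded set is bounded in $M$, while $\Sent_M$ is cofinal in $M$; no element of $M$ codes an assignment to all sentences. The obstruction is not merely one of coding: no full compositional truth class for this $\CT^-$ is even definable in $M$ with parameters. Given any candidate definition $\theta(x)$ (with parameters), formalized diagonalization produces a sentence $\lambda$ which literally has the form $\exists y \, \bigl( \mathrm{Sub}(\num{d}, \mathrm{num}(\num{d}), y) \wedge \neg\theta(y) \bigr)$: it contains nonstandard numerals but has \emph{standard} logical depth, so finitely many applications of the compositional clauses (the numeral clause for $\exists$ and the $(\cdot)^{\circ}$-clause for atoms) force $T$ to evaluate $\lambda$ exactly as $M$ evaluates $\neg\theta(\qcr{\lambda})$, giving $T(\lambda) \equiv \neg T(\lambda)$. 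This is why every known proof builds $T$ as an \emph{external} union of a countable chain of partial, bounded approximations --- recursive saturation (or, in Enayat--Visser, passage to elementary extensions) is used to extend one approximation to the next, never to realize the whole class as the solution of a single type. Your sketch, as written, has no correct construction at its core, and repairing it amounts to reinstating the KKL or Enayat--Visser chain argument in full.
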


This result may be still improved in a substantial way.

\begin{definicja}
	By the principle of \df{internal induction} we mean the following axiom:
	\begin{equation}\label{II}\tag{\textnormal{INT}}
	\forall \phi(v) \ \ \biggl( \forall x \ \Bigl(T (\phi(\num{x})) \rightarrow T(\phi(\num{Sx}))\Bigr) \longrightarrow \Bigl( T (\phi (\num{0})) \rightarrow \forall x  \ T (\phi(\num{x})) \Bigr) \biggr).
	\end{equation}		  
\end{definicja}

The name of this axiom is introduced in analogy to the distinction between internal and external induction axioms in subsystems of second-order arithmetic.

The proof from  \cite{enayatvisser} (as well as the one from \cite{leigh})  of conservativeness of $\CT^-$ over $\PA$ , gives as a corollary the following theorem.\footnote{Actually in the both cited papers a much more general theorem has been presented from which Theorem \ref{EV} follows as a direct corollary, see \cite{enayatvisser}, remarks in Section 6 and \cite{leigh}, Theorem 3.}

\begin{tw}[Enayat--Visser, Leigh] \label{EV}
	$\CT^- +$ \eqref{II} is conservative over $\PA.$
\end{tw}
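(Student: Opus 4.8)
The plan is to prove the stronger Theorem~\ref{EV} by the model-theoretic method of expanding a countable recursively saturated model of $\PA$ with a satisfaction class, strengthening the classical argument behind Theorem~\ref{tw_kkl}. Let $M \models \PA$ be an arbitrary countable model; by conservativity it suffices to find an expansion $(M,T) \models \CT^- + \eqref{II}$, so that no new arithmetical sentence becomes provable. The standard technique (following Enayat--Visser) is to realize such a $T$ as the union of a suitable chain of \emph{partial} truth predicates defined on initial segments of the coded sentences, extending them step by step while preserving the compositional clauses of Definition~\ref{CT}. The key point for the refinement to \eqref{II} is that internal induction is a single (albeit quantified) axiom about the behaviour of $T$ on sentences of the form $\phi(\num{x})$, so we must ensure our construction respects it.

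First I would reduce the problem, via compactness and the Enayat--Visser framework, to a statement about approximations: one shows that over $\PA$ the theory $\CT^-$ admits a \emph{disjunctively correct}, or at least locally coherent, family of finite truth assignments that can be amalgamated. Concretely, I would work with the notion of a partial satisfaction class and use the fact that recursive saturation lets us extend any finite piece of $T$ compatibly with the compositional axioms. The second step is to arrange that each numerical instance of \eqref{II} holds: given a (possibly nonstandard) formula $\phi(v)$, one must guarantee that if $T(\phi(\num{0}))$ holds and $\forall x\,(T(\phi(\num{x})) \rightarrow T(\phi(\num{Sx})))$, then $\forall x\, T(\phi(\num{x}))$. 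The natural way to force this is to build $T$ so that the set $\{x : T(\phi(\num{x}))\}$ is always (externally) an initial segment closed under successor, which in a model of $\PA$ with the right coding forces it to be all of $M$.

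The decisive observation, which the footnote to Theorem~\ref{EV} points at, is that the conservativity proofs in \cite{enayatvisser} and \cite{leigh} already produce a truth predicate with exactly this extra regularity ``for free'': the amalgamation they perform respects substitution of numerals uniformly, so the resulting $T$ automatically satisfies \eqref{II}. Thus the cleanest route is to cite the general conservativity theorem from those papers in the form that yields a satisfaction class closed under the internal induction clause, and then verify that this closure condition is precisely \eqref{II}. This turns the proof into a corollary: inspect the construction, confirm that the step-by-step extension never violates the implication in \eqref{II}, and conclude conservativity exactly as for $\CT^-$.

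The main obstacle I expect is the verification step rather than any new construction: one must check that the particular amalgamation or cut-elimination procedure does not inadvertently assign truth to some $\phi(\num{x})$ for a nonstandard $x$ while leaving a standard predecessor false, which would break the initial-segment property. Controlling this uniformly over all nonstandard $\phi(v)$ simultaneously is the delicate part, and it is exactly where the strength of recursive saturation (in the model-theoretic proof) or the careful bookkeeping of the proof-theoretic derivation (in Leigh's approach) is needed. Since the excerpt permits me to assume Theorem~\ref{tw_kkl} and the cited general results, I would present \eqref{II} as following by direct inspection of those proofs, emphasizing that \eqref{II} is a property of a \emph{single} model expansion and hence does not require strengthening the conservativity machinery itself.
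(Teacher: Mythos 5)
Your opening reduction is false, and it is the only substantive mathematical step you add to the citation. You write: ``Let $M \models \PA$ be an arbitrary countable model; by conservativity it suffices to find an expansion $(M,T) \models \CT^- + \eqref{II}$.'' No such expansion exists in general. From a model $(M,T)$ of the paper's $\CT^-$ one obtains a full satisfaction class by setting $S(\phi,\alpha)$ to hold iff $T$ holds of the sentence obtained from $\phi$ by substituting the numerals $\num{\alpha(v)}$ for its free variables; the numeral form of the quantifier axioms makes this $S$ compositional. Hence, by Lachlan's theorem, every \emph{nonstandard} model of $\PA$ expandable to $\CT^-$ is recursively saturated, and there are countable nonstandard models of $\PA$ that are not recursively saturated (e.g.\ pointwise definable prime models of consistent completions of $\PA$ other than true arithmetic). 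A correct argument must therefore either (i) fix a countable \emph{recursively saturated} model and use the fact that every consistent completion of $\PA$ has one (the Kotlarski--Krajewski--Lachlan route), or (ii) follow Enayat--Visser and build the truth predicate on an \emph{elementary extension} of $M$, as the union of a chain of models each carrying a truth predicate for the sentences of its predecessor; conservativity then follows by elementarity, not by expanding $M$ itself. Your sketch conflates these two routes: you invoke recursive saturation while insisting on an arbitrary $M$, and you describe the Enayat--Visser chain as living inside a single model (``initial segments of the coded sentences''), whereas the chain is a chain of models.

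Secondly, your explanation of why \eqref{II} comes ``for free'' misidentifies the mechanism. It is not that ``the amalgamation respects substitution of numerals uniformly,'' and one cannot simply ``force'' each set $\{x : T(\phi(\num{x}))\}$ to be an initial segment closed under successor --- arranging that simultaneously for all nonstandard $\phi$ \emph{is} the content of \eqref{II}, so as a strategy this is circular. What actually secures \eqref{II} in the cited constructions is that the approximating truth predicates can be taken to be parametrically definable in an ambient model of $\PA$ (in Enayat--Visser, definable in the next model of the chain), so that each relevant set $\{x : T(\phi(\num{x}))\}$ is locally a definable set, and definable sets in models of $\PA$ are inductive; one then includes the instances of \eqref{II} among the axioms handled by the compactness argument. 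This extra feature is exactly what the general theorems the paper points to (Enayat--Visser, Section 6 remarks; Leigh, Theorem 3) encapsulate. For comparison: the paper offers no proof of Theorem \ref{EV} at all --- it records that the conservativity proofs for $\CT^-$ yield it as a direct corollary of those more general results. So your final fallback, conclusion by citation, does match the paper; but the model-theoretic skeleton you wrap around it would not survive as a proof.
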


Let us recall the regularity principle \eqref{regularity}. Instead of considering an unusual variant of $\CT^-$ we could have added the above principle to the standard list of axioms, because it yields both forms of the compositional axiom for the quantifiers equivalent. Fortunately, this would not trivialise our work, thanks to the following theorem, which may be read directly off the Enayat--Visser construction.

\begin{tw}[Enayat--Visser] \label{EVR}
	$\CT^- +$ \eqref{II} + \eqref{regularity} is conservative over $\PA.$
\end{tw}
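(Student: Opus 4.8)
The plan is to prove Theorem~\ref{EVR} through the standard model-theoretic reformulation of conservativity: it suffices to show that every countable model $M \models \PA$ has an elementary extension carrying an expansion to a model of $\CT^- +$ \eqref{II} + \eqref{regularity}. Indeed, if $\PA \not\vdash \phi$ for some arithmetical $\phi$, I would pick a countable $M \models \PA + \neg\phi$, pass to an elementary extension $M' \succ M$ equipped with a truth class $T$ making $(M',T)$ a model of the truth theory, and note that $M' \models \neg\phi$ since $\neg\phi$ is arithmetical. This witnesses $\CT^- +$ \eqref{II} + \eqref{regularity} $\not\vdash \phi$, so the entire content of the theorem lies in producing such an expansion.

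For the expansion I would reuse the very construction that already yields Theorem~\ref{EV}. Given countable $M \models \PA$, the Enayat--Visser argument produces a truth class $T$ on an elementary extension such that $(M',T) \models \CT^-$ and, in addition, $(M',T) \models \eqref{II}$; this is exactly how Theorem~\ref{EV} is obtained. Since the internal induction axiom is thus handed to us for free, the only thing left to check is that this same $T$ also validates the regularity principle \eqref{regularity}. The key observation is that the Enayat--Visser truth predicate is, by design, insensitive to the syntactic presentation of closed terms: the truth values of atomic sentences are fixed by clause~(1) of $\CT^-$, which consults only the values $s^{\circ}$ and $t^{\circ}$, and the remaining clauses merely propagate the atomic assignment. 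Concretely, at each stage of the construction the partial truth assignment can be realised as a genuine satisfaction notion on an elementary extension in which every closed term is interpreted by its value, so that replacing a closed term $t$ by a term $s$ with $s^{\circ} = t^{\circ}$ leaves the interpreted sentence — and hence its assigned truth value — unchanged. This gives $T\phi(t) \equiv T\phi(s)$ whenever $s^{\circ} = t^{\circ}$, which is precisely \eqref{regularity}, and so $(M',T) \models \CT^- +$ \eqref{II} + \eqref{regularity}.

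The point demanding care, and the reason the statement must be \emph{read off the construction} rather than deduced from Theorem~\ref{EV} alone, is that \eqref{regularity} quantifies over all coded formulae $\phi$, including nonstandard ones, while $\CT^-$ grants no induction for formulae containing $T$. One therefore cannot establish \eqref{regularity} by an internal induction on the build-up of $\phi$; it has to be secured externally, as an invariant maintained throughout the entire Enayat--Visser approximation sequence. The main obstacle is accordingly not a new idea but the bookkeeping: I would have to verify that each extension step of the construction can be performed so as to respect term-values uniformly, and — crucially — that imposing this invariant is compatible with the features of the construction that already deliver \eqref{II}. Once this simultaneous compatibility is confirmed, the theorem follows from the reduction in the first paragraph.
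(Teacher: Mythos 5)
Your proposal is correct and takes essentially the same route as the paper: the paper gives no proof of this theorem at all, merely noting that it ``may be read directly off the Enayat--Visser construction,'' and your argument---reducing conservativity to the existence of truth-class expansions of elementary extensions, rerunning the Enayat--Visser chain, and enforcing the regularity principle externally as an invariant of the construction rather than by (unavailable) internal induction on the build-up of possibly nonstandard formulae---is precisely that reading. Your identification of the key point, namely that the approximating truth assignments can be arranged so that truth values depend only on the \emph{values} of closed terms, is the right one, and your proposal leaves the remaining bookkeeping at the same level of detail as the paper itself does.
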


It is easily observed that the internal induction may be proved in a system $\CT_0$ that is in $\CT^-$ with $\Delta_0$-induction for the truth predicate. Then, as we shall briefly indicate, it follows  that $\CT^-$ with $\Pi_1$ induction for the truth predicate is enough to prove the following principle: 

\begin{definicja}
	By the \df{Global Reflection Principle} we mean the following axiom:
	\begin{equation}\label{GRP}\tag{GRP}
	\forall \phi \ \Bigl( \Prov_T( \phi) \longrightarrow T (\phi) \Bigr),
	\end{equation}
where $\Prov_T(x)$ is a special case of the predicate $\Prov_{\tau}(x)$ defined in Convention \ref{konw: glupia konwencja} with $\tau(y) = T(y).$ Hence the intuitive reading of $\Prov_T(x)$ is: ''there exists a proof $d$ of the sentence $d$ in sequent calculus from the initial sequents '$\longrightarrow \phi$', where we have $T(\phi),$ i.e. a proof in sequent calculus from true premises''. 

\end{definicja}

Note that, speaking informally, \eqref{GRP} says that the set of true sentences is closed under reasonings in First-Order Logic.

\begin{definicja}
	By the \df{Axiom Soundness Property} for $\Th$ we mean the following principle:
	\begin{equation}\label{SSP} \tag{ASP}
	\forall \phi \ \Bigl( \Ax_{\Th}(\phi) \longrightarrow T (\phi) \Bigr).
	\end{equation}
\end{definicja}

Note that if a theory of truth proves both the axiom soundness of $\Th$ and satisfies the global reflection principle, then it proves the following statement:

\begin{displaymath}
\forall \phi \ \Bigl( \Prov_{\Th}(\phi) \longrightarrow T (\phi) \Bigr).
\end{displaymath}

Then a crude lower-bound for the strength of non-conservative truth-theoretic principles is given by the following theorem:

\begin{tw}
	$\CT_1$ proves the global reflection principle and the axiom soundness property for $\PA$ and thus the consistency of $\PA.$ In effect, this theory is not conservative over $\PA.$ 	
\end{tw}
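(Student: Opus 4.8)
The plan is to reduce the whole statement to the single fact that, in $\CT_1$, truth is preserved along derivations in the sequent calculus. I would proceed in three stages: establish the internal induction axiom \eqref{II} and the axiom soundness property \eqref{SSP} for $\PA$; prove the global reflection principle \eqref{GRP}; and combine the two to obtain $\mathrm{Con}(\PA)$ and hence non-conservativity.

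First I would check that $\CT_0$ — and a fortiori $\CT_1$ — proves \eqref{II}. Fix $\phi(v)$ and assume $T(\phi(\num{0}))$ together with $\forall x\,(T(\phi(\num{x}))\to T(\phi(\num{Sx})))$. For an arbitrary $n$, use $\PA$ to form the code $s$ of the finite sequence $\langle\phi(\num{0}),\dots,\phi(\num{n})\rangle$; relative to this parameter the statement $T((s)_i)$ is genuinely $\Delta_0(T)$ in $i$, since $(s)_i<s$, so $\Delta_0(T)$-induction on $i\le n$ yields $T((s)_n)=T(\phi(\num{n}))$, and $n$ was arbitrary. For \eqref{SSP} for $\PA$, the finitely many non-induction axioms are standard sentences whose truth follows one at a time from the compositional clauses; for an induction axiom coded by a possibly nonstandard $\theta$, I would unwind $T$ of its universal closure through the compositional clauses for $\forall$, $\to$ and $\wedge$ — using that the quantifier clause is phrased with numerals and that $\num{Sx}$ and $S\num{x}$ denote the same term $S^{x+1}0$ — until the assertion reduces exactly to the instance of \eqref{II} for $\theta(\cdot,\bar{\num{c}})$. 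Thus already $\CT_0\vdash\eqref{SSP}$ for $\PA$, and no appeal to \eqref{regularity} is needed here.

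The heart of the matter is \eqref{GRP}. Given a possibly nonstandard proof $d$ of $\phi$ in the sequent calculus all of whose extra initial sequents $\longrightarrow\psi$ satisfy $T(\psi)$, I would prove by induction on the position $i\le\len(d)$ that the $i$-th sequent of $d$ is \emph{true}, where a sequent $\Gamma\Rightarrow\Delta$ counts as true when, for every assignment $\alpha$ of numerals to the free variables occurring in it, $T(\gamma^\alpha)$ for all $\gamma\in\Gamma$ implies $T(\delta^\alpha)$ for some $\delta\in\Delta$; applied to the final sequent $\longrightarrow\phi$ this gives $T(\phi)$. The crucial complexity observation is that the only genuinely unbounded quantifier in ``the $i$-th sequent is true'' is the leading $\forall\alpha$ — the formulas in $\Gamma$ and $\Delta$ are bounded by the code of the sequent — so this is a $\Pi_1(T)$ formula; hence the induction on $i$ is an instance of $\Pi_1(T)$-induction, available exactly in $\CT_1$. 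The base case covers the logical initial sequents $\psi\Rightarrow\psi$ and the true extra premises, and the propositional rules are handled by the compositional clauses for $\neg,\wedge,\vee$. The quantifier rules are the delicate step: for the eigenvariable rules, substituting a numeral for the eigenvariable matches the numeral form of the compositional quantifier axioms directly, whereas for the rules introducing $\forall$ on the left and $\exists$ on the right, which substitute an arbitrary term $t$, I would pass between $T(\phi^\alpha(t^\alpha))$ and $T(\phi^\alpha(\num{v}))$ for $v=\val{t^\alpha}$ by invoking \eqref{regularity}. Regularity is available in $\CT_1$ because $I\Sigma_1(T)$ and $I\Pi_1(T)$ coincide over $\PA$, and, by the remark preceding \eqref{regularity}, $\Sigma_1(T)$-induction suffices to derive \eqref{regularity}.

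Finally, combining \eqref{GRP} with \eqref{SSP} for $\PA$ yields $\forall\phi\,(\Prov_{\PA}(\phi)\to T(\phi))$, as already noted in the text: a $\PA$-proof of $\phi$ is, by soundness of its axioms, a proof from true premises. Since the atomic clause gives $\neg T(\num{0}=\num{1})$ (because $\val{\num{0}}\neq\val{\num{1}}$), we obtain $\neg\Prov_{\PA}(\num{0}=\num{1})$, i.e. $\mathrm{Con}(\PA)$; as this is an arithmetical sentence unprovable in $\PA$ by G\"odel's second incompleteness theorem, $\CT_1$ is not conservative over $\PA$. I expect the main obstacle to be the inductive step for \eqref{GRP}: formulating truth of a sequent with free variables correctly, verifying that each sequent rule preserves it — in particular reconciling the numeral-based quantifier axioms with the term-based quantifier rules through \eqref{regularity} — and confirming that the induction statement genuinely stays within $\Pi_1(T)$, so that it is licensed by the induction available in $\CT_1$.
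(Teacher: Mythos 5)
Your treatment of \eqref{GRP} and the final derivation of the consistency of $\PA$ follow essentially the same route as the paper: working in $\CT_1$, induct on the length of a sequent-calculus proof to show that every sequent in it is true under all substitutions for its free variables, note that the quantifiers over the formulas of a sequent are bounded by (the code of) the proof so that the induction statement is $\Pi_1$ in $T$, and then combine \eqref{GRP} with \eqref{SSP} and $\neg T(\qcr{0=1})$ to get $\mathrm{Con}(\PA)$ and, by G\"odel's second incompleteness theorem, non-conservativity. Your extra care with the $\forall$-left/$\exists$-right rules, bridged by \eqref{regularity} (which is indeed available in $\CT_1$, since $\Sigma_1$-induction for the truth predicate suffices for it), is a legitimate filling-in of a detail the paper's sketch leaves as ``straightforward''.

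The genuine gap is your treatment of \eqref{SSP}, specifically the claim that \emph{already $\CT_0$} proves it by ``unwinding $T$ through the compositional clauses'' of a possibly nonstandard induction axiom. An induction axiom of $\PA$, as an element of a nonstandard model, is the \emph{universal closure} over its parameters of the induction instance, and the block of parameter quantifiers at the front can have nonstandard length. Compositional unwinding peels off one quantifier per step and, absent induction, can be iterated only a standard (external) number of times; it therefore never reaches the inner core of the formula to which \eqref{II} could be applied. What your unwinding does establish in $\CT_0$ is the truth of the \emph{parameter-free} induction axioms, whose outer structure is standard. To traverse a nonstandard block $\forall y_1 \ldots \forall y_k$ one must induct on the number of quantifiers peeled off, and the relevant induction hypothesis --- ``for all numerals $\num{a_1}, \ldots, \num{a_j}$, the result of substituting them is true'' --- quantifies unboundedly over numerals and is thus $\Pi_1$ in $T$, not $\Delta_0$. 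This is exactly the error in Kotlarski's argument dissected in the paper's appendix, and whether $\CT_0$ proves \eqref{SSP} or \eqref{GRP} for its own predicate $T$ is precisely the problem the paper is addressing (its main theorem obtains these principles only for a \emph{defined} predicate $T'$, not for $T$). For the theorem actually at stake the damage is repairable, and the paper's proof shows how: prove the parameter-free instances in $\CT_0$ via \eqref{II}, then use $\Pi_1$-induction on the length of the block of universal quantifiers --- available in $\CT_1$ --- to conclude that every universal closure of an induction instance is true. Your ``already $\CT_0$'' claim should be replaced by this $\Pi_1$-induction step.
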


The above result is obtained as follows: working in $\CT_1$, by a straightforward induction on length of proofs in the sequent calculus we show that for every substitution of closed terms for free variables in the sequent, if every formula in the antecedent is true, then some formula in the succedent of the sequent arrow is true. This is obviously a $\Pi_1$ statement, since the quantifiers ''every formula in the antecedent'', ''every formula in the succedent'' may be assumed to be bounded by the size of the proof in question (under a reasonable coding of syntax and of sets). The consistency of $\PA$ follows, since we may prove in $\CT_0$ that the parameter-free variant of induction holds and then, by $\Pi_1$ induction on the length of the block of universal quantifiers, that any universal closure of an instance of the induction scheme is true as well. 

Then a natural question arises of how to improve bounds on minimal truth-theoretic principles which, combined with $\CT^-$, are non-conservative over $\PA.$ The first natural candidate to consider is the theory $\CT_0.$ Indeed, Kotlarski in his paper \cite{kotlarski} has presented an alleged proof that the theory $\CT_0$  proves the global reflection principle.\footnote{More precisely, he considered a theory of satisfaction rather than truth, and only aimed to show that every formula derivable in $\PA$ is satisfied under all valuations.} Unfortunately, as observed independently by Albert Visser and Richard Heck, the proof contained a gap which seemed to require an essentially new approach to surmount. Actually, after the gap has been revealed, it has been completely unclear whether or not the theorem is true at all. We shall discuss the erroneous proof in the appendix. In the present paper we provide a non-conservativeness proof for our variant of $\CT_0.$ Moreover, we prove that the global reflection principle is arithmetically conservative over $\CT_0.$ In addition we show that a very natural modification of $\CT_0$ actually proves the principle.

\section{The main result}

In the paper \cite{fujimoto} Fujimoto has argued that the right way to compare the conceptual strength of theories of truth $Th_1, Th_2$ over the same base theory $B$ (in our case $B = \PA$) is to check whether $Th_1$ defines a truth predicate satisfying axioms of $Th_2.$ In such case we say that $Th_2$ is \df{$B$-relatively definable}\footnote{Fujimoto calls this relation relative \textit{truth} interpretability (and keeps the parameter $B$ implicit).}
in $Th_1.$ The relative definability is a very natural and strong relation between two theories. In particular, if $\Th_2$ is relatively definable in $\Th_1$, it implies the following other relations\footnote{All the  three points have been observed in the original paper \cite{fujimoto}, see p. 324 for the first two of them and  Proposition 28 (1) for the third one.} (on the other hand none of the these relations implies that $\Th_2$ is relatively interpretable in $\Th_1$):

\begin{enumerate}
	\item $\Th_2$ is arithmetically conservative over $\Th_1$.
	\item $\Th_2$ is interpretable is $\Th_1.$ 
	\item Every model of $\Th_1$ expands to a model of $\Th_2$.

\end{enumerate}

Let us briefly comment upon the first of the three points. Suppose that $\Th_1$ relatively interprets $\Th_2$, i.e. there is a formula $\tau(x)$ which provably  satisfies the truth axioms of $\Th_2$. Consider any proof in $\Th_2$ with an arithmetical consequence $\phi.$ Then replace all occurrences of the truth predicate in that proof with the formula $\tau(x)$ and precede all the uses of $\Th_2$'s truth-theoretic axioms with a proof in $\Th_1$ that $\tau(x)$ indeed satisfies the axioms that we used. In such a way we can obtain a proof of $\phi$ in $\Th_1$.

 We show that in this sense $\CT_0$ with the global reflection principle and the axiom soundness property is as strong as $\CT_0$ alone.

\begin{tw} \label{tw_niekonserwatywnosc_ct0}
	$\CT_0$ with the global reflection axiom and the axiom soundness property for $\PA$  is $\PA$-relatively definable in $\CT_0.$ In particular $\CT_0$ is not conservative over $\PA.$
\end{tw}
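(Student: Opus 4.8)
The plan is to follow Fujimoto's relative definability route: I would produce a single formula $\tau(x)$ in the language of $\CT_0$ (arithmetic together with the given predicate $T$) and verify that $\CT_0$ proves every axiom of $\CT_0 + \eqref{GRP} + \eqref{SSP}$ with $T$ replaced by $\tau$. Granting this, the ``in particular'' clause is immediate: as noted after the definition of \eqref{SSP}, any theory proving \eqref{GRP} together with the axiom soundness property for $\PA$ proves $\forall \phi\,(\Prov_{\PA}(\phi) \to T(\phi))$, and since the atomic clause forces $\tau(0=1) \equiv (0=1)$ to be false, instantiating at $\phi = (0=1)$ yields $\neg\Prov_{\PA}(0=1)$, i.e.\ the consistency of $\PA$. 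As this is a genuine arithmetical theorem not provable in $\PA$, non-conservativity follows.

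The design constraints on $\tau$ pin down what it must be. The boolean and atomic clauses of $\CT^-$ for $\tau$ amount to saying that the set $\{\phi : \tau(\phi)\}$ is complete and consistent, and \eqref{GRP} for $\tau$ says exactly that this set is deductively closed; the quantifier clause $\tau(\forall v\,\phi(v)) \equiv \forall x\, \tau(\phi(\num{x}))$ forces it to be $\omega$-complete with respect to the numerals of the ambient model, which is to say that $\tau$ must again be a \emph{compositional} truth predicate for the model, necessarily agreeing with $T$ on all standard sentences. Thus the real content is to manufacture, definably from $T$, a second compositional truth class on the same model that is in addition deductively closed and contains a completion of $\PA$. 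Once such a $\tau$ exists, \eqref{GRP} and \eqref{SSP} come essentially for free, and I would read the boolean clauses off completeness and consistency, so the entire weight of the argument rests on the compositional clauses and on consistency.

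For the constructive engine I would first establish, working in $\CT_0$, the generalized compositionality of $T$ over coded sequences of possibly nonstandard length: for a coded sequence $\langle \phi_i \rangle_{i<c}$ one has $T\bigl(\bigvee_{i<c}\phi_i\bigr) \equiv \exists i<c\ T(\phi_i)$ and dually for conjunctions. The point is that the matrix of the relevant induction formula is bounded in $T$, so this is accessible to the $\Delta_0$-induction available in $\CT_0$ (a ``disjunction with a stopping condition'' argument). Together with the internal induction principle \eqref{II}, which $\CT_0$ proves, this lets me evaluate the truth of the nonstandard boolean combinations that arise when a sequent $\Gamma \Rightarrow \Delta$ is read as the sentence $\bigwedge \Gamma \to \bigvee \Delta$, and to run the soundness induction along a proof from true premises, reducing \eqref{GRP} for $\tau$ to the composability of proofs.

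The main obstacle --- and the precise location of the gap in Kotlarski's original argument --- is the compositional clause for the universal quantifier. Unwinding $T(\forall v_1 \cdots \forall v_m\, \chi)$ into $\forall x_1 \cdots \forall x_m\, T(\chi(\num{x_1},\ldots,\num{x_m}))$ requires an induction on the block length $m$, and because each step exposes an unbounded numeral quantifier this induction statement is genuinely $\Pi_1$ rather than $\Delta_0$; for nonstandard $m$, which is unavoidable inside nonstandard proofs, $\Delta_0$-induction does not reach it. This is exactly why one cannot simply prove \eqref{GRP} for $T$ itself in $\CT_0$, and why a new predicate is needed. I would resolve it by not defining $\tau$ uniformly on all of the model at once but by cutting down to a definable initial segment on which proofs, and hence the quantifier blocks occurring in them, stay short enough for the generalized compositionality to control them, and then transferring the resulting deductively closed compositional class back to a total predicate (e.g.\ via an arithmetized completeness theorem applied to the completion of $\PA$ so obtained). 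Verifying that this cut can be chosen definably, that the transfer preserves both compositionality and $\Delta_0$-induction for $\tau$, and above all that the ambient set of true premises is \emph{consistent}, i.e.\ that $\CT_0 \vdash \neg \Prov_T(0=1)$ for the truth class it is handed, is where I expect essentially all of the difficulty to concentrate.
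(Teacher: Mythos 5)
Your overall frame coincides with the paper's: exhibit a formula $\tau$ and verify in $\CT_0$ the compositional axioms, \eqref{GRP} and \eqref{SSP} for it, then extract $\textnormal{Con}(\PA)$ from $\neg\tau(0=1)$ and $\forall \phi\,(\Prov_{\PA}(\phi)\rightarrow\tau(\phi))$; your preparatory lemmas (disjunctive correctness and internal induction \eqref{II}, both by $\Delta_0$-induction) and your diagnosis of the obstacle (nonstandard quantifier blocks force a $\Pi_1$ induction) are also exactly the paper's. The gap is in the constructive core. First, the arithmetized-completeness transfer is incompatible with the very constraint you isolate: the completion of $\PA$ produced by arithmetized completeness inside a model $M$ is the theory of an internal term model which is in general a proper extension of $M$, so it contains sentences $\exists v\,\phi(v)$ without containing any instance $\phi(\num{x})$ for $x\in M$; hence the clause $\tau(\exists v\,\phi(v))\equiv\exists x\,\tau(\phi(\num{x}))$ fails for the transferred predicate. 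As you yourself observe, the numeral clause together with the atomic and negation clauses forces $\tau$ to be a truth class for $M$ itself (the term model collapses onto $M$ via $a\mapsto[\num{a}]$), and nothing in the Lindenbaum-style construction --- which decides sentences by syntactic consistency search, not by consulting $M$ --- secures this; it would have to be handed a truth class in order to output one, which is circular. Second, restricting to a definable cut cannot verify \eqref{GRP}, since $\Prov_{\tau}$ quantifies existentially over \emph{all} proofs coded in the model; soundness for proofs lying in an initial segment is a strictly weaker statement. Third, $\Delta_0$-induction for $\tau$ after the transfer is asserted, not proved.

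The idea you are missing, which is the paper's actual engine, is different. For each $c$ (possibly nonstandard) one defines an \emph{arithmetical} ``simplistic'' partial truth predicate $T_c(x)$ (a disjunction over depths $j\leq c$ of Tarskian clauses $\Theta_j$) and sets $T'_c(x):=T(T_c(\num{x}))$. The key step is Lemma \ref{lem_generalised_commutativity}: for every formula $\phi$ in the language with an extra predicate $P$, $\CT_0$ proves that $\phi[T'_c/P](x_1,\ldots,x_n)$ is equivalent to $T(\psi(\num{x_1},\ldots,\num{x_n}))$ for a single arithmetical $\psi$; combined with internal induction \eqref{II} this gives each $T'_c$ the \emph{full} induction scheme (Lemma \ref{lemat_induktywnosc_TTc}). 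This is how $\Delta_0$-induction for $T$ is parlayed into unrestricted induction for the defined predicates --- it dissolves your $\Pi_1$ obstacle rather than evading it. Since $T'_c$ is moreover compositional on formulas of depth below $c$ (Lemma \ref{kompozycyjnosc ttc}) and the family is coherent (Lemma \ref{lem: zgodnosc i okreslonosc TTc}), one takes $\tau=T'$, the union over $c$ of the $T'_c$: given any proof $d$ from $T'$-true premises, choose $b$ bounding the depth of every formula occurring in $d$, note that $T'$ agrees with $T'_b$ on all such formulas, and run the ordinary soundness induction using the fully inductive, sufficiently compositional $T'_b$. Bounded induction for $T'$ also falls out, because below any bound $T'$ coincides with a fully inductive predicate and hence is coded. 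No cut and no completeness theorem appear anywhere in the argument.
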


In other words, there is a formula $T'(x)$ such that in $\CT_0$ one can prove that $T'(x)$ satisfies compositional conditions for arithmetical sentences, \linebreak global reflection principle and axiom soundness property for $\PA$. Moreover, we can prove in $\CT_0$ every instance of $\Delta_0$-induction scheme for the predicate $T'(x)$.

Before we present the proof of the above theorem in full detail, let us give a sketch of our argument. This outline will be imprecise and not fully correct, but the rest of the section will be generally devoted to spelling out all the details in a proper manner.

 One of the main tools in metamathematics are various forms of partial truth predicates. Probably the best known of them are arithmetical truth predicates for the classes $\Sigma_n$.\footnote{For a definition see e.g. \cite{hajekpudlak} Definitions 1.71 and 1.74. See \cite{hajekpudlak} pp. 50--61 for a general discussion.}  Consider a partial arithmetical truth predicate $\tau(x)$. Its main feature is that for some formulae we have:

\begin{displaymath}
\tau(\qcr{\phi}) \equiv \phi.
\end{displaymath}

If $\tau(x)$ is an arithmetical formula and the truth predicate $T(x)$ satisfies $\CT^-$, then this entails that for standard sentences $\phi$ we have:

\begin{displaymath}
T\tau(\num{\phi}) \equiv T\phi.
\end{displaymath} 

Now, it may be hoped that we can define some families of formulae formulae $T_c(v)$ (or, more precisely, their codes, so that the parameter $c$ may be nonstandard), which behave like truth predicates in the sense that $\CT_0$ may prove:

\begin{displaymath}
T  T_c(\num{\phi})  \equiv T\phi
\end{displaymath}

for all arithmetical formulae $\phi$ with codes smaller than $c$. We could wish to do still better and to require that the newly defined truth predicates are compositional for small enough formulae in the sense that we have e.g.

\begin{displaymath}
T T_c( \num{\phi \wedge \psi}) \equiv T T_c(\num{\phi}) \wedge T T_c({\num{\psi}}).
\end{displaymath} 

Now, the key idea is that in the presence of $\Delta_0$ induction for our original truth predicate $T(x)$ we may hope for a full induction for the ''truth predicates'' $TT_c(\num{x}).$ Before we explain why this should be case, let us introduce some notation, which will also be used in the proof proper. 

First of all, we define a formula $T'_c(x)$ as $TT_c(\num{x})$. The next notational convention deserves a separate definition.

\begin{definicja}
	Let $P(u)$ be a fresh\footnote{That is, neither from the arithmetical signature, nor $T(u)$.} unary predicate with the only variable $u$, $\delta(x)$ an arbitrary formula and $\psi$ another formula with exactly one free variable, possibly with parameters. Then by
	\begin{displaymath}
	\delta[\psi/P](x)
	\end{displaymath}
	we mean the effect of formally substituting the formula $\psi(t)$ for all occurences of $P(t)$, where $t$ is an arbitrary term, possibly changing the names of the bounded variables so as to avoid clashes. 
	
\end{definicja}

Probably, the above definition is best understood via an example. 

\begin{przyklad}
	Let $\delta(x) = \exists z, w \ \ (P(z+w) \wedge \forall z<w \neg P(z)) \wedge (x=x) \wedge P(x).$ Then 
	\begin{displaymath}
	\delta[(u>u)/P](x) = \exists z, w \ \ (z+w>z+w \wedge \forall z<w \ (\neg z>z)) \wedge (x=x) \wedge x>x.
	\end{displaymath}
	
\end{przyklad}

It is clear that the formula substitution operation may be formalized in $\PA$. 

Let us sketch, why the predicates $T'_c(x) := TT_c(\num{x})$ satisfy full induction scheme. First, one can check that the principle of internal induction \eqref{II} may be proved by a straightforward application of $\Delta_0$-induction. Then we would like to show that for arbitrary arithmetical $\phi$ we have: 

\begin{displaymath}
\Bigl(\forall x \  \bigl(\phi[T'_c/P](x)  \rightarrow \phi[T'_c/P](Sx) \bigr)\Bigr) \longrightarrow \Bigl(  \phi[T'_c/P] (0) \rightarrow \forall x  \  \phi[T'_c/P](x) \Bigr).
\end{displaymath}

 So let us fix any arithmetical formula $\phi$ and consider the formula \linebreak $T(\phi [T_c/P])(x).$ Now, since $T$ is compositional we can push it down the syntactic tree of the (nonstandard, but arithmetical) formula $\phi[T_c/P]$ finitely many levels, until it meets a partial truth predicate $T_c,$ but not any further. Since we assumed that $\phi$ is of standard syntactic shape we do not need any induction to do that. We obtain the following equivalence, which we call \df{the generalised commutativity} principle (strictly speaking, we only get something resembling it --- see the further comments):

\begin{equation*} \label{GC} \tag{GC}
T(\phi[T_c/P](x)) \equiv \phi[TT_c/P](x).
\end{equation*}

Now, by the internal induction principle we have:

\begin{multline*}
\forall x \  \biggl( T\Bigl(\phi[T_c/P](\num{x})\Bigr) \rightarrow T\Bigl(\phi[T_c/P](\num{Sx})\Bigr) \Bigr) \longrightarrow \\  \Bigl( T \Bigl(\phi[T_c/P](\num{0})\Bigr)  \rightarrow \forall x  \  T \Bigl(\phi[T_c/P](\num{x})\Bigr) \biggr).
\end{multline*}

If the \eqref{GC} principle were really the case, we could conclude that:

\begin{displaymath}
\Bigl(\forall x \  \bigl(\phi[T'_c/P](x) \rightarrow \phi[T'_c/P](Sx) \bigr)\Bigr) \longrightarrow \Bigl(  \phi[T'_c/P] (0) \rightarrow \forall x  \  \phi[T'_c/P](x) \Bigr).
\end{displaymath}  

Unfortunately, \eqref{GC} is strictly speaking not true, since we \emph{do not have} for example:

\begin{displaymath}
T(\exists x \ T_c(x)) \equiv \exists x \ T(T_c(x)),  
\end{displaymath}

but rather 

\begin{displaymath}
T(\exists x \ T_c(x)) \equiv \exists x \ T(T_c(\num{x})).
\end{displaymath}

Most probably, reformulating \eqref{GC} in a proper way would be quite messy. Instead of it, we use a different statement suggested to us by Cezary Cieśliński as a way to bypass the mentioned difficulty, which will appear as Lemma \ref{lem_generalised_commutativity}.

So we may hope to construct a family of predicates $T'_c(\num{\phi})$ which are compositional for formulae $\phi$ with the complexity smaller than $c$ (under an appropriate choice of the complexity measure) and satisfy the full induction scheme. Now we are very close to construct a compositional truth predicate satisfying the global reflection scheme. Namely, pick any model $(M,T)$ of $\CT_0$ and choose some formula $\psi$ and a set of sentences $\Gamma$ both in the domain of $M$ such that $M \models \Pr_{\Gamma}(\psi)$ with a proof $d$ in the domain of $M$. Now, one can check that the standard argument using induction on the structure of the proof  allows us to show that if all the sentences $\gamma$ in $\Gamma$ satisfy $T'_d(\gamma)$, then also $T'_d(\psi)$ holds.

The last step of our construction is simply to take the sum over $c \in M$ of the predicates $T'_c(\num{x}).$ If we can show that the predicates $T'_c(\num{x})$ and $T'_d(\num{x})$ agree on the formulae of complexity smaller than both $c$ and $d$, then the predicate $T'(x) = \bigcup_{c \in M} T'_c(x)$ should be fully compositional and satisfy the global reflection principle. Moreover, $T'(x)$ should satisfy $\Delta_0$-induction, since for any $a$ its restriction to the interval $[0,a]$ is equal to $T'_a \cap [0,a]$ and therefore fully inductive.

Now, our task is essentially twofold: first, we have to define the family of arithmetical predicates $(T_c(x))$ such that provably in $\CT_0$ the predicate $T_c(x)$ is compositional for formulae of complexity smaller than $c.$ Second, we have to spell out all the details of the above argument. The rest of this section will be devoted to these issues. Let us start with some definitions, which will play a crucial role in constructing our family of arithmetical truth predicates with nice properties.

\begin{definicja}
	Let $M\models \PA$, $c\in M$ and $(\phi_i)_{i\leq c}\in M$ be a coded sequence of sentences. Then $\bigvee_{i \leq c} \phi_i$ denotes the code of the disjunction of all $\phi_i$-s with parentheses grouped to the left (for the sake of determinateness only --- provably in $\CT_0$ the truth of a sentence does not depend on how we parenthesize blocks of disjunctions and conjunctions).
\end{definicja} 

\begin{lemat}[Disjunctive correctness] \label{disjunctive}
	$\CT_0$ proves that for all $x$ and all indexed families of sentences\footnote{Note that the quantification over indexed families of sentences is expressed here with an arithmetical formula ''for all $y$, if $y$ is a (code of a) sequence of arithmetical sentences of length $x$''. In particular, such a sequence will always have, according to $M$, the number of its elements equal to some $x \in M$. Therefore, from the point of view of $M$ it will be finite (but not necessarily equal to some $k \in \omega$).} $(\phi_i)_{i \leq x}$ 
	
	\begin{equation}\label{DC}\tag{DC}
	T\biggl(\bigvee_{i \leq x} \phi_i\biggr) \equiv \exists i \leq x \  \ T (\phi_i). 	
	\end{equation}			
	
\end{lemat}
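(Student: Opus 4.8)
We need to prove, in $\CT_0$, that for any (possibly nonstandard) coded sequence of sentences $(\phi_i)_{i \leq x}$, the truth of the left-grouped disjunction $\bigvee_{i \leq x} \phi_i$ is equivalent to the existence of some $i \leq x$ with $T(\phi_i)$. The key point is that $x$ can be nonstandard, so we cannot just unfold the disjunction finitely many times using the compositional axiom for $\vee$ — we genuinely need induction, and the only induction available is $\Delta_0$-induction for the truth predicate.

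Let me think about how to structure this.

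**The natural approach: induction on $x$.** The disjunction is grouped to the left, meaning $\bigvee_{i \leq x} \phi_i = \left(\bigvee_{i \leq x-1} \phi_i\right) \vee \phi_x$. So the compositional axiom (clause 2, with $\otimes = \vee$) gives immediately
$$T\left(\bigvee_{i \leq x} \phi_i\right) \equiv T\left(\bigvee_{i \leq x-1}\phi_i\right) \vee T(\phi_x).$$
This is the inductive step — one application of the compositional axiom peels off the last disjunct. The base case $x=0$ is just $T(\phi_0) \equiv \exists i \leq 0\, T(\phi_i)$, trivially. So the whole thing looks like a routine induction.

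**The crucial obstacle: the induction must be $\Delta_0$.** The formula we're inducting on is
$$\theta(x) \;:\equiv\; \left[\,T\left(\bigvee_{i\leq x}\phi_i\right) \equiv \exists i \leq x\, T(\phi_i)\,\right].$$
For $\CT_0$ to prove this by induction, $\theta(x)$ must be (equivalent to) a $\Delta_0$ formula in the language with $T$. The subformula $T\left(\bigvee_{i\leq x}\phi_i\right)$ is fine — it's a single instance of $T$ applied to a term (the function $x \mapsto \bigvee_{i\leq x}\phi_i$ being coded, with the sequence as a parameter). But $\exists i \leq x\, T(\phi_i)$ is bounded, so $\theta(x)$ is indeed $\Delta_0$ in $T$ with the sequence as parameter. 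This is exactly the point where $\CT_0$ (rather than $\CT^-$) is needed, and I expect this verification — that $\theta$ is genuinely $\Delta_0$ — to be the main technical content worth spelling out.

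So my plan, in order:

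\begin{enumerate}
\item State the induction formula $\theta(x)$ and verify carefully that it is $\Delta_0$ in the language with the truth predicate (with the coded sequence $(\phi_i)_{i\leq x}$ as a parameter), so that $\CT_0$ licenses induction on $x$.
\item Base case: for $x=0$, $\bigvee_{i\leq 0}\phi_i = \phi_0$, so both sides reduce to $T(\phi_0)$.
\item Inductive step: assuming $\theta(x)$, use the left-grouping convention together with the compositional axiom for $\vee$ to get $T\left(\bigvee_{i\leq x+1}\phi_i\right) \equiv T\left(\bigvee_{i\leq x}\phi_i\right) \vee T(\phi_{x+1})$; combine with the inductive hypothesis and the elementary fact that $(\exists i\leq x\,T(\phi_i)) \vee T(\phi_{x+1}) \equiv \exists i \leq x+1\, T(\phi_i)$ to conclude $\theta(x+1)$.
\end{enumerate}

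**Where the real care is needed.** Two subtleties deserve attention. First, the inductive step secretly requires that $\bigvee_{i\leq x}\phi_i$ really is a \emph{subformula} of $\bigvee_{i\leq x+1}\phi_i$ obtained by stripping the last disjunct; this is guaranteed by the left-grouping convention, but one should note that the compositional axiom applies to the \emph{actual} coded disjunction, and that $\CT^-$ only gives compositionality when the argument is genuinely of the form $\alpha \vee \beta$. Since $x+1 \geq 1$, the sentence $\bigvee_{i\leq x+1}\phi_i$ provably has $\vee$ as its main connective, so the axiom applies — I would make this explicit. Second, and more importantly, I expect the genuine obstacle to be confirming that the length-$x$ sequence and the coded disjunction behave well uniformly: within $M$ the sequence is ``finite'' but of possibly nonstandard length, and one must check in $\PA$ that the map assigning to each $x$ the code $\bigvee_{i\leq x}\phi_i$ is a well-defined (primitive recursive) function and that its value at $x+1$ is provably the disjunction of its value at $x$ with $\phi_{x+1}$. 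This is the formalization detail that makes the one-line compositional unfolding legitimate at nonstandard stages, and it is the part I would be most careful to get right rather than wave through.
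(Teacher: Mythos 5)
Your proposal is correct and takes essentially the same approach as the paper's own proof: both establish (DC) by a $\Delta_0$-induction whose step peels off the last disjunct of the left-grouped disjunction via the compositional axiom for $\vee$, with the $\Delta_0$-ness of the induction formula (resting on coding conventions for sub-disjunctions) being the point where $\CT_0$ rather than $\CT^-$ is needed. The only difference is bookkeeping: you fix the family as a parameter and induct on the index of its initial segment, whereas the paper inducts on a bound $x$ and asserts the claim for all families coded below $x$, applying the induction hypothesis to the immediate subfamily --- a variation of no mathematical significance.
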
	

\begin{lemat}[The internal induction] \label{internal}
	$\CT_0$ proves the internal induction axiom \eqref{II}.
\end{lemat}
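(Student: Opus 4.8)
The internal induction axiom is:
$$\forall \phi(v) \ \biggl( \forall x \ \bigl(T(\phi(\num{x})) \rightarrow T(\phi(\num{Sx}))\bigr) \longrightarrow \bigl( T(\phi(\num{0})) \rightarrow \forall x \ T(\phi(\num{x}))\bigr)\biggr)$$

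I need to prove this in CT_0 = CT^- + Δ_0-induction for the truth predicate.

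The key idea: fix φ(v) and assume the "step" hypothesis, i.e., ∀x (T(φ(x̲)) → T(φ(Sx̲))). Also assume T(φ(0̲)). I want to conclude ∀x T(φ(x̲)).

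The natural approach is to apply Δ_0-induction to a formula involving T. The formula "T(φ(x̲))" where φ is a fixed parameter — is this Δ_0? Let me think about what formula to induct on.

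Consider the formula ψ(x) := T(φ(x̲)). We want to show ∀x ψ(x) from ψ(0) and the step. But ψ(x) = T(φ(x̲)) is a formula with one free variable x (and parameter φ), and it contains the truth predicate. For Δ_0-induction to apply, I need this to be Δ_0 in the extended language.

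Is T(φ(x̲)) Δ_0? The term "φ(x̲)" involves the substitution function and numeral function, which are arithmetical (Σ_1 graphs, but provably total/functional). The atomic formula T(y) with y bounded... The subtlety is whether "φ(x̲)" is bounded by x. Actually the code of φ(x̲) grows with x (the numeral x̲ has length ~x). So T(φ(x̲)) applies T to an argument that is NOT bounded by x.

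So the plan: Let me reconsider. I think the right formula is something like: let θ(x,φ) be the Δ_0 formula asserting T(φ(x̲)). I need to verify this is Δ_0. The truth predicate T is atomic, so T(φ(x̲)) = T(y) ∧ "y = φ(x̲)". The graph of the substitution/numeral operation y = φ(x̲) — I need this expressible as Δ_0. Under a reasonable arithmetization (and since CT_0 includes full PA, hence the MRDP-style manipulations), the function x ↦ φ(x̲) has a Δ_0-definable graph in the relevant sense, OR I bound the relevant quantifiers. The point the paper makes (in the sketch of the Π_1-reflection theorem) is that under reasonable coding these operations can be taken Δ_0.

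**Concrete plan:**

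First, I fix φ(v) as a parameter and assume ∀x (T(φ(x̲)) → T(φ(Sx̲))) and T(φ(0̲)).

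Second, I consider the formula η(x) := T(φ(x̲)), viewed as a formula in the language with T, with x free and φ a parameter. I argue η(x) is (equivalent to) a Δ_0 formula: the graph of the numeral-substitution function y = φ(x̲) is arithmetically definable, and I can bound the existential witness y by a suitable term in x (since the code of φ(x̲) is primitive-recursively bounded in x and the fixed code of φ). Thus η(x) ≡ ∃ y ≤ β(x,φ) ("y = φ(x̲)" ∧ T(y)), which is Δ_0 (the only occurrence of T is atomic, inside a bounded quantifier).

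Third, I apply Δ_0-induction (available in CT_0) to η(x). The base case η(0) is exactly T(φ(0̲)), which I assumed. The induction step η(x) → η(Sx) is exactly T(φ(x̲)) → T(φ(Sx̲)), which follows from the step hypothesis. Δ_0-induction then gives ∀x η(x), i.e., ∀x T(φ(x̲)), as desired.

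Fourth, discharging the assumptions and generalizing over φ(v) yields \eqref{II}.

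**The main obstacle.** The crux is the claim that η(x) = T(φ(x̲)) is genuinely Δ_0. There are two issues: (a) bounding the witness y for the syntactic object φ(x̲) by a term in x, which requires that the coding of numerals and substitution is sufficiently tame (length of numeral x̲ is linear in x, so the code of φ(x̲) is bounded by a fixed primitive-recursive function of x and φ that has a Δ_0 bound available in PA); and (b) ensuring the defining formula "y = φ(x̲)" is itself Δ_0, not merely Σ_1. This is where I would lean on the remarks in the paper about reasonable coding of syntax, and on PA proving the totality and functionality of these primitive-recursive operations with appropriate bounds, so that the Σ_1 graph can be replaced by a Δ_0 one relative to a bounding term. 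I would make explicit that CT_0 carries full arithmetical induction, so I have all of PA available to manipulate and bound these syntactic functions; only the occurrence of T must stay within the Δ_0-complexity budget, and since T appears only atomically under a single bounded quantifier, the complexity of η is Δ_0 in the extended language. Once this coding point is settled, the induction is completely routine.
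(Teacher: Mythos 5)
Your overall strategy --- applying $\Delta_0$-induction in the language with $T$ directly to $\eta(x) := T(\phi(\num{x}))$ --- is precisely the one the paper has in mind; the paper never writes this proof out, saying only that it is ''carried out by a completely straightforward application of $\Delta_0$-induction'' and giving disjunctive correctness as its worked example instead. You also correctly isolate the only delicate point, namely whether $\eta(x)$ is $\Delta_0$. But your resolution of that point contains a genuine error. You claim that $\eta(x)$ is equivalent to $\exists y \leq \beta(x,\phi) \ \bigl( y = \phi(\num{x}) \wedge T(y)\bigr)$ for a suitable \emph{term} $\beta(x,\phi)$. No such term exists: by Convention \ref{konw: glupia konwencja} numerals are unary, i.e. $\num{x}=\qcr{SS\ldots S(0)}$ with $S$ repeated $x$ times, so under any natural G\"odel coding the code of $\phi(\num{x})$ grows exponentially in $x$, whereas every term of the arithmetical language (built from $0,S,+,\cdot$) is bounded by a polynomial. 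A primitive recursive bound is not a term bound, and your inference from ''the length of $\num{x}$ is linear in $x$'' conflates the length of a string with its code, which is exponential in the length. So the displayed formula is not $\Delta_0$, and the induction scheme of $\CT_0$ does not apply to it as written.

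The missing idea is relativization to a parameter. Fix $\phi$ and an arbitrary $a$; it suffices to derive $T(\phi(\num{a}))$. Since $\CT_0$ contains all of $\PA$, the map $z \mapsto \phi(\num{z})$ is provably total, so by $\Sigma_1$-collection there is a single element $c$ such that for every $z \leq a$ both the code of $\phi(\num{z})$ and the witnesses to the $\Sigma_1$ graph of the numeral--substitution function lie below $c$. Now apply $\Delta_0$-induction to
\begin{displaymath}
\theta(z) := z > a \ \vee \ \exists y \leq c \ \bigl( y = \phi(\num{z}) \wedge T(y) \bigr),
\end{displaymath}
which \emph{is} $\Delta_0$ with parameters $\phi, a, c$: bounding quantifiers by a parameter is legitimate (bounding them by a nonexistent term is not), and the quantifiers hidden in the graph formula ''$y = \phi(\num{z})$'' can likewise be bounded by $c$. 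Your base case and induction step then go through verbatim, $\theta(a)$ gives $T(\phi(\num{a}))$, and since $a$ was arbitrary, \eqref{II} follows. The essential difference from your version is that the induction runs only up to the fixed $a$, separately for each $a$, with the code bound supplied as a parameter rather than a term. (Alternatively one can pass to a definitional extension in which numeral formation and substitution are primitive function symbols --- as the paper does in its Section 4 for related reasons --- making $\eta(x)$ literally atomic and hence $\Delta_0$; but in the defined-symbol setting you chose, the relativized induction is exactly what is missing.)
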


Proofs of both lemmata are carried out by a completely straightforward application of $\Delta_0$-induction. Let us show as an example the proof of disjunctive correctness.

\begin{proof}[Proof of Lemma \ref{disjunctive}]
	We show by induction on $x$ that the disjunctive correctness holds for all indexed families of sentences $a <x.$ In particular we will assume that under our coding, every subfamily of $a$ is also smaller than $x.$ We can assume that by convention a disjunction over empty sequence is some fixed \textit{falsum}, say $0 \neq 0$ and a disjunction $\bigvee_{i \leq 0} \phi_i$ over a sequence of length one is simply $\phi_0.$  
	
	Assuming the above conventions, the claim is trivial, when $a$ is the empty sequence or when $a$ has exactly one element. Suppose now that $a= \bigvee_{i \leq l} \phi_i$ and that $a<x+1.$ If $a<x$, then our claim holds by the induction hypothesis, so we may assume that $a=x.$ We will focus on the left-to-right direction in our equivalence. Let
	
	\begin{displaymath}
	a = \bigvee_{i \leq l} \phi_i.
	\end{displaymath} 
	
	Without loss of generality we may assume that $l>0.$ Then by definition we have:
	
	\begin{displaymath}
	T \Bigl( \bigvee_{i \leq l} \phi_i \Bigr) \equiv T \Bigl( (\bigvee_{i \leq l -1} \phi_i) \vee \phi_{l} \Bigr).
	\end{displaymath}
	
	By compositionality this implies:
	
	\begin{displaymath}
	T \Bigl( \bigvee_{i \leq l} \phi_i \Bigr) \equiv T \Bigl( \bigvee_{i \leq l -1} \phi_i \Bigr) \vee T(\phi_{l}).
	\end{displaymath}
	
	Which by induction hypothesis is equivalent to:
	
	\begin{displaymath}
	\Bigl( \exists i \leq (l-1) T(\phi_i) \Bigr) \vee T(\phi_l).
	\end{displaymath}
	
	The claim follows. The right-to-left direction is proved in a similar fashion.

\end{proof}

Before we proceed to the proof of our theorem, let us discuss a few preparatory steps. 

Let us introduce our main technical tool --- a particular class of partial truth predicates. Let $A_n$ be the set of arithmetical sentences whose
logical complexity is exactly $n$, where by the logical complexity we mean the maximal depth of nesting of logical symbols, i.e. quantifiers and connectives and each symbol is counted separately (e.g. prefixing a formula with a block of five universal quantifiers, raises its complexity by five). The binary relation $x \in A_y$ is clearly primitive recursive and thus represented in $\PA$. Its precise definition is as follows:
\begin{eqnarray*}
	x \in A_0 & \equiv & \exists s,t \ \ x= (s=t) \\
	x \in A_{y+1} & \equiv & \exists v, \phi(v) \ \ x = (\exists v \ \phi(v)) \wedge (\phi(0)) \in A_y \\
	& \vee & \exists v, \phi(v) \ \ x = (\forall v \ \phi(v)) \wedge   (\phi(0)) \in A_y \\
	& \vee & \exists \phi, \psi \ \ x = (\phi \vee \psi) \wedge \bigvee_{v,w: \max(v,w) = y}  \Bigl( \phi \in A_v \wedge \psi \in A_w \Bigr) \\
	& \vee & \exists \phi, \psi \ \ x = (\phi \wedge \psi) \wedge \bigvee_{v,w: \max(v,w) = y} \Bigl( \phi \in A_v \wedge \psi \in A_w \Bigr)\\
	& \vee & \exists \phi \ \ x =(\neg \phi) \wedge (\phi \in A_y). 
\end{eqnarray*}

Let us define a family of arithmetical predicates  $(\Theta_n)_{n\in\omega}$ in the following way:
\begin{eqnarray*}
	\Theta_0(x) & = & \exists s,t \ \ x = (s=t) \wedge s^{\circ} = t^{\circ}\\
	\Theta_{n+1}(x) & = & \exists v, \phi(v) \ \  x = (\exists v \ \phi(v)) \wedge \exists y \ \Theta_n(\phi(\num{y})) \\
	& \vee &  \exists v, \phi(v) \ \ x = (\forall v \ \phi(v)) \wedge \forall y \ \Theta_n(\phi(\num{y})) \\
	& \vee & \exists \phi, \psi \ \ x = (\phi \vee \psi) \wedge \bigvee_{k,l \leq n} \Bigl( \phi \in A_k \wedge \psi \in A_l \wedge (\Theta_k(\phi) \vee \Theta_l(\psi)) \Bigr) \\
	& \vee & \exists \phi, \psi \ \ x = (\phi \wedge \psi) \wedge \bigvee_{k,l \leq n} \Bigl( \phi \in A_k \wedge \psi \in A_l \wedge (\Theta_k(\phi) \wedge \Theta_l(\psi)) \Bigr)\\
	& \vee & \exists \phi \ \ x = (\neg \phi) \wedge \neg \Theta_n(\phi). 	 
\end{eqnarray*} 

Clearly, the functions $n \mapsto \qcr{A_n}$ and $n \mapsto \qcr{\Theta_n}$ are primitive recursive. Following our conventions we will write $A_x $ and $\Theta_x$ for the arithmetical formulae representing these functions as well as for their values. We will sometimes write $a \in A_c$ meaning that an element $a$ satisfies the formula $A_c(x)$ (where $c$ is a parameter, possibly nonstandard). Note, that  $A_c(x)$ may indeed be expressed with an arithmetical formula with a parameter.

Let a simplistic partial arithmetical truth predicate $T_n(x)$ be defined in the following way: 

\begin{displaymath}
T_n(x) = \bigvee_{j \leq n} x \in A_j \wedge \Theta_j(x).
\end{displaymath}

As before, we shall write simply $T_x(y)$ to denote the function $x,y \mapsto \qcr{T_x(y)}.$ Note that the definition of the predicates $T_n$ closely parallels that of the arithmetical satisfaction predicates for $\Sigma_n$-classes, only it is much simpler. Namely: we assume that \emph{every single} quantifier or connective increases the complexity of a formula and do not make a distinction between bounded and unbounded quantifiers.    

The key fact needed in the proof of our theorem is that if $T(x)$ satisfies the axioms of $\CT_0$, then partial truth predicates  defined for a parameter $c$ as 
\begin{displaymath}
T'_c(x) = T(T_c(\num{x})),
\end{displaymath}

or, in more detail, as
\begin{displaymath}
T'_c(x) = \exists y,z,w \ \Bigl( y = \num{x} \wedge z= \qcr{T_c(v)} \wedge w = \Subst(z,y) \wedge T(w) \Bigr) 
\end{displaymath}
enjoy remarkably good properties: they are compositional for formulae in the respective  classes $A_c$ and they are fully inductive.

\begin{lemat}[Compositionality of $T'_c$] \label{kompozycyjnosc ttc}
	$\CT_0$ proves that for every $y$ the following conditions hold:
	\begin{enumerate}
		\item $\forall s,t \ \ T'_y(s=t) \equiv s^{\circ}=t^{\circ}.$
		\item $\forall \phi,\psi   \ \ \biggl((\phi \otimes \psi) \in A_z \wedge y>z \rightarrow \Bigl(  T'_y(\phi \otimes \psi) \equiv T'_y \phi \otimes T'_y \psi \Bigr)\biggr).$
		\item $\forall \phi  \ \ \biggl((\neg \phi) \in A_z\wedge y>z \rightarrow \Bigl(T'_y(\neg \phi) \equiv \neg T'_y \phi \Bigr)\biggr).$
		\item $\forall v,\phi \ \ \biggl(( Q v \ \phi) \in A_z \wedge y>z \rightarrow \Bigl(T'_y(Q v \ \phi) \equiv Q x \ T'_y(\phi(\num{x})) \Bigr)\biggr).$ 	
	\end{enumerate}				
	
	Where $\otimes \in \{ \wedge, \vee \}$ and $Q \in \{\forall, \exists \}.$
\end{lemat}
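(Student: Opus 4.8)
The plan is to compute $T'_y$ on each shape of sentence by unwinding the arithmetical definitions of $T_y$ and of the $\Theta$'s, and pushing the external predicate $T$ inwards. Three ingredients do all the work: the compositional axioms of $\CT^-$; disjunctive correctness (Lemma~\ref{disjunctive}), which lets me pass $T$ through the \emph{nonstandard} bounded disjunctions occurring both in $T_y$ and in the Boolean clauses of $\Theta$ in a single step; and a disquotation principle for standard formulae. By the latter I mean the auxiliary fact, which I would establish first, that for every \emph{fixed} (standard) arithmetical formula $\alpha(x_1,\dots,x_k)$ the theory $\CT^-$ proves $\forall a_1,\dots,a_k\,\bigl(T(\alpha(\num{a_1},\dots,\num{a_k}))\equiv\alpha(a_1,\dots,a_k)\bigr)$. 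This follows by external induction on the build-up of the standard formula $\alpha$, using the atomic and connective axioms together with the numeral form of the quantifier axiom; crucially, at the quantifier step the induction hypothesis is already applied to numeral instances, so no induction inside the truth predicate is required. I would invoke it only for the handful of standard syntactic relations that appear below (``$x$ is the equation $s=t$'', ``$x$ is the conjunction of $\phi$ and $\psi$'', ``$x\in A_k$'', ``$s^{\circ}=t^{\circ}$'').

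The first step proper is the reduction: for any sentence $a$ of complexity $z$ with $z\le y$ one has $T'_y(a)\equiv T(\Theta_z(\num a))$. Indeed $T'_y(a)=T(T_y(\num a))$ and $T_y(\num a)=\bigvee_{j\le y}(\num a\in A_j\wedge\Theta_j(\num a))$, so disjunctive correctness and the conjunction axiom give $T'_y(a)\equiv\exists j\le y\,\bigl(T(\num a\in A_j)\wedge T(\Theta_j(\num a))\bigr)$; disquotation turns $T(\num a\in A_j)$ into the genuine $a\in A_j$, and since $\PA$ proves that a sentence lies in a unique $A_j$ the disjunction collapses to $j=z$. Granting this, the atomic and Boolean clauses of the Lemma are routine. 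For $s=t$, the sentence $\Theta_0(\num{s=t})$ collapses under disquotation to $T$ of ``$s^{\circ}=t^{\circ}$'', which disquotes to $s^{\circ}=t^{\circ}$. For $a=\phi\otimes\psi$ with $a\in A_z$ and $z<y$, the four non-matching disjuncts of $\Theta_z$ are refuted by disquotation (each asserts that $a$ has some other syntactic shape), while the matching disjunct, after $T$ is pushed through its standard prefix $\exists\phi',\psi'$ and the witnesses are collapsed to $\phi'=\phi,\psi'=\psi$, becomes $T\bigl(\bigvee_{k,l<z}(\num\phi\in A_k\wedge\num\psi\in A_l\wedge(\Theta_k(\num\phi)\otimes\Theta_l(\num\psi)))\bigr)$. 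Disjunctive correctness removes this nonstandard disjunction, the remaining conjunctions are handled by compositionality and disquotation, and the indices collapse to the unique $k,l$ with $\phi\in A_k,\psi\in A_l$; comparing with the reduction applied to $\phi$ and $\psi$ (legitimate since $k,l<z\le y$) this is exactly $T'_y\phi\otimes T'_y\psi$. Negation is entirely analogous.

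The quantifier clause is where I expect the real difficulty, and it is the point at which the naive commutativity principle \eqref{GC} breaks down. Unfolding $\Theta_z$ for $a=Qv\,\phi$ (with $z=m+1$ and $\phi\in A_m$, $m<y$) and collapsing the standard prefix yields $T\bigl(Qw\,\Theta_m(\phi(\num w))\bigr)$, and the numeral quantifier axiom rewrites this as $Qx\,T\bigl(\Theta_m(\phi(\num w))[\num x/w]\bigr)$. The obstacle is that substituting the numeral $\num x$ for the bound variable $w$ does \emph{not} yield $\Theta_m$ applied to the numeral of the code of $\phi(\num x)$, but rather $\Theta_m$ applied to the \emph{compound} closed term $\Subst(\qcr{\phi},\num w)[\num x/w]$, whose value is that code but which is not syntactically a numeral. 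Because $\CT^-$ grants no extensionality, $T(\Theta_m(t))$ cannot be identified outright with $T(\Theta_m(\num{t^{\circ}}))$, so to match the target $Qx\,T'_y(\phi(\num x))$ I need $T$ to be insensitive, inside the nonstandard predicate $\Theta_m$, to replacing a term by the numeral of its value. This regularity for the $\Theta$'s is precisely what the corrected commutativity statement (Lemma~\ref{lem_generalised_commutativity}) is designed to supply, and proving it --- as opposed to the bookkeeping of the previous paragraph --- is the heart of the matter; I would therefore discharge the quantifier clause by appealing to it, the required instance being established separately by $\Delta_0$-induction.
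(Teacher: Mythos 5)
Your first two paragraphs reproduce the paper's own proof: the paper likewise combines disjunctive correctness, the compositional axioms of $\CT^-$, and Tarski biconditionals for standard formulae with (possibly nonstandard) numeral parameters, uses disquotation of the standard formula ``$x\in A_y$'' to collapse the disjunction of $T_y$ to the unique index $j$ with $a\in A_j$, and thereby reduces $T'_y(a)$ to $T(\Theta_j(\num{a}))$. The problem is your treatment of the quantifier clause, which is exactly the case the paper works out in detail. The obstacle you describe is an artifact of reading ``$\Theta_m(\phi(\num{y}))$'' as a composition of \emph{function symbols}. Under the paper's conventions (Convention \ref{konw: glupia konwencja}), $\Subst$ and the numeral function are represented by arithmetical formulae and are suppressed relationally, so the quantifier disjunct of $\Theta_{m+1}$ literally reads
$\exists y\,\exists z\,\exists \psi\ \bigl(z=\num{y}\wedge \psi=\Subst(\phi,z)\wedge\Theta_m(\psi)\bigr)$,
with the plain variable $\psi$ occupying the argument slot of $\Theta_m$. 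Pushing $T$ through this standard prefix by the numeral-form quantifier axioms therefore places a genuine numeral $\num{\psi}$ inside $\Theta_m$, and disquotation of the standard conjuncts $z=\num{y}$ and $\psi=\Subst(\phi,z)$ identifies $\psi$ with the code of $\phi(\num{x})$; one obtains $\exists x\ T\bigl(\Theta_m(\num{\phi(\num{x})})\bigr)$ directly, and no extensionality is ever needed. This is precisely the paper's argument, so the quantifier case is no harder than your Boolean case.

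By contrast, the repair you propose does not work. Lemma \ref{lem_generalised_commutativity} says nothing about replacing a closed term by the numeral of its value inside a \emph{nonstandard} formula: it concerns standard formulae over the fresh predicate $P$, produces \emph{some} arithmetical formula $\psi$ equivalent to them under $T$, and is designed (and used in the paper) solely to prove the inductiveness Lemma \ref{lemat_induktywnosc_TTc}, not compositionality. Moreover, the regularity statement you actually need --- for all closed terms $t$, $T(\Theta_m(t))\equiv T(\Theta_m(\num{t^{\circ}}))$ --- has an unbounded universal quantifier over terms, hence is $\Pi_1$ in the language with $T$, and there is no evident way to obtain it by $\Delta_0$-induction; the unavailability of such regularity in $\CT_0$ is the very reason the paper states its quantifier axioms with numerals and later introduces $\CT_0^+$ with a generalized regularity axiom. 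So as written your proof leaves the quantifier clause unestablished; fortunately, the direct computation you already carry out for the Boolean clauses suffices once the definition of $\Theta$ is read relationally, as the paper intends.
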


\begin{proof}
	Let us fix arbitrary $c.$ We will prove that the above conditions hold for $T'_c$ and we will focus on the case of existential quantifier. Let $(\exists v \ \ \phi) \in A_j$ for some $j<c$ and suppose that 
	\begin{displaymath}
	T \Bigl(T_c(\num{\exists v \ \ \phi}) \Bigr).
	\end{displaymath}
	
	Then, by the disjunctive correctness of $T$, we must have $T(\eta)$ for some of the disjuncts $\eta$ in $T_c(\num{\exists v \ \phi}).$ Fix any $y$ and note that, since $(\exists v \ \ \phi) \in A_y$ is an arithmetical formula of standard syntactic structure with (possibly nonstandard) parameters, we have:
	
	\begin{displaymath}
	T\Bigl( (\exists v \  \phi) \in A_{\num{y}} \Bigr) \equiv (y=j).
	\end{displaymath}
	
	Therefore the only possible candidate for our true disjunct $\eta$ is:
	
	\begin{displaymath}
	T\Bigl( (\exists v \  \phi) \in A_{\num{j}} \wedge \Theta_j(\num{\exists v \ \ \phi}) \Bigr).
	\end{displaymath}
	
	This implies that in particular we have:
	
	\begin{displaymath}
	T\Bigl(\Theta_j(\num{\exists v \ \phi})\Bigr).
	\end{displaymath} 
	
	Which entails:
	
	\begin{displaymath}
	T \Bigl( \exists y \ \ \Theta_{j-1} (\num{\phi(\num{y})}) \Bigr).
	\end{displaymath}

	This by definition equals to the following statement:
	
	\begin{displaymath}
	T \Bigl(  \exists y, z, \psi ( \ \ z = \num{y} \wedge \psi = \Subst(\num{\phi(v)}, z) \wedge \Theta_{j-1}(\psi)) \Bigr).
	\end{displaymath}
	
	Since the compositional truth predicate $T$ satisfies Tarski biconditionals for standard sentences with possibly nonstandard numerals, this implies:
	
	\begin{displaymath}
	\exists y, z, \psi \ \Bigl( (z = \num{y}) \wedge (\psi = \Subst(\phi(v), z)) \wedge T(\Theta_{j-1}(\num{\psi})) \Bigr),
	\end{displaymath}
	
	which may be again abbreviated as: 
	
	\begin{displaymath}
	\exists x \ \ T \Bigl( \Theta_{j-1}(\num{\phi(\num{x})}) \Bigr).
	\end{displaymath}
	
	This, again by the disjunctive correctness property, entails:
	
	\begin{displaymath}
	\exists x \ \ T\Bigl( T_c(\num{\phi(\num{x})}) \Bigr).
	\end{displaymath}
	
	Finally, by definition this is simply:
	
	\begin{displaymath}
	\exists x \ \ T'_c (\phi(\num{x}))
	\end{displaymath}
	
	The other cases are analogous.
	
\end{proof}

The next lemma we will use is the key ingredient of our non-conservativeness result. It states that for arbitrary $c$ (possibly nonstandard) the predicates $T'_c(x)$ satisfy the full induction scheme.

\begin{lemat} \label{lemat_induktywnosc_TTc}
	Let $\phi(v)$ be any formula of the arithmetical language expanded with a fresh unary predicate $P(v)$. Then $\CT_0$ proves that for every $c$
	\begin{displaymath}
	\Bigl(\forall x \  \bigl(\phi[T'_c/P](x)  \rightarrow \phi[T'_c/P](Sx) \bigr)\Bigr) \longrightarrow \Bigl(  \phi[T'_c/P] (0) \rightarrow \forall x  \  \phi[T'_c/P](x) \Bigr).
	\end{displaymath}  
\end{lemat}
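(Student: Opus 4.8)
The plan is to prove the induction scheme for $\phi[T'_c/P]$ by reducing it, via the internal induction axiom \eqref{II} (available by Lemma \ref{internal}), to an induction claim \emph{inside} the truth predicate, and then to transfer the claim back out using the compositionality of $T'_c$ established in Lemma \ref{kompozycyjnosc ttc}. Fix an arithmetical formula $\phi(v)$ with the fresh predicate $P$ and fix $c$. The guiding idea, as sketched in the introduction, is the (corrected) generalised commutativity principle: since $\phi$ has \emph{standard} syntactic shape, the truth predicate $T$ can be pushed through the finitely many standard logical connectives of $\phi[T_c/P]$ until it reaches the partial predicate $T_c$, yielding that $T\bigl(\phi[T_c/P](\num{x})\bigr)$ is provably equivalent to $\phi[T'_c/P](x)$, because $T'_c(y) = T(T_c(\num{y}))$ is exactly what results when $T$ meets a $T_c$-atom. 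The subtlety flagged in the excerpt is that this commutation is not literally clause-by-clause correct at quantifiers (one gets $T(T_c(\num{x}))$ rather than $T(T_c(x))$), so I would phrase the needed equivalence in the careful form provided by Lemma \ref{lem_generalised_commutativity}, which is designed precisely to bypass this mismatch.

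First I would apply internal induction \eqref{II} to the arithmetical formula $\theta(v) := \phi[T_c/P](v)$; note $\theta$ is a genuine arithmetical formula (possibly nonstandard, since $c$ may be nonstandard, but of standard logical shape outside the $T_c$-substitutions). Lemma \ref{internal} gives
\begin{displaymath}
\forall x \bigl( T(\theta(\num{x})) \rightarrow T(\theta(\num{Sx})) \bigr) \longrightarrow \Bigl( T(\theta(\num{0})) \rightarrow \forall x\, T(\theta(\num{x})) \Bigr).
\end{displaymath}
The second step is to establish, for this particular standard-shaped $\phi$, the commutativity equivalence
\begin{displaymath}
\forall x \ \Bigl( T\bigl(\phi[T_c/P](\num{x})\bigr) \equiv \phi[T'_c/P](x) \Bigr),
\end{displaymath}
which I would prove by external induction on the (standard) syntactic complexity of $\phi$, i.e. a metatheoretic induction on the structure of $\phi$ that produces, for each fixed $\phi$, a single $\CT_0$-proof. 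The base cases are arithmetical atoms (handled by compositionality of $T$ for equalities) and the atom $P(t)$, which becomes $T_c(t)$ on the left and $T'_c$ on the right, matching by definition of $T'_c$. The connective and quantifier cases use the compositionality clauses of $T$ directly on the outer standard connective, together with the inductive hypothesis applied to the immediate subformulae; here I would invoke Lemma \ref{lem_generalised_commutativity} to handle the quantifier case correctly rather than the naive \eqref{GC}.

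Once this equivalence is in hand, the conclusion is immediate: substitute $\phi[T'_c/P](x)$ for $T(\theta(\num{x}))$ everywhere in the internal-induction instance above, turning it into exactly the desired scheme
\begin{displaymath}
\Bigl(\forall x\,\bigl(\phi[T'_c/P](x) \rightarrow \phi[T'_c/P](Sx)\bigr)\Bigr) \longrightarrow \Bigl(\phi[T'_c/P](0) \rightarrow \forall x\,\phi[T'_c/P](x)\Bigr).
\end{displaymath}
The main obstacle is the second step, the generalised commutativity equivalence, and specifically the quantifier clause: the outer quantifier of $\phi$ ranges over \emph{all terms} or \emph{all numerals} via the $\CT^-$ axioms, whereas the inductive hypothesis is naturally stated for numerical instances, so the numeral-versus-term discrepancy noted in the excerpt must be reconciled. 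I expect that using the formulation of Lemma \ref{lem_generalised_commutativity} (rather than \eqref{GC}) precisely dissolves this, and that one must also be careful that the external structural induction is legitimate — it is, because $\phi$ is standard and fixed, so only finitely many applications of the compositional axioms are needed and no object-level induction over the nonstandard substituted formula $\phi[T_c/P]$ is required.
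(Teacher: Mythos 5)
Your overall architecture --- an application of internal induction to an arithmetical formula, followed by a $\CT_0$-provable equivalence transferring the result to $\phi[T'_c/P]$ --- is the same as the paper's. But your execution has a genuine gap, and it sits exactly at the point you wave away with ``matching by definition of $T'_c$''. Your step 2 asserts
\begin{displaymath}
\forall x \ \Bigl( T\bigl(\phi[T_c/P](\num{x})\bigr) \equiv \phi[T'_c/P](x) \Bigr),
\end{displaymath}
and in your structural induction the problematic case is \emph{not} the quantifier case (that one does follow from the compositional axioms plus the inductive hypothesis), but the atomic case $P(t)$ for a non-trivial term $t$. There the left-hand side unwinds to $T\bigl(T_c(t(\num{x_1},\ldots,\num{x_n}))\bigr)$, i.e.\ $T_c$ applied to the \emph{term} $t(\num{x_1},\ldots,\num{x_n})$, whereas the right-hand side is $T'_c(t(x_1,\ldots,x_n)) = T\bigl(T_c(\num{t(x_1,\ldots,x_n)})\bigr)$, i.e.\ $T_c$ applied to the \emph{numeral of the value}. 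These are codes of different sentences, and identifying them under $T$ is an instance of regularity (extensionality) for the nonstandard formula $T_c$. Regularity is not an axiom of $\CT_0$, and no lemma available at this stage supplies it: Lemma \ref{lem: regularnosc T prim} comes later, and its proof relies on the very lemma you are proving. So the critical step of your induction is unproved; the numeral-versus-term discrepancy you flag has not been dissolved, only pushed from the quantifiers (where you locate it) into the atoms (where it actually lives).

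Your suggestion to ``invoke Lemma \ref{lem_generalised_commutativity} to handle the quantifier case'' cannot close this gap, because that lemma does not assert your equivalence for $\phi[T_c/P]$: it asserts the existence of \emph{some other} arithmetical formula $\psi$ satisfying $\phi[T'_c/P](\vec{x}) \equiv T(\psi(\num{x_1},\ldots,\num{x_n}))$, and the entire content of its proof is the choice, at an atom $P(t)$, of $\psi = \exists x\,\bigl(x = t(x_1,\ldots,x_n) \wedge T_c(x)\bigr)$: the term is pulled out into an arithmetical equation, so that $T_c$ is only ever applied to a bound variable, which becomes a numeral when $T$ is pushed inside --- no regularity needed. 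Consequently, internal induction must be applied to this $\psi$, not to your $\theta = \phi[T_c/P]$; that is exactly what the paper does. As written, your argument must either (i) keep $\theta$, in which case you owe a proof of regularity of $T$ on the formulas $T_c$ (this may in fact be extractable from disjunctive correctness together with a one-level unfolding of $T_c$ and the $\Theta_j$, but no such argument appears in your proposal), or (ii) use Lemma \ref{lem_generalised_commutativity} as a black box, in which case your step 2 becomes superfluous but your step 1 is applied to the wrong formula. Either way the proof does not go through as it stands; the minimal repair is to replace every atom $P(t)$ of $\phi$ by $\exists z\,(z = t \wedge P(z))$ before commuting $T$, which is precisely the paper's device.
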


Note that in the above lemma by $\phi[T'_c/P](x)$ we mean an actual formula, rather than its formalised version. Thus, in effect, the above lemma states that the formulae $T'_c$ are really inductive for an arbitrary choice of the parameter $c.$ 

As outlined at the beginning of the current section, the above lemma would be very easily proved, if the generalized commutativity principle \eqref{GC} were true, which is unfortunately not the case. We shall bypass the difficulty with the following lemma.\footnote{We are grateful to Cezary Cieśliński for formulating this fact and suggesting it as a way to prove inductiveness of $T'_c$ in a proper manner.}

\begin{lemat} \label{lem_generalised_commutativity}
	Let $\phi(x_1, \ldots, x_n)$ be an arbitrary formula, in the arithmetical language augmented with a fresh predicate $P(x).$ Then $\CT_0$ proves that for every $c$ there exists an arithmetical formula $\psi(x_1, \ldots, x_n)$ such that
	\begin{displaymath}
	\forall x_1, \ldots, x_n \ \biggl( \phi[T_c'/P](x_1, \ldots, x_n) \equiv T(\psi(\num{x_1}, \ldots, \num{x_n})) \biggr).
	\end{displaymath}
\end{lemat}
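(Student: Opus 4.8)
The plan is to prove the statement schematically, by an external induction on the fixed standard syntactic structure of $\phi$, using only the compositional axioms of $\CT^-$ to pull the occurrences of $T$ hidden inside $T'_c = T(T_c(\num{\cdot}))$ outward into a single truth predicate in front. I expect the witness to be $\psi := \phi[T_c/P]$, the result of substituting the \emph{arithmetical} partial truth predicate $T_c$ (rather than $T'_c$) for $P$; this is manifestly arithmetical with $c$ as a parameter, and since $\phi$ is standard the map $c \mapsto \qcr{\phi[T_c/P]}$ is primitive recursive, so one definable function witnesses ``there exists $\psi$'' uniformly for all $c$.

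Before the induction I would reduce to the case in which $P$ occurs only with variable arguments, rewriting each atom $P(t)$ as $\exists w\,(w = t \wedge P(w))$ with $w$ fresh. This is a pure first-order equivalence preserved under substituting any formula for $P$ (the one-point rule), so the rewritten $\phi'[T'_c/P]$ is provably equivalent to $\phi[T'_c/P]$. The point of the reduction is that it pushes all evaluation of compound terms into arithmetical atoms, where the atomic axiom $T(s=t) \equiv s^\circ = t^\circ$ performs the evaluation for us; this is exactly the move that avoids the ``numeral of the value versus term filled with numerals'' mismatch, which would otherwise force the regularity principle \eqref{regularity}, unavailable in $\CT_0$.

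For the induction, at an arithmetical atom I take $\psi$ to be that atom and use the atomic axiom together with the $\PA$-provable fact that evaluating a term on the numerals $\num{x_i}$ returns its value, to obtain the equivalence with a front $T$. At an atom $P(w)$ with $w$ a variable, the required equivalence $T'_c(w) \equiv T(T_c(\num w))$ is simply the definition of $T'_c$, so $\psi = T_c(w)$. The connective cases merge the two inductively obtained front-$T$'s into one by reading axioms~2 and~3 as $T\alpha \otimes T\beta \equiv T(\alpha \otimes \beta)$ and $\neg T\alpha \equiv T(\neg\alpha)$. The quantifier case is the essential one: from $\chi[T'_c/P](w,\bar x) \equiv T(\psi_\chi(\num w, \num{\bar x}))$ the compositional quantifier axiom~4 yields $\exists w\, T(\psi_\chi(\num w, \num{\bar x})) \equiv T\bigl(\exists w\, \psi_\chi(w, \num{\bar x})\bigr)$, so I take $\psi = \exists w\, \psi_\chi$ (and dually for $\forall$). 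It is precisely because the statement carries numerals $\num{x_i}$ on the right that axiom~4 --- which is itself phrased with numerals --- applies verbatim; this is the reformulation that stands in for the false naive principle \eqref{GC}.

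The work here is bookkeeping rather than a deep obstacle: everything under $T$ must be kept in numeral-substituted form so that the quantifier axiom matches, and the preprocessing step must guarantee that a compound term is never substituted directly into $T_c$. I would also point out that, since $\phi$ is standard and the induction runs in the metatheory through finitely many applications of the compositional axioms, each instance of the lemma is already provable in $\CT^-$; the $\Delta_0$-induction of $\CT_0$ is not needed for this particular lemma.
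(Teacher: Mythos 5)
Your proposal is correct and takes essentially the same route as the paper: the same meta-induction on the standard structure of $\phi$, the same witness, and the same use of the numeral-form compositional axioms at each step; your preprocessing of atoms $P(t)$ into $\exists w\,(w = t \wedge P(w))$ is exactly what the paper does inline when it handles the case $\phi = P(t(x_1,\ldots,x_n))$ by taking $\psi = \exists x\,\bigl(x = t(x_1,\ldots,x_n) \wedge T_c(x)\bigr)$. Your closing observation that each instance of the lemma already follows from $\CT^-$ over $\PA$, without any $\Delta_0$-induction for the truth predicate, is also correct, since the paper's argument likewise invokes only the compositional clauses.
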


\begin{proof}
	
	We proceed by (meta-)induction on the complexity of a formula $\phi.$ If $\phi$ is an arithmetical atomic formula $s(x_1, \ldots, x_n) = t(x_1, \ldots, x_n)$ then:
	
	\begin{displaymath}
	\phi[T_c'/P](x_1, \ldots, x_n)  = \phi(x_1, \ldots, x_n) \equiv T(\phi(\num{x_1}, \ldots, \num{x_n})).
	\end{displaymath}
	
	If $\phi(x) = P(t(x_1, \ldots, x_n))$, then:
	
	\begin{displaymath}
	\phi[T_c'/P](x_1, \ldots, x_n) = T'_c(t(x_1, \ldots, x_n)).
	\end{displaymath}
	
	By definition the last formula may be expanded to:
	
	\begin{displaymath}
	\exists x,y,z \ \ \Bigl( x = t(x_1, \ldots, x_n) \wedge y= \num{x} \wedge z = \Subst(T_c,y) \wedge T(z) \Bigr).
	\end{displaymath}
	
	Let 
	
	\begin{displaymath}
	\psi(x_1, \ldots, x_n) = \exists x \ \Big( x= t(x_1, \ldots, x_n) \wedge T_c(x) \Big).
	\end{displaymath}
	
	Now, by compositional clauses in $\CT^-$ we have the following equivalences:

	\begin{eqnarray*}
		T \bigg( \exists x \ \Big( x= t(\num{x_1}, \ldots, \num{x_n}) \wedge T_c(x) \Big) \bigg) & \equiv & \exists x \ T \Big( \num{x}= t(\num{x_1}, \ldots, \num{x_n}) \wedge T_c(\num{x}) \Big) \\
		& \equiv & \exists x \ \Big( \num{x}^{\circ} = t(\num{x_1}, \ldots, \num{x_n})^{\circ} \wedge T(T_c(\num{x}))  \Big) \\
		& \equiv & \exists x \ \Big( x= t(x_1,\ldots, x_n) \wedge T(T_c(\num{x})) \Big).  
	\end{eqnarray*}
	
	Note, that the term $t$ above is standard and, consequently, it may be written down explicitly in the last step. Now, the last formula in the above equivalences is precisely the abbreviation of the following one:
	
	\begin{displaymath}
	\exists x,y,z \ \ \Bigl( x = t(x_1, \ldots, x_n) \wedge y= \num{x} \wedge z = \Subst(T_c,y) \wedge T(z) \Bigr).
	\end{displaymath}
	
	So the claim follows with $\psi$ as above.

	If $\phi$ is a boolean combination of some formulae $\xi, \eta,$ then the induction step is straightforward, so suppose that $\phi = \exists y \ \ \eta(x,y).$ Then we have:
	
	\begin{displaymath}
	\phi[T_c'/P](x_1, \ldots, x_n) = \exists \ y (\eta[T_c'/P](x_1, \ldots, x_n ,y)).
	\end{displaymath}
	
	By induction hypothesis, there exists $\psi'(x_1, \ldots, x_n,y)$ such that
	
	\begin{displaymath}
	\forall x_1, \ldots, x_n, y \ \Bigl(\eta[T'_c/P](x_1, \ldots, x_n, y) \equiv T(\psi'(\num{x_1}, \ldots, \num{x_n}, \num{y}))\Bigr). 
	\end{displaymath}
	
	Using the compositional clauses for $T$, we have:
	
	\begin{displaymath}
	\forall x_1, \ldots, x_n \Bigl(\phi[T_c'/P](x_1, \ldots, x_n) \equiv T(\exists y \psi'(\num{x_1}, \ldots, \num{x_n},y)) \Bigr).
	\end{displaymath}
	
	So, the claim holds with $\psi = \exists y \psi'.$ The universal quantifier case is analogous.

\end{proof}

\begin{proof}[Proof of Lemma \ref{lemat_induktywnosc_TTc}]
	Fix $\phi(v)$ as in the claim of the lemma. Working in $\CT_0$, fix an arbitrary $c.$ By the previous lemma, there exists an arithmetical formula $\psi(x)$ such that
	
	\begin{displaymath}
	\forall x \ \Bigl( \phi[T'_c/P](x) \equiv T\psi(\num{x}) \Bigr).
	\end{displaymath}
	
	By the internal induction principle we have:
	
	\begin{displaymath}
	\Bigl(\forall x \  \bigl(T(\psi(\num{x})) \rightarrow T(\psi(\num{Sx})) \bigr)\Bigr) \longrightarrow \Bigl(  T(\psi(\num{0}) \rightarrow \forall x  \  T(\psi(\num{x})) \Bigr).
	\end{displaymath}  
	
	By lemma \ref{lem_generalised_commutativity} this entails:
	
	\begin{displaymath}
	\Bigl(\forall x \ \bigl(\phi[T'_c/P](x)  \rightarrow \phi[T'_c/P](Sx) \bigr)\Bigr) \longrightarrow \Bigl(  \phi[T'_c/P] (0) \rightarrow \forall x  \  \phi[T'_c/P](x) \Bigr).
	\end{displaymath}
	
	Hence Lemma \ref{lemat_induktywnosc_TTc} follows.  
\end{proof}

Let us  stress that we did not put any restrictions on the complexity of $\phi,$ so that what we obtain for the predicates $T'_c$ is the full induction, rather than $\Delta_0$-induction as one could possibly expect.

Now let us state another important lemma. It states that the predicates of the form $T'_c$ are compatible.

\begin{lemat} \label{lem: zgodnosc i okreslonosc TTc}  Provably in $\CT_0$ the predicates $T'_d$ have the following properties:
\begin{enumerate}
	\item If $x \notin A_c$ for all $c \leq d$, then  we have $\neg T'_d(x).$
	\item 	For arbitrary $d<e$ and $\phi \in A_d$ we have $T'_d(\phi) \equiv T'_e(\phi).$
\end{enumerate} 
\end{lemat}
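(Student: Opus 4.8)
The plan is to reduce the predicate $T'_d$ to a transparent normal form and then read off both clauses from it. Working in $\CT_0$, fix $d$ and an arbitrary $x$. By definition $T'_d(x) = T(T_d(\num{x}))$, where $T_d(\num{x})$ is the disjunction $\bigvee_{j \leq d}\bigl( \num{x} \in A_{\num{j}} \wedge \Theta_j(\num{x})\bigr)$. Since the disjuncts form a coded family of sentences indexed by $j \leq d$, the disjunctive correctness Lemma \ref{disjunctive} gives
\[
T'_d(x) \equiv \exists j \leq d \ T\bigl( \num{x} \in A_{\num{j}} \wedge \Theta_j(\num{x})\bigr),
\]
and the compositional conjunction axiom of $\CT^-$ turns each $T$ of a conjunction into a conjunction of $T$'s. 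Finally, ``$\cdot \in A_{\cdot}$'' is a fixed \emph{standard} arithmetical formula, so --- exactly as in the proof of Lemma \ref{kompozycyjnosc ttc} --- the truth predicate satisfies its Tarski biconditional even for nonstandard numeral arguments, yielding $T(\num{x} \in A_{\num{j}}) \equiv (x \in A_j)$. Combining these, I obtain the normal form
\[
T'_d(x) \equiv \exists j \leq d \ \bigl( (x \in A_j) \wedge T(\Theta_j(\num{x}))\bigr),
\]
provable in $\CT_0$ for every $x$ and every $d$. Both clauses follow from this single equivalence.

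Clause (1) is then immediate: if $x \notin A_c$ for all $c \leq d$, then no index $j \leq d$ satisfies $x \in A_j$, so every disjunct on the right-hand side is false and hence $\neg T'_d(x)$.

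For clause (2) I would use the arithmetical fact, provable in $\PA$ by induction on codes of sentences from the recursive definition of the classes $A_n$, that every sentence has exactly one logical complexity, i.e. the predicates $A_j$ are pairwise disjoint. Granting this, fix $d < e$ and $\phi \in A_d$. In the normal form for $T'_d(\phi)$ the only index $j \leq d$ for which $\phi \in A_j$ can hold is $j = d$; likewise, since $d \leq e$, the only such index $j \leq e$ in the normal form for $T'_e(\phi)$ is again $j = d$. Hence both $T'_d(\phi)$ and $T'_e(\phi)$ collapse to the single conjunct $(\phi \in A_d) \wedge T(\Theta_d(\num{\phi}))$, i.e. to $T(\Theta_d(\num{\phi}))$, and therefore $T'_d(\phi) \equiv T'_e(\phi)$.

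The purely routine computations are the derivation of the normal form and the disjointness of the classes $A_n$; the one step deserving care is the Tarski biconditional $T(\num{x} \in A_{\num{j}}) \equiv (x \in A_j)$ for the membership formula with possibly nonstandard parameters $x$ and $j$. Here I would lean on the fact, already exploited in Lemma \ref{kompozycyjnosc ttc}, that $\CT^-$ proves the uniform Tarski biconditional for each fixed standard-shape arithmetical formula applied to arbitrary numerals; since ``$\cdot \in A_{\cdot}$'' is such a formula, the biconditional is available. I expect this to be the main conceptual (as opposed to merely computational) point, with everything else following from disjunctive correctness and elementary arithmetic.
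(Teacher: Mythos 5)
Your proposal is correct and follows essentially the same route as the paper's own (very terse) proof: both rest on exactly the two ingredients of disjunctive correctness and the Tarski biconditional $\forall x,y \ \bigl(T(\num{x} \in A_{\num{y}}) \equiv (x \in A_y)\bigr)$, which holds because ``$x \in A_y$'' is a standard formula. Your added details --- the explicit normal form and the $\PA$-provable disjointness of the classes $A_j$ used in clause (2) --- are just the spelled-out version of what the paper dismisses as ``straightforward.''
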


\begin{proof}
	Both claims are a  straightforward application of the disjunctive correctness and the fact that the formula ''$x \in A_y$'' is standard, so we have: 
	\begin{displaymath}
	\forall x,y  \Bigl(T(\num{x} \in A_{\num{y}}) \equiv (x \in A_y) \Bigr).
	\end{displaymath}
\end{proof}

Now we introduce one more truth predicate, whose properties will almost immediately imply our theorem. 

\begin{definicja} \label{def_T'}
	Let the formula $T'(x)$ be defined in the following way:
	\begin{displaymath}
	T'(x) = \exists v \  \bigl( (x \in A_v) \wedge TT_v(\num{x}) \bigr).
	\end{displaymath}
\end{definicja}

Intuitively, the extension of the predicate $T'(x)$ is the sum of the extensions of predicates $T'_c(x)$. We should expect that it behaves reasonably,  since by Lemma \ref{lem: zgodnosc i okreslonosc TTc} the predicate $T'_c(\phi)$ is an extension $T'_d(\phi)$ to arithmetical sentences in $A_c \setminus A_d$ whenever $c>d$. Provably in $\CT_0$ the newly introduced predicate $T'(x)$ satisfies  the following four properties:

\begin{enumerate}
	\item $\Delta_0$-induction scheme.
	\item Compositionality.
	\item Regularity.
	\item Global reflection principle.
	\item Axiom soundness property for $\PA.$
\end{enumerate}

Now we will spell out the listed properties in the series of lemmata. 

\begin{lemat}[Bounded induction for $T'$] \label{lem_bounded_induction_T'}
	For any $\Delta_0$ formula $\phi$ in the arithmetical language enriched with the fresh unary predicate $P(x)$, $\CT_0$ proves that:
	\begin{displaymath}
	\forall x \  \Bigl(\phi[T'/P](x) \rightarrow \phi[T'/P](Sx) \Bigr) \longrightarrow \Bigl(  \phi[T'/P] (0) \rightarrow \forall x  \  \phi[T'/P](x) \Bigr).
	\end{displaymath}   
\end{lemat}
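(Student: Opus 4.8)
The plan is to reduce bounded induction for $T'$ to the \emph{full} induction for the predicates $T'_c$ supplied by Lemma~\ref{lemat_induktywnosc_TTc}, exploiting the fact that on small arguments $T'$ coincides with $T'_c$. The starting point is an agreement fact that reads off Lemma~\ref{lem: zgodnosc i okreslonosc TTc}: working in $\CT_0$, for every $c$ and every $m \leq c$ we have $T'(m) \equiv T'_c(m)$. Indeed, if $m$ is an arithmetical sentence, its complexity $d$ satisfies $d \leq m \leq c$ (the logical complexity of a formula is bounded by its code), so $m \in A_d$ with $d \leq c$, and the two clauses of Lemma~\ref{lem: zgodnosc i okreslonosc TTc} give $T'(m) \equiv T'_d(m) \equiv T'_c(m)$; if $m$ is not a sentence then $m \notin A_e$ for every $e \leq c$, so both $T'(m)$ and $T'_c(m)$ are false.

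The crucial use of the $\Delta_0$ hypothesis is the following localization, which I would establish by a routine meta-induction on the (standard) structure of $\phi$. Since $\phi$ is $\Delta_0$, there is an arithmetical term $t(x,\bar p)$, with $\bar p$ the parameters of $\phi$, bounding the value of every argument to which $P$ is applied when $\phi[T'/P](x)$ is evaluated. Applying the agreement fact at each atomic occurrence of $P$ and pushing it through the bounded connectives and bounded quantifiers, $\CT_0$ proves
\begin{displaymath}
\forall x\, \forall c\ \Bigl( t(x,\bar p) \leq c \rightarrow \bigl(\phi[T'/P](x) \equiv \phi[T'_c/P](x)\bigr)\Bigr). \tag{$\dagger$}
\end{displaymath}
As $\phi$ is a fixed standard formula, this is a single $\CT_0$-proof obtained by composing finitely many steps.

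With $(\dagger)$ in hand I would reason in $\CT_0$ as follows. Assume the induction step $\forall x\,(\phi[T'/P](x) \rightarrow \phi[T'/P](Sx))$ and the base case $\phi[T'/P](0)$, fix an arbitrary $a$, and aim to derive $\phi[T'/P](a)$. Choose $c$ large enough that $t(x,\bar p) \leq c$ for all $x \leq a$ (such a bound exists provably in $\PA$). The key trick is to apply the full induction for $T'_c$ (Lemma~\ref{lemat_induktywnosc_TTc}) not to $\phi$ itself but to the relativized formula $\eta(x) := (x \leq a \rightarrow \phi(x))$, for which $\eta[T'_c/P](x)$ is $(x \leq a \rightarrow \phi[T'_c/P](x))$. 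Its base case holds because $\phi[T'_c/P](0) \equiv \phi[T'/P](0)$ by $(\dagger)$, and for the step, assuming $\eta[T'_c/P](x)$ and $Sx \leq a$ we have $x < a$, so $t(x,\bar p), t(Sx,\bar p) \leq c$ and $(\dagger)$ lets us pass from $T'_c$ to $T'$, invoke the global step, and pass back, yielding $\phi[T'_c/P](Sx)$. Lemma~\ref{lemat_induktywnosc_TTc} then gives $\forall x\, \eta[T'_c/P](x)$, hence $\phi[T'_c/P](a)$, and one final application of $(\dagger)$ converts this to $\phi[T'/P](a)$. The main obstacle is precisely the scope mismatch that makes the naive route fail: one cannot transfer the induction step from $T'$ to $T'_c$ for \emph{all} $x$, only for $x \leq a$ where the two predicates are known to agree; relativizing to $x \leq a$ before invoking the full induction for $T'_c$ is exactly what repairs this, and identifying the bounding term $t(x,\bar p)$ needed for $(\dagger)$ is the other point where the $\Delta_0$ assumption is indispensable.
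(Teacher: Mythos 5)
Your proof is correct, but it reaches the conclusion by a different route than the paper. Both arguments rest on the same two pillars: agreement of $T'$ with $T'_c$ on arguments below $c$ (Lemma \ref{lem: zgodnosc i okreslonosc TTc} together with the convention that a formula's complexity is bounded by its code), and full inductiveness of each $T'_c$ (Lemma \ref{lemat_induktywnosc_TTc}). The paper finishes model-theoretically: it quotes the Kossak--Schmerl Fact that $\Delta_0$-induction for a predicate is equivalent to all of its bounded initial segments being coded, and then observes that $\{x<b : T'(x)\}=\{x<b : T'_b(x)\}$ is coded because $T'_b$ is fully inductive. You instead inline what is essentially the proof of the ``coded segments imply $\Delta_0$-induction'' direction of that Fact: your localization $(\dagger)$ via a term bounding the arguments of $P$ is precisely where $\Delta_0$-ness enters that equivalence, and your relativization to $x\leq a$ before applying Lemma \ref{lemat_induktywnosc_TTc} is the correct repair for the scope mismatch you identify (a naive transfer of the induction step from $T'$ to $T'_c$ for all $x$ would indeed fail). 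What each approach buys: the paper's proof is shorter and exposes the structural picture --- $T'$ is a predicate with coded initial segments --- at the cost of citing an external fact and arguing over an arbitrary model; yours is self-contained, purely syntactic, and makes visible exactly where the $\Delta_0$ hypothesis is used. One small point you should make explicit: you apply Lemma \ref{lemat_induktywnosc_TTc} to $\eta(x)=(x\leq a\rightarrow\phi(x))$, which carries the parameters $a$ and $\bar p$, while the lemma is stated for a formula with a single indicated variable; its proof does cover parameters (Lemma \ref{lem_generalised_commutativity} handles several variables, and internal induction applies to formulas containing numeral parameters), and the paper's own argument needs the same extension, but the invocation deserves a sentence of justification.
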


In the proof of the above lemma we will use the following characterisation of $\Delta_0$-induction:\footnote{See \cite{kossakschmerl}, Proposition 1.4.2.}

\begin{fakt}
	Let $A(x)$ be an arbitrary formula in a language $\mathcal{L}$ extending the language of the arithmetic. Then the following conditions are equivalent for an arbitrary $\mathcal{L}$-structure $M$ satisfying $\PA$:
	\begin{enumerate}
		\item $M \models$  $\Delta_0$-induction for the formula $A(x)$.
		\item For every $b$ there exists an element (a coded set) $c$ such that 
		\begin{displaymath}
		M \models \forall x < b \ \ \biggl( A(x) \equiv x \in c\biggr),
		\end{displaymath}
		in which case we say that the set of elements below $b$ satisfying $A(x)$ in $M$ is coded. 
	\end{enumerate}
\end{fakt}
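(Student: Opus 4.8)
The plan is to establish the two implications separately, arguing directly inside $M$. I read condition (1) as the full \emph{scheme} of $\Delta_0$-induction in the expanded language $\mathcal{L}_{\PA}(A):=\mathcal{L}_{\PA}\cup\{A\}$, i.e. induction for every bounded formula in which $A$ may occur; reading it instead as the single induction instance for $A$ itself would make the equivalence false (a predicate failing at $0$ satisfies that one instance vacuously, yet its bounded initial segments need not be coded). Condition (2) I read as piecewise codability of the definable set $A^{M}=\{x:M\models A(x)\}$. Throughout I use freely that $M\models\PA$ supplies ordinary arithmetical $\Delta_0$-induction, the equivalent $\Delta_0$ least-number principle, and totality of exponentiation.

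For the implication $(2)\Rightarrow(1)$, I would fix a $\Delta_0$ formula $\theta(x,\bar p)$ of $\mathcal{L}_{\PA}(A)$, assume $\theta(0,\bar p)$ and $\forall x\,(\theta(x,\bar p)\to\theta(x+1,\bar p))$, fix an arbitrary $b$, and produce $\theta(b,\bar p)$. Since $\theta$ is bounded, there is a single arithmetical term $\beta(b,\bar p)$ bounding every argument at which the predicate $A$ is interrogated while evaluating $\theta(x,\bar p)$ for $x\le b$. By (2) let $c$ code $\{z<\beta : A(z)\}$. Replacing each atomic subformula $A(s)$ of $\theta$ by $s\in c$ yields a purely arithmetical $\Delta_0$ formula $\theta^{*}(x,\bar p,c)$ with
\[ M\models\theta(x,\bar p)\leftrightarrow\theta^{*}(x,\bar p,c)\quad\text{for all } x\le b. \]
Now $\theta^{*}$ is an ordinary arithmetical $\Delta_0$ formula, so the $\Delta_0$ least-number principle from $\PA$ applies to $\{x\le b:\neg\theta^{*}(x,\bar p,c)\}$. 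Using $\theta^{*}(0)$ and transporting the induction-step hypothesis for $\theta$ across the equivalence on $[0,b]$, a least counterexample $m=m'+1$ would yield $\theta^{*}(m')$, hence $\theta(m')$, hence $\theta(m'+1)=\theta(m)$, a contradiction; thus $\theta^{*}(b)$ and therefore $\theta(b)$ hold.

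For the converse $(1)\Rightarrow(2)$, I would fix $b$ and put $N=2^{b}$, which exists because $M\models\PA$. The point is to express ``$c$ is the partial code of $A$ below $y$'' by a genuinely \emph{bounded} $\mathcal{L}_{\PA}(A)$-formula, so that the assumed scheme applies; this is achieved by bounding the code-witness with the parameter $N$. Concretely I would use
\[ \chi(y):=\exists c<N\ \forall x<b\ \Bigl(\bigl(x<y\to(x\in c\leftrightarrow A(x))\bigr)\wedge\bigl(x\ge y\to x\notin c\bigr)\Bigr), \]
a $\Delta_0$ formula of $\mathcal{L}_{\PA}(A)$ with parameters $b,N$. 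One checks $\chi(0)$ (take $c=0$) and $\chi(y)\to\chi(y+1)$ for every $y$: for $y<b$ one extends the witness by the single bit $2^{y}$ exactly when $A(y)$ holds, and since the witness carries no bit at or above $y$ it remains below $2^{y+1}\le N$; for $y\ge b$ the formula collapses to its value at $b$, so the step is trivial. By the $\Delta_0$-induction scheme granted by (1), $M\models\forall y\,\chi(y)$, in particular $\chi(b)$, and any witness $c$ to $\chi(b)$ satisfies $\forall x<b\,(x\in c\leftrightarrow A(x))$, i.e. $c$ codes $\{x<b:A(x)\}$.

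The two genuinely delicate points, on which I expect to spend the most care, are the following. In $(2)\Rightarrow(1)$ it is the faithful replacement of the predicate $A$, inside a bounded formula, by membership in one coded set $c$ valid throughout $[0,b]$: this rests on the uniform bound $\beta$ on all terms fed to $A$ and is exactly what reduces $\Delta_0(A)$-induction to plain arithmetical $\Delta_0$-induction. In $(1)\Rightarrow(2)$ it is forcing the existential ``there is a code'' to be a bounded quantifier; this is where totality of exponentiation enters, via the parameter $N=2^{b}$, and it is the reason the equivalence is naturally stated over $\PA$ rather than over $I\Delta_0$ alone, where bounded sets need not be coded.
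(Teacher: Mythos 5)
Your proof is correct. Note that the paper itself gives no argument for this Fact at all: it is quoted with a footnote pointing to Proposition 1.4.2 of Kossak--Schmerl \cite{kossakschmerl}, so there is no in-paper proof to compare against; what you have written is essentially the standard textbook argument, supplied in full. Two of your choices deserve explicit endorsement. First, your reading of condition (1) as the \emph{scheme} of $\Delta_0$-induction for all bounded formulae of $\mathcal{L}_{\PA}\cup\{A\}$ is the right one: it is exactly how the Fact is used in the proof of Lemma \ref{lem_bounded_induction_T'}, where codability of $\{x<b : T'(x)\}$ is shown and induction is then concluded for \emph{every} $\Delta_0$ formula $\phi[T'/P]$, and your counterexample shows the single-instance reading would be false. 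Second, the two points you single out as delicate are indeed the crux: in $(2)\Rightarrow(1)$ the uniform monotone term bound $\beta$ on all arguments fed to $A$ (available since $\mathcal{L}_{\PA}$-terms are monotone and the quantifiers of $\theta$ are term-bounded), which legitimizes the atomic replacement $A(s)\mapsto s\in c$ throughout $[0,b]$ and reduces everything to arithmetical $\Delta_0$ least-number reasoning; and in $(1)\Rightarrow(2)$ the bounding of the existential code quantifier by the parameter $N=2^b$, which is what makes $\chi(y)$ genuinely bounded and is where totality of exponentiation (harmless over $\PA$) enters. One pedantic point you handle implicitly but correctly: the clause $\forall x<b\,(x\ge y\to x\notin c)$ only controls bits in $[y,b)$, and it is the bound $c<N=2^b$ that kills bits at positions $\ge b$, so the witness in the successor step really satisfies $c<2^{y}$ and $c'<2^{y+1}\le N$ as you claim.
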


\begin{proof}[Proof of Lemma \ref{lem_bounded_induction_T'}]
	
	We will show that for an arbitrary model $M$ of $\CT_0$ and arbitrary element $b \in M$ the set of elements below $b$ satisfying $T'(x)$ is coded in $M$. Then the claim of the lemma will follow by the above Fact.
	
	In any model of $\CT_0$ the extension of the predicate $T'$ is the sum over $a$'s of the extensions of the predicates $T'_a$. Now observe that for arbitrary $b$ and arbitrary $x <b$ the following conditions are equivalent by Lemma \ref{lem: zgodnosc i okreslonosc TTc} (if we assume, as we may, that the complexity of every formula is no greater than its code, i.e. for arbitrary $x$ we have $x \in A_y$ for some $y \leq x$):
	\begin{enumerate}
		\item $T'(x).$
		\item $T'_b(x).$
	\end{enumerate}

	Now, there clearly exists an element $c$ such that:
	\begin{displaymath}
	\forall x < b \ \ \biggl(T'_b(x) \equiv x \in c\biggr),
	\end{displaymath}
	since the predicate $T'_b(x)$ is fully inductive by Lemma \ref{lemat_induktywnosc_TTc}.
\end{proof}

\begin{lemat}[Compositionality of $T'$] \label{lem_compositionality_T'}
	Provably in $\CT_0$ the following conditions hold:
	\begin{enumerate}
		\item $\forall s,t \ \Bigl( T'(s=t) \equiv s^{\circ}=t^{\circ} \Bigr).$
		\item $\forall \phi,\psi   \ \Bigl(  T'(\phi \otimes \psi) \equiv T'\phi \otimes T'\psi \Bigr) .$
		\item $\forall \phi  \ \Bigl( T'(\neg \phi) \equiv \neg T'\phi \Bigr) .$
		\item $\forall v,\phi(v) \ \Bigl( T'(Q v \ \phi(v)) \equiv Q x \ T'(\phi(\num{x})) \Bigr).$ 	
	\end{enumerate}				
	
	Where $\otimes \in \{ \wedge, \vee \}$ and $Q \in \{\forall, \exists \}.$
\end{lemat}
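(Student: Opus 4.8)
The plan is to prove each of the four compositional clauses for $T'$ by reducing them to the corresponding compositional properties of the bounded predicates $T'_c$ established in Lemma \ref{kompozycyjnosc ttc}, using the compatibility result of Lemma \ref{lem: zgodnosc i okreslonosc TTc} to move freely between different parameters. The key observation is that, by the definition $T'(x) = \exists v\,(x \in A_v \wedge TT_v(\num{x}))$ together with Lemma \ref{lem: zgodnosc i okreslonosc TTc}, we have for \emph{any} parameter $c$ large enough that $x \in A_{c'}$ for some $c' \leq c$ the equivalence $T'(x) \equiv T'_c(x)$; in other words $T'$ agrees with $T'_c$ on all sentences whose complexity is below $c$. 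Throughout I would use the standing convention (already invoked in the proof of Lemma \ref{lem_bounded_induction_T'}) that the complexity of a formula never exceeds its code.

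First I would handle the atomic case (1): for a sentence $s=t$, which lies in $A_0$, the definition of $T'$ picks out the disjunct with $v=0$, and $TT_0(\num{s=t})$ unwinds via the definition of $T_0$ and the Tarski biconditional for $T$ to the condition $s^\circ = t^\circ$; this is essentially clause (1) of Lemma \ref{kompozycyjnosc ttc} combined with Lemma \ref{lem: zgodnosc i okreslonosc TTc}(1) to rule out spurious witnesses. For the connective and quantifier clauses (2)--(4), the strategy is uniform. Given, say, $\phi \otimes \psi$, I would fix a parameter $c$ exceeding the complexity of $\phi \otimes \psi$ (hence also exceeding the complexities of the immediate subformulae $\phi$ and $\psi$, since under our complexity measure each connective strictly increases complexity). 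Then $T'(\phi \otimes \psi) \equiv T'_c(\phi \otimes \psi)$, and since $(\phi \otimes \psi) \in A_z$ with $c > z$, clause (2) of Lemma \ref{kompozycyjnosc ttc} gives $T'_c(\phi \otimes \psi) \equiv T'_c\phi \otimes T'_c\psi$. Finally, because the complexities of $\phi$ and $\psi$ are strictly below $c$ as well, Lemma \ref{lem: zgodnosc i okreslonosc TTc}(2) yields $T'_c\phi \equiv T'\phi$ and $T'_c\psi \equiv T'\psi$, completing the chain. The negation clause (3) is identical, and the quantifier clause (4) proceeds the same way: one fixes $c$ above the complexity of $Qv\,\phi(v)$, applies clause (4) of Lemma \ref{kompozycyjnosc ttc} to get $T'_c(Qv\,\phi) \equiv Qx\,T'_c(\phi(\num{x}))$, and then uses compatibility to replace $T'_c$ by $T'$ on each instance $\phi(\num{x})$, noting that all the substitution instances $\phi(\num{x})$ have the same complexity, which is strictly below $c$.

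The one point demanding genuine care — and the main obstacle — is the quantifier case, specifically the step of replacing $T'_c(\phi(\num{x}))$ by $T'(\phi(\num{x}))$ \emph{uniformly in $x$} for an unbounded quantifier $Qx$. For each fixed $x$ the substitution instance $\phi(\num{x})$ is a sentence whose logical complexity equals the complexity of $\phi$ regardless of $x$ (substituting a numeral for a variable does not introduce new logical symbols), and this common complexity is strictly below our chosen $c$; hence Lemma \ref{lem: zgodnosc i okreslonosc TTc}(2) applies to every instance with the \emph{same} parameter $c$, so the equivalence $T'_c(\phi(\num{x})) \equiv T'(\phi(\num{x}))$ holds for all $x$ simultaneously and may be quantified over. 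I would make explicit that $c$ can be chosen to dominate the complexity of the whole sentence $Qv\,\phi(v)$ in advance, so that no dependence on the running witness $x$ arises. Once this uniformity is secured, the equivalence $T'(Qv\,\phi(v)) \equiv Qx\,T'(\phi(\num{x}))$ follows by transitivity of the three equivalences, and the proof is complete.
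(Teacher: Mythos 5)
Your proof is correct and takes essentially the same route as the paper: the paper's own proof is a one-line observation that the lemma follows immediately from the definition of $T'(x)$, Lemma \ref{lem: zgodnosc i okreslonosc TTc} and Lemma \ref{kompozycyjnosc ttc}, which is precisely the reduction you spell out. Your explicit check that the parameter $c$ can be chosen uniformly in the quantifier clause (since substituting a numeral does not change logical complexity) is exactly the detail the paper leaves implicit.
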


\begin{proof}
	The lemma follows immediately from the definition of $T'(x)$, Lemma \ref{lem: zgodnosc i okreslonosc TTc} and Lemma \ref{kompozycyjnosc ttc}.
\end{proof}

The fact that the predicate $T'(x)$ may be presented as a sum of the predicates $T'_c$, which are fully inductive and compositional for formulae of logical complexity no greater than $c$ guarantees that $T'$ enjoys all sorts of good properties. The next lemma states that it is fully extensional.

\begin{lemat}[Regularity of $T'$] \label{lem: regularnosc T prim}
	The predicate $T'$ satisfies the regularity axiom, i.e.
	\begin{displaymath}
		\forall \phi \forall t,s\ \ \bigl(  s^{\circ} = t^{\circ} \rightarrow  T'\phi(t)\equiv T'\phi(s)\bigr). 
	\end{displaymath}
	The same hold for predicates $T'_b$ for arbitrary $b$ 
\end{lemat}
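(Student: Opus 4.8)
The plan is to reduce regularity for $T'$ and for each $T'_b$ to a single statement about the partial predicates $T'_c$, proved by internal induction on syntactic complexity. The engine is the combination of compositionality of $T'_c$ for formulae of complexity below $c$ (Lemma \ref{kompozycyjnosc ttc}) with the \emph{full} inductiveness of $T'_c$ (Lemma \ref{lemat_induktywnosc_TTc}), which together let us mimic the classical argument that an inductive compositional truth predicate is extensional.

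First I would prove the core claim: working in $\CT_0$, for every fixed $c$ and every formula $\chi(v_1,\ldots,v_k)$ of complexity $<c$, and all tuples of closed terms $\bar s,\bar t$ with $s_i^{\circ}=t_i^{\circ}$ for each $i$, one has $T'_c(\chi(\bar t)) \equiv T'_c(\chi(\bar s))$. I state this for formulae with arbitrarily many free variables and simultaneous substitution, since the quantifier step unavoidably produces subformulae with an extra free variable. The claim is proved by induction on the complexity $n<c$ of $\chi$; as the statement has the shape $\Xi[T'_c/P](n)$ (the predicate $T'_c$ occurring only applied to terms), Lemma \ref{lemat_induktywnosc_TTc} guarantees that this induction is available in $\CT_0$. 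In the base case $\chi$ is atomic, say $p(\bar v)=q(\bar v)$, and atomic compositionality (Lemma \ref{kompozycyjnosc ttc}, clause~1) reduces the claim to the purely arithmetical fact, provable in $\PA$ by induction on the term, that the value $(p(\bar t))^{\circ}$ depends only on the values $t_i^{\circ}$. In the inductive step the complexity of $\chi$ is still $<c$, so compositionality of $T'_c$ applies to $\chi$ itself: for the Boolean connectives and negation the claim follows at once from the induction hypothesis applied to the immediate subformulae (of strictly smaller complexity), and for $\chi = Qw\ \eta(\bar v,w)$ we push $T'_c$ through the quantifier to obtain $Qx\ T'_c(\eta(\bar t,\num{x}))$ and apply the induction hypothesis to $\eta$, substituting the \emph{same} numeral $\num{x}$ for $w$ on both sides and $\bar t$ resp. $\bar s$ for $\bar v$.

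Having the core claim, regularity of $T'$ is immediate: given $\phi(v)$ and $t,s$ with $t^{\circ}=s^{\circ}$, let $m$ be the common complexity of $\phi(t)$ and $\phi(s)$ — substitution of a closed term does not alter logical complexity — and take $c=m+1$. By the definition of $T'$ together with the compatibility of the $T'_c$ (Lemma \ref{lem: zgodnosc i okreslonosc TTc}) we have $T'(\phi(t)) \equiv T'_c(\phi(t))$ and likewise for $s$, and since $m<c$ the core claim yields $T'_c(\phi(t)) \equiv T'_c(\phi(s))$. For the predicates $T'_b$ I would argue by cases on the complexity $m$ of $\phi(t)$: if $m>b$, then by Lemma \ref{lem: zgodnosc i okreslonosc TTc}, clause~1, both $T'_b(\phi(t))$ and $T'_b(\phi(s))$ are false, so the equivalence is trivial; if $m<b$, the core claim applies directly; and the boundary case $m=b$ is reduced to the already-settled case $c=b+1>m$ by using Lemma \ref{lem: zgodnosc i okreslonosc TTc}, clause~2, to rewrite $T'_b(\phi(\cdot))$ as $T'_{b+1}(\phi(\cdot))$ on $A_b$.

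The main obstacle is organizing the induction so that compositionality is never invoked at the critical complexity equal to the index $c$, where Lemma \ref{kompozycyjnosc ttc} gives nothing; this is precisely why the core claim is restricted to complexity $<c$, and why regularity of $T'$ and of $T'_b$ is obtained by first moving to a predicate $T'_c$ with index strictly above the formula's complexity, the compatibility lemma doing the bookkeeping. The other point requiring care is the multi-variable formulation of the core claim, together with checking that the whole regularity statement, with $T'_c$ appearing only through the substitution $[T'_c/P]$, genuinely falls under the scope of the full induction provided by Lemma \ref{lemat_induktywnosc_TTc}.
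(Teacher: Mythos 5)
Your proposal is correct and follows essentially the same route as the paper: the paper likewise reduces regularity of $T'$ (and of the $T'_b$) via the compatibility Lemma \ref{lem: zgodnosc i okreslonosc TTc} to the claim that $T'_b$ is extensional on formulae of complexity at most $b$, and proves that claim by a ``straightforward $\Pi_1$-induction on $d$'' --- exactly your core claim, with the induction legitimized by the full inductiveness of $T'_b$ (Lemma \ref{lemat_induktywnosc_TTc}) and the steps carried by compositionality (Lemma \ref{kompozycyjnosc ttc}). Your write-up merely makes explicit the details (atomic case, the handling of the extra variable in the quantifier step, the boundary case $m=b$) that the paper compresses into a single sentence.
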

\begin{proof}
	 We know that for all $d \leq b$ and $x \in A_d$ we have 
	\begin{displaymath}
	T'(x) \equiv T'_b(x).
	\end{displaymath}
	Thus it is enough to prove that for arbitrary $b$ the following equivalence holds:
		\begin{displaymath}
		\forall d \leq b \forall \phi(v) \in A_d \forall t,s\ \ \bigl(  s^{\circ} = t^{\circ} \rightarrow  T'_b\phi(t)\equiv T'_b\phi(s)\bigr). 
		\end{displaymath}
	This however may be shown by a straightforward $\Pi_1$-induction on $d.$
\end{proof}

\begin{lemat}[Quantifier axioms for $T'$] \label{lem: aksjomaty kwantyfikatorowe dla T prim}
	Provably in $\CT_0$ The predicate $T'$ satisfies the compositional axioms for quantifiers in the term formulation, i.e.
	\begin{enumerate}
		\item $\forall v, \phi(v) \ \ T'(\forall v \ \phi(v)) \equiv \forall t \ T'(\phi(t)).$ 
		\item  $\forall v, \phi(v) \ \ T'(\exists v \ \phi(v)) \equiv \exists t \ T'(\phi(t)).$
	\end{enumerate}
\end{lemat}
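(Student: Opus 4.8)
The goal is to establish the term-formulation quantifier axioms for $T'$ from the numeral-formulation ones that are already built into $T'$ via compositionality (Lemma \ref{lem_compositionality_T'}), using regularity (Lemma \ref{lem: regularnosc T prim}) as the bridge. The plan is to treat the two directions of each biconditional separately, since they have genuinely different character: one direction is an instance of the \emph{dictum de omni} intuition and follows from regularity alone, while the other is the nontrivial direction requiring that every element be named by a numeral.

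First I would handle the universal quantifier. Working in $\CT_0$, fix $\phi(v)$. For the left-to-right direction, assume $T'(\forall v \ \phi(v))$. By the numeral compositional clause (Lemma \ref{lem_compositionality_T'}(4)) this gives $\forall x \ T'(\phi(\num{x}))$. Now fix an arbitrary closed term $t$. Since $\PA$ proves that every closed term has a value and that $t^{\circ} = (\num{t^{\circ}})^{\circ}$, regularity of $T'$ (Lemma \ref{lem: regularnosc T prim}) yields $T'(\phi(t)) \equiv T'(\phi(\num{t^{\circ}}))$; instantiating the universal numeral statement at $x = t^{\circ}$ gives $T'(\phi(\num{t^{\circ}}))$, hence $T'(\phi(t))$. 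As $t$ was arbitrary, $\forall t \ T'(\phi(t))$. For the converse, assume $\forall t \ T'(\phi(t))$. Each numeral $\num{x}$ \emph{is} a closed term, so in particular $T'(\phi(\num{x}))$ holds for every $x$; the numeral clause then gives $T'(\forall v \ \phi(v))$. The existential case is dual: for left-to-right, from $T'(\exists v \ \phi(v))$ the numeral clause produces some $x$ with $T'(\phi(\num{x}))$, and since $\num{x}$ is itself a term we obtain $\exists t \ T'(\phi(t))$; for right-to-left, from a witnessing term $t$ with $T'(\phi(t))$ we use regularity to pass to $T'(\phi(\num{t^{\circ}}))$, witnessing $\exists x \ T'(\phi(\num{x}))$, whence $T'(\exists v \ \phi(v))$.

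The only point demanding care is the interface between the metatheoretic substitution notation and the formalized substitution function: I must make sure that $\phi(t)$ for a (possibly nonstandard) term $t$ is read via $\Subst$, and that the equality $t^{\circ} = (\num{t^{\circ}})^{\circ}$ together with the hypothesis of the regularity axiom is applied at the level of \emph{codes} of terms, not just standard terms. This is exactly the content that regularity was designed to deliver, so the main obstacle is bookkeeping rather than mathematical: one must verify that regularity as stated quantifies over all (coded) terms $t,s$ and all (coded) formulae $\phi(v)$, so that the instantiation $s = \num{t^{\circ}}$ is legitimate even for nonstandard $t$. Granting that, each of the four implications reduces to a one-line combination of Lemma \ref{lem_compositionality_T'} and Lemma \ref{lem: regularnosc T prim}, and the lemma follows.
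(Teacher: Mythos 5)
Your proposal is correct and takes essentially the same route as the paper: the paper's own proof is a one-line observation that the lemma ``follows immediately'' from regularity (Lemma \ref{lem: regularnosc T prim}) together with the fact that, provably in $\PA$, every closed term $t$ has a unique numeral $\num{a}$ with $t^{\circ} = (\num{a})^{\circ}$ --- with the numeral-form clauses of Lemma \ref{lem_compositionality_T'} used implicitly exactly as you use them. Your write-up merely spells out the four implications that the paper leaves to the reader, so there is no substantive difference.
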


\begin{proof}
	This follows immediately from Lemma \ref{lem: regularnosc T prim}, since provably in $\PA$ (and in fact in much weaker theories) for every term $t$ there exists a unique numeral $\num{a}$ with $t^{\circ} = (\num{a})^{\circ}.$   
\end{proof}

Recall that by the global reflection principle for $T'$ we mean the axiom:
\begin{displaymath}
\forall \phi \ \Bigl(\Prov_{T'} \phi \longrightarrow T' \phi \Bigr),
\end{displaymath}
where $\Prov_{T'}$ is an instance of $\Prov_{\tau}$ defined in Convention \ref{konw: glupia konwencja} with $\tau = T'.$ Intuitively $\Prov_{T'}$ means first-order provability from some premises $\phi$ satisfying $T'(\phi).$

\begin{lemat}[Global reflection for $T'$] \label{lem_global_reflection_T'}
	$\CT_0$ proves the global reflection principle for the formula $T'.$
\end{lemat}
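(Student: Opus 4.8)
The plan is to establish the global reflection principle for $T'$ by induction on the length of a sequent-calculus proof, exploiting the fact (already proved) that $T'$ is compositional, regular, and — crucially — fully inductive on each bounded cut $T'_b$. The statement to prove is $\forall \phi \ (\Prov_{T'}(\phi) \rightarrow T'(\phi))$, where $\Prov_{T'}$ asks for a proof $d$ in the sequent calculus whose extra initial sequents $\longrightarrow \gamma$ all satisfy $T'(\gamma)$.

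First I would fix a model $(M,T) \models \CT_0$ and a putative proof $d$ with arithmetical conclusion, all in the sense of $M$; let $b$ be an element of $M$ bounding the logical complexity of every formula occurring in $d$ (such a bound exists since $d$ is a single coded object). By Lemma~\ref{lem: zgodnosc i okreslonosc TTc} and the coding-complexity convention, on all formulae appearing in $d$ the predicate $T'$ agrees with $T'_b$, so it suffices to argue the soundness claim using $T'_b$, which has the advantage of being \emph{fully inductive} by Lemma~\ref{lemat_induktywnosc_TTc}. The key step is then to prove, by induction on the position $i \leq \len(d)$ within the proof, the following soundness invariant: for the $i$-th sequent $\Gamma_i \longrightarrow \Delta_i$ and for every assignment of closed terms to its free variables, if $T'_b$ holds of every (substituted) formula in the antecedent $\Gamma_i$, then $T'_b$ holds of some (substituted) formula in the succedent $\Delta_i$. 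Because the quantifiers "every formula of the antecedent" and "some formula of the succedent" range over the coded, hence bounded, sets of formulae occurring in $d$, this invariant is expressible; and the induction on $i$ is a legitimate application of the \emph{full} induction available for $T'_b$.

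The inductive step proceeds rule by rule. The initial logical sequents (axioms $A \longrightarrow A$) and the extra premise sequents $\longrightarrow \gamma$ with $T'(\gamma)$ are handled directly, the latter exactly because $\Prov_{T'}$ guarantees $T'(\gamma)$, which gives $T'_b(\gamma)$ on the relevant bound. For the logical rules — the propositional connective rules, the cut rule, and the quantifier rules — I would invoke the compositionality of $T'_b$ from Lemma~\ref{kompozycyjnosc ttc} (valid here since every formula in $d$ has complexity below $b$) together with its regularity from Lemma~\ref{lem: regularnosc T prim}. Regularity is what makes the quantifier rules go through in the \emph{term} formulation: the right-$\forall$ and left-$\exists$ rules introduce an eigenvariable and a witnessing term respectively, and Lemma~\ref{lem: aksjomaty kwantyfikatorowe dla T prim}, i.e. the term-style quantifier axioms $T'(\forall v\,\phi)\equiv\forall t\,T'(\phi(t))$ and $T'(\exists v\,\phi)\equiv\exists t\,T'(\phi(t))$, lets me pass between truth of a quantified formula and truth of all/some of its term-instances. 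Having established the invariant for the final sequent $\longrightarrow \phi$ of $d$, I conclude $T'_b(\phi)$, hence $T'(\phi)$.

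The main obstacle I expect is \emph{not} the rule-by-rule verification, which is the standard Tarskian soundness bookkeeping, but rather making sure the induction on proof length is actually available. The whole point of the construction is that naive induction on the structure of proofs is unavailable in $\CT_0$ for formulae mentioning the original $T$, since $\CT_0$ only grants $\Delta_0$-induction for $T$; the soundness invariant here is visibly not $\Delta_0$ in $T$. This is exactly why I must run the argument through $T'_b$ rather than $T'$ directly, and invoke Lemma~\ref{lemat_induktywnosc_TTc}, which delivers \emph{full} induction for $T'_b$ for each fixed $b$. So the one delicate point to get right is the uniform choice of the complexity bound $b$ from the coded proof $d$, the verification that all formulae occurring in $d$ indeed lie in $A_{b}$, and the observation that on this bounded fragment the term-formulation quantifier rules are sound because $T'_b$ is compositional and regular \emph{up to complexity $b$} — precisely the fragment the proof $d$ lives in.
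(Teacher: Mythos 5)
Your proposal is correct and follows essentially the same route as the paper's own proof: bound the complexity of all formulae in the proof $d$ by some $b$, use Lemma \ref{lem: zgodnosc i okreslonosc TTc} to replace $T'$ by $T'_b$ on that fragment, and then run the standard Tarskian soundness induction on proof length, which is legitimate because $T'_b$ is fully inductive (Lemma \ref{lemat_induktywnosc_TTc}), compositional up to complexity $b$ (Lemma \ref{kompozycyjnosc ttc}), and satisfies the term-formulation quantifier axioms (Lemmata \ref{lem: regularnosc T prim} and \ref{lem: aksjomaty kwantyfikatorowe dla T prim}). The only difference is one of presentation: the paper compresses the induction into a reference to ''the usual proof'' for fully inductive $\CT$, whereas you spell out the soundness invariant and the rule-by-rule verification explicitly.
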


\begin{proof}
	Working in $\CT_0$, take an arbitrary (code of a) proof $d$ with the conclusion $\phi,$  all of whose premises $\psi$ satisfy $T'(\psi).$ Then there exists a $b$ such that for every formula $\eta$ in $d$ there exists $y<b$ such that:
	\begin{displaymath}
	\eta \in A_y.
	\end{displaymath}
	
	Then for each $x<b$ we have:
	
	\begin{displaymath}
	T'(x) \equiv T'_b(x).
	\end{displaymath}
	
	So it is enough to prove that $T'_b(\phi)$ holds. But this may be easily proved by Lemmata \ref{lemat_induktywnosc_TTc} and \ref{lem: aksjomaty kwantyfikatorowe dla T prim} analogously to the usual proof of the global reflection principle for $\CT$ with compositional clauses for quantifiers in term formulation and with the full induction for the compositional truth predicate, since the formulae $T'_b(x)$ are fully inductive by Lemma \ref{lemat_induktywnosc_TTc} and compositional for formulae of logical complexity at most $b$ by Lemma \ref{kompozycyjnosc ttc}. They satisfy compositional axioms for quantifiers in the term formulation, since $T'$ does by Lemma \ref{lem: aksjomaty kwantyfikatorowe dla T prim} and as already noted for all $x<b$ we have 	$T'(x) \equiv T'_b(x)$.  
\end{proof}

Analogously, we obtain the following lemma:

\begin{lemat}[Axiom soundness property] \label{lem_surface_soundness_PA}
	$\CT_0$ proves the following sentence:
	\begin{displaymath}
	\forall \phi \ \Bigl( \Ax_{\PA}(\phi) \longrightarrow T' (\phi) \Bigr).
	\end{displaymath}
\end{lemat}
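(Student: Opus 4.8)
The plan is to prove the \textbf{Axiom Soundness Property} for $\PA$, i.e. that $\CT_0 \vdash \forall \phi \ (\Ax_{\PA}(\phi) \rightarrow T'(\phi))$, by leveraging exactly the machinery already assembled for $T'$, in close parallel to the proof of Lemma \ref{lem_global_reflection_T'}. Working in $\CT_0$, I would fix an arbitrary $\phi$ with $\Ax_{\PA}(\phi)$, and the first move is a reduction to a single bounded predicate $T'_b$: since $\phi$ is a genuine element of the model, its logical complexity is bounded, so there exists $b$ with $\phi \in A_y$ for some $y < b$. By Lemma \ref{lem: zgodnosc i okreslonosc TTc} we then have $T'(\phi) \equiv T'_b(\phi)$, so it suffices to establish $T'_b(\phi)$. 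The point of this reduction is that $T'_b$ is \emph{fully inductive} (Lemma \ref{lemat_induktywnosc_TTc}), compositional for formulae of complexity at most $b$ (Lemma \ref{kompozycyjnosc ttc}), and satisfies the quantifier axioms in term formulation (Lemma \ref{lem: aksjomaty kwantyfikatorowe dla T prim} via Lemma \ref{lem: regularnosc T prim}), so $T'_b$ behaves, on sentences below $b$, exactly like a fully inductive compositional truth predicate.

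Next I would proceed by cases on the logical form of a $\PA$-axiom, exactly as in the standard verification that the full compositional truth predicate $\CT$ satisfies axiom soundness. The logical and equality axioms of $\PA$ are of \emph{standard} syntactic shape, so their truth under $T'_b$ follows directly from the compositional clauses and regularity, with no induction needed. The robinson-style arithmetical axioms are likewise standard sentences, and their truth reduces via the compositional clauses and the atomic clause $T'_b(s=t) \equiv s^\circ = t^\circ$ to arithmetical facts that $\PA$ proves outright. The only genuinely interesting case is the \textbf{induction scheme}: here $\phi$ is the universal closure of an induction instance $(\theta(\num{0}) \wedge \forall x (\theta(x) \rightarrow \theta(Sx))) \rightarrow \forall x \ \theta(x)$ for some arithmetical $\theta$, which may itself be \emph{nonstandard}. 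I would unwind the compositional quantifier clauses for $T'_b$ to reduce $T'_b(\phi)$ to the statement that for every valuation of the parameters, if the premises hold (in the sense of $T'_b$) then so does the conclusion, and then apply the full inductiveness of $T'_b$ (Lemma \ref{lemat_induktywnosc_TTc}) to the formula $\theta[T'_b/P]$ to close the induction internally.

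The step I expect to be the \textbf{main obstacle} is precisely this induction-axiom case, because the axiom may be nonstandard and one must pass from the external full induction supplied by Lemma \ref{lemat_induktywnosc_TTc} to the internal truth of a possibly nonstandard universal closure. The subtlety is the same one flagged throughout the paper: the quantifier clauses for $T'_b$ give instances only for \emph{numerals} $\num{x}$, and one must use regularity (Lemma \ref{lem: regularnosc T prim}) and the term-formulation quantifier axioms (Lemma \ref{lem: aksjomaty kwantyfikatorowe dla T prim}) to align this with the arbitrary parameters appearing in the induction instance, together with a $\Pi_1$-style induction on the length of the block of universal quantifiers in the universal closure to handle the parameters one at a time. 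Because all of this is an exact replay of the $\CT$ argument applied to the bounded predicate $T'_b$ --- which, on formulae below $b$, has \emph{all} the properties of a full compositional inductive truth predicate --- I would ultimately present the proof as ``analogously to Lemma \ref{lem_global_reflection_T'}'', verifying axiom soundness for $\PA$ by reduction to $T'_b$ and citing Lemmata \ref{lemat_induktywnosc_TTc}, \ref{kompozycyjnosc ttc}, and \ref{lem: aksjomaty kwantyfikatorowe dla T prim} for the decisive induction case.
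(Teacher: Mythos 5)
Your proposal is correct and follows essentially the same route as the paper, which proves this lemma simply ``analogously'' to Lemma \ref{lem_global_reflection_T'}: reduce $T'$ to a single $T'_b$ via Lemma \ref{lem: zgodnosc i okreslonosc TTc}, then replay the standard soundness argument for a fully inductive compositional truth predicate, with the induction scheme handled exactly as you describe (the paper's earlier sketch for $\CT_1$ spells out the same $\Pi_1$ induction on the block of universal quantifiers). One cosmetic caveat: since $\theta$ may be nonstandard, ``$\theta[T'_b/P]$'' is not literally a formula to which Lemma \ref{lemat_induktywnosc_TTc} applies; one should instead apply that lemma to the standard formula $P(\Subst(z,\num{v}))$ with $\theta$ entering as the parameter $z$, which is clearly what you intend.
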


Now we are ready to prove the main theorem of our paper.

\begin{proof}[Proof of Theorem \ref{tw_niekonserwatywnosc_ct0}]
	
	We want show that $\CT_0$ $\PA$-relatively defines $\CT_0$ extended with the global reflection principle and the axiom soundness property for $\PA$. It is enough to observe that by the lemmata \ref{lem_bounded_induction_T'}, \ref{lem_compositionality_T'}, \ref{lem_global_reflection_T'} and \ref{lem_surface_soundness_PA} the formula $T'(x)$ satisfies the axioms of the latter theory, so it gives us the interpretation of the latter theory in $\CT_0$ which fixes the arithmetical vocabulary.
\end{proof}

Let us observe that to establish the non-conservativity result for $\CT_0$ one actually needs only two properties of the compositional truth predicate: disjunctive correctness and internal induction. By Theorem \ref{EV} of Enayat--Visser and Leigh\footnote{See \cite{enayatvisser}, remarks in Section 6 and \cite{leigh}, Theorem 3.} the theory $\CT^-$ augmented with the latter principle is still conservative. We do not know yet whether this is true also for the former. 

Let us end this section by a corollary we have referred to in the abstract. Recall the usual version of $\CT^-$ with compositional axioms for quantifiers of the form
\begin{displaymath}
\forall \phi(v) \ \ T(Q v \ \phi(v)) \equiv Q t \ T(\phi(t)).
\end{displaymath}

Let us denote by $\CT^-_t$ this usual version of $\CT^-$. Since the regularity principle implies that the theories $\CT^-$ and $\CT^-_t$ are equivalent, our results actually imply:

\begin{wniosek} \label{wn_zwykle_ct-}
	$\CT^-_t$ extended with $\Delta_0$-induction for the full language and the regularity principle is not conservative over $\PA.$
\end{wniosek}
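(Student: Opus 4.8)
The plan is to derive Corollary \ref{wn_zwykle_ct-} from the main theorem by exploiting the fact that the regularity principle \eqref{regularity} collapses the distinction between $\CT^-$ (with quantifier axioms in the numeral formulation) and $\CT^-_t$ (with quantifier axioms in the term formulation). First I would verify the equivalence of the two quantifier formulations in the presence of \eqref{regularity}. Working over $\PA$ with a truth predicate satisfying the atomic, Boolean and negation clauses, assume \eqref{regularity}. Since $\PA$ proves that every closed term $t$ has a numeral $\num{a}$ with $t^\circ = (\num{a})^\circ$, regularity yields $T\phi(t) \equiv T\phi(\num{a})$, so the quantifier $\forall t$ ranging over all terms and the quantifier $\forall x$ ranging (via $\num{x}$) over all numerals have the same truth-theoretic effect. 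Hence $T(\forall v\ \phi(v)) \equiv \forall t\ T(\phi(t))$ holds if and only if $T(\forall v\ \phi(v)) \equiv \forall x\ T(\phi(\num{x}))$ holds, and likewise for the existential quantifier. This shows that $\CT^-_t + \eqref{regularity}$ and $\CT^- + \eqref{regularity}$ have exactly the same theorems; indeed they have the same axioms modulo this provable equivalence.

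Next I would transfer the bounded induction hypothesis. The theory in question is $\CT^-_t$ extended with $\Delta_0$-induction for the full language (i.e. including formulae with $T$) and with \eqref{regularity}; by the equivalence just established this is the same theory as $\CT_0 + \eqref{regularity}$, since $\CT_0$ is by Definition \ref{CT0} precisely $\CT^-$ with $\Delta_0$-induction for the truth predicate. In particular, this theory proves everything $\CT_0$ proves. By Theorem \ref{tw_niekonserwatywnosc_ct0}, $\CT_0$ is not conservative over $\PA$: it relatively defines a predicate $T'$ proving the global reflection principle and the axiom soundness property for $\PA$, whence it proves $\Prov_{\PA}(\phi) \rightarrow T'(\phi)$ and in particular the consistency of $\PA$, a $\Pi_1$ arithmetical sentence not provable in $\PA$. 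Since the stronger theory $\CT^-_t + \Delta_0\text{-induction} + \eqref{regularity}$ proves at least as much, it too proves $\mathrm{Con}(\PA)$ and is therefore non-conservative over $\PA$.

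The step I expect to require the most care is the precise bookkeeping of which induction instances are available after the translation. The subtlety is that $\CT_0$ is defined with $\Delta_0$-induction \emph{for the truth predicate}, while the corollary assumes $\Delta_0$-induction for the full enriched language; one must check that the former is genuinely a subset of the latter so that the implication goes in the needed direction. This is immediate here because the corollary's induction is at least as strong, so all lemmata used in the proof of Theorem \ref{tw_niekonserwatywnosc_ct0}, which rely only on $\Delta_0$-induction (through disjunctive correctness and internal induction, Lemmata \ref{disjunctive} and \ref{internal}), remain available. The only genuinely new ingredient is the regularity-based equivalence of quantifier axioms, which is elementary but must be invoked explicitly to license reading $\CT^-_t$ as $\CT^-$.

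In summary, the argument is: (i) under \eqref{regularity} the term and numeral quantifier axioms are interderivable over the common base, so $\CT^-_t + \eqref{regularity}$ and $\CT^- + \eqref{regularity}$ coincide; (ii) adding full-language $\Delta_0$-induction makes the resulting theory an extension of $\CT_0$; (iii) $\CT_0$ already fails to be conservative over $\PA$ by Theorem \ref{tw_niekonserwatywnosc_ct0}, proving for instance $\mathrm{Con}(\PA)$; hence (iv) the extension $\CT^-_t + \Delta_0\text{-induction} + \eqref{regularity}$ inherits this non-conservativity. No difficult new construction is needed; the corollary is essentially a repackaging of the main theorem once the quantifier-axiom formulations are reconciled via regularity.
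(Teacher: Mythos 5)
Your proposal is correct and follows essentially the same route as the paper: the paper's own justification is precisely that the regularity principle makes $\CT^-$ and $\CT^-_t$ equivalent, so that the theory in the corollary extends $\CT_0$ and inherits non-conservativity from Theorem \ref{tw_niekonserwatywnosc_ct0}. Your additional bookkeeping (the explicit regularity-based interderivability of the quantifier axioms, and unpacking non-conservativity via $\mathrm{Con}(\PA)$) just spells out details the paper leaves implicit.
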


		\section{A variation of the main result}
		
		In this section we shall prove that $\CT_0$ is actually very close to proving the global reflection principle for its truth predicate. We will show that a very natural modification of this theory accomplishes this aim. First we shall formulate $\CT_0$ over a theory $\PA^+$ which is simply $\PA$ formalized in an enriched language with additional function symbols for some primitive recursive functions and extended with axioms determining the meaning of new symbols. Observe that the axioms of $\CT_0$ remain unchanged, but the notion of \textit{a term} is substantially enriched. To such a theory we add an axiom which generalizes the regularity principle \eqref{regularity} to substitutions of boundedly-many terms at once.
		

		Now we introduce a short sequence of definitions:
		
		\begin{definicja}\
			\begin{enumerate}
				\item Let $\mathcal{L}^+ = \mathcal{L}_{\PA}\cup\{(x)_y\}$, where $(x)_y$ is a two-argument function (with the intended reading  ``the result of the projection of $x$ to its $y$-th component'').
				\item Let $\pi(x,y,z)$ be an $\mathcal{L}_{\PA}$ formula expressing that $z$ is the $y$-th element of the sequence $x$. \textbf{$\PA^+$} is the theory theory in $\mathcal{L}^+$ containing the usual axioms of $\PA$ (we allow formulae of $\mathcal{L}^+$ in the induction axioms) and additionally the following axiom for $(x)_y$ 
				\[\forall x,y,z\ \ \bigr((x)_y = z \equiv \pi(x,y,z)\bigl)\]
			\end{enumerate}
		\end{definicja}
		
		\begin{konwencja}
			For each $n$, $v=\Seq{x_1,\ldots,x_n}$ is the arithmetical formula representing in $\PA$ the relation ''$v$ is the code of the sequence of length $n$, containing as its first element $x_1$, as its second $x_2$,..., and as its $n$-th $x_n$''.
		\end{konwencja}
		
		We extend the arithmetization so that it embraces the two-argument symbol $(x)_y$. We assume that in $\PA^+$ the definable syntactical relations apply to the enriched language, hence $\textnormal{Term}(x)$, $\textnormal{Form}(x)$ mean ''$x$ is a (code of a) $\mathcal{L}^+$ term'', ''$x$ is a (code of a) $\mathcal{L}^+$ formula'' respectively. Additionally, we need the following formulae \footnote{Mind that \textit{sequence}, \textit{term} etc. in the definition below mean \textit{a code of sequence}, \textit{a code of term} rather than truly finite sequence, true term, defined externally.}:
		
		\begin{definicja}[$\PA^+$]\
			\begin{enumerate}
				\item $\TermSeq(x)$ which says ''$x$ is a sequence of closed terms.'' 
				\item We extend the function $\val{\cdot}$ to terms of $\mathcal{L}^+$ by putting
				\[\val{(t)_s} = (\val{t})_{\val{s}}\] 
				where $t$, $s$ are arbitrary terms. In $\PA^+$ we define a generalized function of valuation which, apart from terms, is applicable also to \emph{sequences} of terms, yielding the sequence of values of those terms. Since we will be interested only in values of sequences of terms, we will denote this function by $\val{\cdot}$, in the same way in which the standard function of valuation was denoted.
				\item  $\Subst(x,y)$ is now a \emph{generalized} function of substitution with the following properties
				\begin{enumerate}
					\item $y$ must be a sequence of closed terms such that if $a$ is the number of distinct variables 
					occurring in $x$ then the length of $y$ is at least $a$,
					
					\item  $\Subst(x,\tau)$ is the effect of formal substitution in $x$ for free variable $x_z$ the term coded by the $z$-th element of sequence $\tau$ (for every $z$).

				\end{enumerate} 
				As indicated in the previous section, in order not to complicate the formulae, we shall be writing $\phi(\tau)$ instead of $\Subst(\phi,\tau)$, even in the case when $\tau$ is a sequence of terms.
				\item Let $\tau$ be any sequence. By $\tau^*$ we denote the unique sequence $y$ such that
				\begin{enumerate}
					\item $\len(\tau) = \len(y)$
					\item $y$ has on its $i-$th place ($i<\len(y) = \len(\tau)$) the G\"odel code of the term $(\num{\tau})_{\num{i}}$.\footnote{Formally: $\forall i< \len(y)\ \ (y)_i = \qcr{(}\conc \num{\tau}\conc\qcr{)}\conc \num{i}$.}
				\end{enumerate} 
				
			\end{enumerate}
			
		\end{definicja}
		
		\begin{przyklad}
			Let $n = \Seq{\qcr{SS(0)},\qcr{S(0)+0}, \qcr{S(S(0) +S(0))}}$. Then
			\[\val{n} = \Seq{2, 1, 3} \]
			and
			\[n^* = \Seq{\qcr{(S^n0)_{S0}}, \qcr{(S^n0)_{SS0}}, \qcr{(S^n0)_{SSS0}} }\]
			and $\val{\qcr{(S^n0)_{S(0) + S(0)}}} = (n)_{2} = \qcr{S(0)+0}$.
		\end{przyklad}
		
		\begin{uwaga}[$\PA^+$]\label{rem: t=valt*}
			Let $\tau$ be any sequence. Then $\tau^*$ is a sequence of closed terms and $\tau=\val{\tau^*}$.
		\end{uwaga}

		\begin{uwaga}
			Mind the difference between $\phi(\tau^*)$ and $\phi(\tau)$. For example if 
			$$\phi = \qcr{x_1+x_2 = x_3}$$ 
			and $\tau = \Seq{\qcr{0},\qcr{S0+SS0},\qcr{SS0}}$, then (provably in $\PA^+$)
			\[\phi(\tau)=_{def}\Subst(\phi,\tau) = \qcr{0+(S0+SS0)=SS0}\]
			and 
			\[\phi(\tau^*)=_{def}\Subst(\phi,\tau^*) = \qcr{(\num{\tau})_{S0}+(\num{\tau})_{SS0}=(\num{\tau})_{SSS0}}\]
		\end{uwaga}
		Note that in the previous sections it was irrelevant for our argumentation whether the above mentioned functions are primitive or defined symbols. Now it becomes crucial to our argument, and the reason for extending the language will become apparent when proving the main theorem of this section.
		
		\begin{definicja}
			Let $\phi$ be an $\mathcal{L}^+$ formula. We say that an occurrence of term $t$ in $\phi$ is \df{bounded}  if and only if it contains a bounded occurrence of a variable. An occurrence of a term $t$ is \df{free} if it is not bounded. 
		\end{definicja}
		
		\begin{przyklad}
			$\qcr{SS(v)}$ and $\qcr{SS(v) + S(y)}$ have bounded occurrences in
			\[\phi = \qcr{\exists v \ \ SS(v) + S(y) = SS(z)}\]
			but $\qcr{S(y)}$ and $\qcr{SS(z)}$ don't.
		\end{przyklad}
		
		We will need a more refined measure of complexity of formulae than the one defined in the previous section (called \textit{logical complexity} there). To define precisely the conditions which it should meet, let us introduce one more notion, borrowed from \cite{leigh}:
		\begin{definicja}[$\PA^+$]
			Let $\phi$ be an $\mathcal{L}^+$-formula and $w$ be any number which is not a code of any $\mathcal{L}^+$ symbol. Let $\mathcal{L}^w$ be a language resulting by adding $w$ to $\mathcal{L}^+$. We treat $w$ as an additional free variable for marking places for terms in a formula. Formally: the formula $\phi'$ results from $\phi$ by formally substituting $w$ for every free variable of $\phi$. The formula $\bar{\phi}$ results from $\phi'$ by formally substituting $w$ for every term $t$ such that every variable occurring in $t$ is equal to $w$. If $\psi$ is any $\mathcal{L}^+$- formula, we put
			\[\phi\sim\psi\]
			iff $\bar{\phi} = \bar{\psi}$.
		\end{definicja}  
		The above definition serves for formalizing the relation of ''being the same up to substitution of terms for free occurrences of terms''. We demand that our measure of complexity have two properties:\footnote{''FIN'' is a short for ''FINite'' and ''COM'' for ''COMpositional''.}
		\begin{enumerate}
			\item[FIN] Provably in $\PA^+$, for every $n,k$, there are only finitely many $\sim$-equivalence classes of formulae of complexity less than $n$ which use variables (either as bounded or free ones) with indices smaller than $k$.
			\item[COM] The measure of $\phi\otimes\psi$ and $\neg \psi$ is greater than the measure of $\phi,\psi$ (for $\otimes\in \{\wedge,\vee\}$). The measure of $Qx\phi(x)$ (for $Q\in \{\forall,\exists\}$) is greater than the measure of $\phi(t)$ for every closed term $t$.
		\end{enumerate}
		The measure given by the logical complexity of a formula, as used in the previous section, does not satisfy property FIN (at least for formulae of language $\mathcal{L}$ (and consequently $\mathcal{L}^+$) which contains infinitely many closed terms\footnote{For example there are infinitely many sentences of the form $t_1 = t_2$ and syntactic tree for every such formula has logical complexity $0$.}). If we tried to use the G\"{o}del number of $\phi$ as its size, then the resulting measure would not satisfy COM. The next definition supplies us with an appropriate measure. 
		
		\begin{definicja} Let $\phi$ be an $\mathcal{L}^w$ formula.
			\begin{enumerate}
				\item The \df{syntactic tree} of an $\mathcal{L}^w$ formula $\phi$ is defined as usual except for the fact that we unravel also terms occurring in $\phi$. In consequence the only $\mathcal{L}^w$ symbols that are allowed to occur in leaves of the syntactic tree of $\phi$ are individual constants and variables.
				
				\item We say that a formula $\phi$ has \df{complexity at most} $n$ if and only if the height of the syntactic tree of $\bar{\phi}$ (i.e. the largest number of vertices on a maximal path) is at most $n$. The \df{complexity} of $\phi$ is the least $n$ such that $\phi$ is of complexity at most $n$.
			\end{enumerate}	
		\end{definicja}
		
		It is straightforward to check that our definition of complexity measure satisfies FIN and COM as stated above.  $\bar{\phi}$ will serve us also to define the \emph{template} of $\phi$. In fact $\bar{\phi}$ is very close to match our objectives: the only improvement we need to introduce is to number the occurrences of $w$ in $\bar{\phi}$. The next definition accomplishes this aim: 
		
		\begin{definicja}[$\PA^+$]\label{def: dluga}\
			\begin{enumerate}
				\item Let $\was{e_i}$ be an injective enumeration of a set of numbers which are not codes of any $\mathcal{L}^+$ symbols (starting from $e_1$ for simplicity). We will treat them as additional free-variable symbols (we will allow substituting closed terms for them but they are not part of $\mathcal{L}^+$). Let $\mathcal{L}^*$ be the language resulting from $\mathcal{L}^+$ by adding $\was{e_i}$ as new variable symbols.\footnote{In particular we assume that provably in $\PA^+$ $\mathcal{L}^+$ is a sublanguage of $\mathcal{L}^*$.}
				
				\item Let $\phi$ be an $\mathcal{L}^w$ formula and $x_1,x_2$ be two occurrences of free variables in $\phi$. We put 
				\[x_1\preccurlyeq_{\phi} x_2\]
				if and only if $x_1$ is more to the left in the syntactic tree of $\phi$ than $x_2$.
				
				\item Suppose now $\psi$ is an $\mathcal{L}^+$ formula. $\phi^*$ is the formula resulting from $\bar{\phi}$ by substituting $e_0,e_1, e_2, \ldots$ in $\bar{\phi}$ for the first, the second, the third, $\ldots$ occurrence of $w$ respectively, 
				(where the respective ordering of occurrences of free variables is $\preccurlyeq_{\phi}$). $\phi^*$ is called \df{the template of} $\phi$. Let us note that the only variables which occurs freely in $\phi^*$ are the $e_i$'s. In particular no variables from $\mathcal{L}$ occurs freely in $\phi^*$. Let us note also that for every $\phi\in\mathcal{L}^+$
				\[\phi\sim\phi^*\]
				
				\item Provably in $\PA^+$, for every $\phi\in \mathcal{L}^+$ there exists a coded set of those indices $i$ such that $e_i$ occurs in $\phi^*$. For a given $\phi$, such a set will be denoted by $E(\phi)$.
				
				\item We define a natural extension of the function $\Subst$ so that it can operate on templates (and denote with the same symbol and use the same conventions): we assume that if $\phi^*$ is the template of an $\mathcal{L}^+$ formula and $\tau$ is a sequence of $\mathcal{L}^+$ closed terms, then $\Subst(\phi^*, \tau)$ is the result of the following substitution in $\phi^*$: for every $i\in E(\phi)$\footnote{For simplicity we do not assume that $\Subst$ is defined for all formulae of $\mathcal{L}^*$, but only for formulae of $\mathcal{L}^+$ and templates (which do not contain any free variables from $\mathcal{L}^+$.)}. 
				\[\textnormal{the $i$-th element of $\tau$ is substituted for $e_i$}\]
				
				\item\label{tefi} Provably in $\PA^+$ for every $\phi\in \mathcal{L}^+$ the template of $\phi$ is uniquely determined. Moreover there is only one sequence of terms $\tau$ such that 
				\begin{enumerate}
					\item $\phi = \phi^*(\tau)$
					\item $\len(\tau) = \max\set{i}{e_i\in E(\phi)}$
				\end{enumerate}
				The last condition is added only to guarantee uniqueness of such $\tau$, and it won't play any important role in our proof. Such a sequence of terms will be denoted by $\tau_{\phi}$.
			\end{enumerate}
		\end{definicja}

		\begin{przyklad}
			The syntactic tree of $\qcr{\exists v \ \ SS(v) + S(y) = SS(z)}$ is the (code of the) following:
			\begin{displaymath}
			\xymatrix{ & &\exists v\ar[d] & \\
				& & = \ar[dl]\ar[dr]&\\
				&+\ar[dl]\ar[dr]& & S\ar[d]\\
				S\ar[d]& & S\ar[d] & S\ar[d]\\
				S\ar[d]& & y & z \\
				v & & &}			
			\end{displaymath}
			The ordering on the set of occurrences of free variables in this formula is simply:
			\begin{equation*}
			\qcr{y} \preccurlyeq_{\phi} \qcr{z}
			\end{equation*} 
		\end{przyklad}
		\begin{przyklad}\label{przyk: compl, template}
			The following formula
			\[\phi = \qcr{\exists z\ \ SSS(z) + v = SSSSSSSS(y)}\]
			has complexity $7$. $\bar{\phi}$ is equal to $\qcr{\exists z\ \ SSS(z) + w = w}$. The template of $\phi$ is
			\[\phi^* = \qcr{\exists z\ \ SSS(z) + e_1 = e_2}\]
			and the corresponding sequence of terms is
			\[\tau_{\phi}=\Seq{\qcr{v},\qcr{SSSSSSSS(y)}}\]
			To give another example: if
			\[\psi = \qcr{SSS(z) + v = SSSSSSSS(y)}\]
			then $\bar{\psi} = \qcr{w = w}$ and
			\[\psi^* = \qcr{e_1 = e_2}.\] 
			The corresponding sequence of terms is
			\[\tau_{\psi} = \Seq{\qcr{SSS(z) + v}, \qcr{SSSSSSSS(y)}}\]
		\end{przyklad}

		So far we introduced four different languages: $\mathcal{L}$, $\mathcal{L}^+$, $\mathcal{L}^w$ and $\mathcal{L}^*$. Let us stress that the last two play only auxiliary roles in our reasoning and the first one was relevant only in the previous chapter and now is replaced by $\mathcal{L}^+$ as the ''basic'' language. Formulae $\term(x)$, $\Sent(x)$ etc. should be read as ''$x$ is a $\mathcal{L}^+$ term'' and ''$x$ is a $\mathcal{L}^+$ sentence'' (respectively). In particular by writing $\forall \phi$ we implicitly quantify over $\mathcal{L}^+$ sentences (compare Convention \ref{konw: glupia konwencja}).
		
		\begin{definicja}
			$\CT_0^+$ is the theory containing the following axioms
			\begin{itemize}
				\item $\PA^+$
				\item compositional axioms for $T$ for the language $\mathcal{L}^+$, as in Definition \ref{CT}.
				\item $\Delta_0-$induction for formulae of the language $\mathcal{L}^+\cup\{T\}$.
				\item \df{Generalized regularity principle}:
				\begin{equation}\label{genregularity}\tag{GREG}
				\forall \phi \forall x,y\ \ \bigl(\TermSeq(x)\wedge \TermSeq(y)\wedge  \val{x} = \val{y} \rightarrow T(\phi(x)) \equiv T(\phi(y)) \bigr)
				\end{equation} 
				
			\end{itemize}
			
		\end{definicja}
		
		By an adaptation of methods of Enayat--Visser from their proof of conservativity of $\CT^-+\eqref{II}$, one can show that $\CT^- + \eqref{II} + \eqref{genregularity}$ is conservative over $\PA$.

		\begin{lemat}\label{uwaga: ext t-t^*}
		
		$\CT_0^+\vdash\forall \phi \forall x,y \ \ \bigl(\TermSeq(x)\wedge y=\val{x} \rightarrow T(\phi(x))\equiv T(\phi(y^*))\bigr)$
	\end{lemat}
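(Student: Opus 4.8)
The plan is to derive this lemma directly as an instance of the generalized regularity principle \eqref{genregularity}, after recognizing the right-hand side $\phi(y^*)$ as a substitution of a sequence of closed terms whose values agree with those of $x$. The entire content lies in verifying the antecedent of \eqref{genregularity} for the pair $(x, y^*)$, rather than for the pair $(x,y)$ named in the hypothesis (which cannot be used directly, since $y = \val{x}$ is a sequence of \emph{values}, not of terms, so $\TermSeq(y)$ need not hold).

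First I would invoke Remark \ref{rem: t=valt*}, which supplies exactly the two facts needed: that $y^*$ is a sequence of closed terms, i.e. $\TermSeq(y^*)$, and that $y = \val{y^*}$. Combining the latter with the hypothesis $y = \val{x}$ yields
\begin{displaymath}
\val{x} = y = \val{y^*}.
\end{displaymath}
Moreover, since $\len(y^*) = \len(y) = \len(x)$, the substitutions $\phi(x)$ and $\phi(y^*)$ are simultaneously defined (the length of each sequence bounds the number of distinct variables it can cover), so no side condition in the definition of $\Subst$ obstructs the argument.

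With both $\TermSeq(x)$ (a hypothesis of the lemma) and $\TermSeq(y^*)$ established, and with $\val{x} = \val{y^*}$ in hand, the pair $(x, y^*)$ satisfies the antecedent of \eqref{genregularity}. Instantiating that axiom of $\CT_0^+$ with this pair immediately delivers $T(\phi(x)) \equiv T(\phi(y^*))$, which is the desired conclusion, and discharging the outer quantifiers over $\phi, x, y$ completes the proof. I do not expect a genuine obstacle here: the only subtle point is that $y^*$, though built syntactically from the projection terms $(\num{y})_{\num{i}}$ of the enriched language $\mathcal{L}^+$, nonetheless carries precisely the values $\val{x}$, and this is exactly the identity $y = \val{y^*}$ furnished by Remark \ref{rem: t=valt*}. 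This is, in fact, the reason the language had to be extended with the projection function symbol in the first place, so that a canonical term naming each given value is available to feed into \eqref{genregularity}.
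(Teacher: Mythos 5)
Your proposal is correct and follows essentially the same route as the paper's own proof: both apply Remark \ref{rem: t=valt*} to obtain $\TermSeq(y^*)$ and $\val{x} = y = \val{y^*}$, and then instantiate the generalized regularity principle \eqref{genregularity} with the pair $(x, y^*)$. The extra checks you include (the length bookkeeping and the explicit observation that $(x,y)$ itself cannot be fed into \eqref{genregularity}) are sound elaborations of details the paper leaves implicit.
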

	\begin{proof}
			Using the definitions introduced above and Remark \ref{rem: t=valt*} one shows that provably in $\PA^+$ for every sequence $\sigma$ and every sequence of terms  $\tau$ such that $\sigma=\val{\tau}$,
			
			\[\val{\tau} = \sigma = \val{\sigma^*}\]
			hence by the Generalized regularity principle
			\[T(\phi(\tau))\equiv T(\phi(\sigma^*))\]
			which ends the proof.
		\end{proof}
		
		
		\begin{definicja}[$\PA^+$]
			Let $\phi$ be any $\mathcal{L}^*$ formula and let $x$ be any $\mathcal{L}^+$ variable not occurring in $\phi$ (either as free or a bounded one). Let $\tau_x$ be the sequence of terms satisfying the following conditions:
			\begin{enumerate}
				\item $\len(\tau_x) = \len(\tau_{\phi})$
				\item for every $i\in E(\phi)$, $(\tau_x)_i = \qcr{(x)_i}$
			\end{enumerate}
			We define $\phi_x = \Subst(\phi^*,\tau_x)$.
		\end{definicja}
		
		\begin{przyklad}
			Let $\phi = \qcr{\exists z\ \ SSS(z) + v = SSSSSSSS(y)}$ be the formula from Example \ref{przyk: compl, template}. Then $\phi^* = \qcr{\exists z\ \ SSS(z) + e_1 = e_2}$ and 
			\[\phi_x = \qcr{\exists z\ \ SSS(z) + (x)_0 = (x)_1}\]
		\end{przyklad}

		\begin{definicja}[$\PA^+$]
			The \df{index} of a formula $\phi$ is the maximum of its complexity and the greatest index of a variable occurring in it (either as free or a bounded one).
		\end{definicja}
		
		\begin{przyklad}
			Templates of sentences of index $\leq 3$ are precisely (codes of) the following ones:
			\begin{enumerate}
				\item $e_1 = e_2$, $\neg(e_1 = e_2)$
				\item $(e_1 = e_2)\wedge (e_3=e_4)$, $(e_1 = e_2)\vee (e_3=e_4)$
				\item $Q x_i (x_i = e_1)$, $Q x_i (e_2 = x_i)$, $Q x_i (e_1 = e_2)$, $Q x_i (x_i=x_i)$, for $i\leq 3$ and $Q\in\was{\forall,\exists}$.
			\end{enumerate}
		\end{przyklad}
		
		The following is the last technical definition in our paper.
		
		\begin{definicja}[$\PA^+$]
			Let $\phi^*$ be a template. Two sequences of terms $\tau,\sigma$ are said to be $\phi^*$-\df{equivalent} if for all $i$ such that $e_i$ occurs in $\phi^*$, $(\tau)_i = (\sigma)_i$. If $\tau$ and $\sigma$ are $\phi^*$ equivalent, we denote it by $\tau\sim_{\phi^*}\sigma$.
		\end{definicja}
		
		\begin{przyklad}
			Let $\phi = \qcr{\exists x (S(x) = 0\wedge y<z)}$. Then $\phi^* = \qcr{\exists x\ \ S(x) = 0\wedge e_1<e_2}$ and the following two sequences of terms are \emph{not} $\phi^*$ equivalent:
			\begin{enumerate}
				\item $\Seq{\qcr{S0}, \qcr{0}}$
				\item $\Seq{\qcr{0}, \qcr{0}}$
			\end{enumerate}
		\end{przyklad}
		
		\begin{uwaga}[$\PA^+$]\label{uwaga: glupia uwaga}
			If $\tau\sim_{\phi^*}\sigma$ then $\phi^*(\tau) = \phi^*(\sigma)$.
		\end{uwaga}
		
		\begin{uwaga}\label{uwaga: ext T}
			Let us observe that provably in $\PA^+$ we have: for any (arithmetical) formula $\phi$ and any sequence of terms $\tau$ it holds that
			\begin{equation}\label{1}
			\phi_x(\num{\tau}) =_{df} \Subst(\phi_x,\num{\tau}) = \Subst(\phi^*,\tau^*) =_{df}\phi^*(\tau^*)
			\end{equation}
			Hence, in $\CT_0^+$ we have: for every $\mathcal{L}^+$ sentence $\phi$, every sequence $\sigma$ and every sequence of terms $\tau$ such that $\tau\sim_{\phi^*}\tau_{\phi}$ (for the definition of $\tau_{\phi}$ see Definition \ref{def: dluga} point \ref{tefi}) and $\sigma=\val{\tau}$
			\begin{align*}
			T\phi&\equiv T\phi^*(\tau) && \textnormal{by definitions of $\phi^*$ and $t_{\phi}$ and Remark \ref{uwaga: glupia uwaga}}\\
			&\equiv T\phi^*(\sigma^*) && \textnormal{by Lemma \ref{uwaga: ext t-t^*}}\\
			&\equiv T\phi_x(\num{\sigma})&& \textnormal{by \eqref{1}}
			\end{align*}
			In particular for any $\tau\sim_{\phi^*}\tau'$ if $\sigma = \val{\tau}$ and $\sigma' = \val{\tau'}$ we have
			\[T\phi_x(\num{\sigma})\equiv T\phi_x(\num{\sigma'})\]
		\end{uwaga}
		
		After these comments we are ready to state and prove our second theorem. 
		
		\begin{tw}
			$\CT_0^+$ proves the Global Reflection Principle and the Axiom Soundness Property for $\PA$.
		\end{tw}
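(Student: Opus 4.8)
The plan is to prove the Global Reflection Principle for the original predicate $T$ directly, by the usual induction on the height of a sequent-calculus proof $d$: for every sequent occurring in $d$ and every substitution of closed terms for its free variables, if all antecedent formulae are $T$-true then some succedent formula is $T$-true. Over a theory with full induction for the truth language this argument is routine, using compositionality for the connective rules and the term-formulation of the quantifier axioms for the quantifier rules. Since $\CT_0^+$ contains \eqref{genregularity}, the predicate $T$ is regular, and hence (exactly as in the derivation of Lemma \ref{lem: aksjomaty kwantyfikatorowe dla T prim}) satisfies the quantifier axioms in the term formulation; so the only genuine obstacle is that the soundness-of-sequent statement carries an unbounded quantifier over substitutions and is therefore $\Pi_1$ in $T$, whereas $\CT_0^+$ affords only $\Delta_0(T)$-induction. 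The entire purpose of $\PA^+$, the projection function, and the template apparatus is to remove this obstacle.

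First I would carry out the \emph{reduction to templates}. Fix a proof $d$; every formula occurring in it has complexity and variable-index bounded by some $b\in M$, so by property FIN only an $M$-finite set of templates $\phi^*$ occurs. By Remark \ref{uwaga: ext T} (which rests on Lemma \ref{uwaga: ext t-t^*} and \eqref{genregularity}), for each such formula $\phi$ and each substitution one has, provably in $\CT_0^+$,
\[
T\phi \equiv T\bigl(\phi_x(\num{\sigma})\bigr),
\]
where $\phi_x$ depends only on the template $\phi^*$ and $\sigma=\val{\tau}$ is the \emph{value}-sequence of the substituted terms. In this way the unbounded quantifier ``for all substitutions of terms'' collapses, via regularity, to a quantifier over value-sequences $\sigma$ fed as a single numeral $\num{\sigma}$ into one of finitely many one-variable formulae $\phi_x$. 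This is precisely the manoeuvre that sidesteps the quantifier mismatch which made the naive \eqref{GC} false in the previous section: substitution of the single numeral $\num{\sigma}$ is exactly what the internal induction axiom \eqref{II} can digest.

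Second, and this is the heart of the matter, I would upgrade $\Delta_0(T)$-induction to full induction for the formulae built from the reduced predicates $\sigma\mapsto T(\phi_x(\num{\sigma}))$, reproducing for the original $T$ the mechanism of Lemmata \ref{lem_generalised_commutativity} and \ref{lemat_induktywnosc_TTc}. The key observation is that, because each $\phi_x$ is of fixed (template-)shape, compositionality of $T$ lets one push $T$ through any boolean combination and any arithmetical quantifier prefix applied to such predicates, obtaining a $\CT_0^+$-provable equivalence with $T(\chi(\num{\cdot}))$ for a single arithmetical $\chi$. Internal induction \eqref{II} is available in $\CT_0^+$ by the same straightforward $\Delta_0(T)$-induction that establishes Lemma \ref{internal}, and it then yields full induction for these compound formulae; property FIN guarantees that this works uniformly over the $M$-finitely-many templates occurring in $d$, so the conclusion is a genuine (coded) induction usable inside the proof-induction over $d$, whose length may be nonstandard. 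With full induction thus restored for exactly the formulae that arise, the standard soundness argument goes through and delivers $T\phi$ whenever $\Prov_T(\phi)$, that is, \eqref{GRP}.

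Finally, the Axiom Soundness Property \eqref{SSP} for $\PA$ is obtained by the same induction technology. The finitely many non-induction (logical and Robinson) axioms are $T$-true directly by compositionality and the Tarski biconditionals; for the induction scheme one first checks, by $\Delta_0(T)$-induction, that every parameter-free instance is $T$-true, and then shows by induction on the length of the universal prefix that every universal closure of an induction instance is $T$-true, the required induction on the prefix length being supplied by the same reduction-plus-internal-induction argument. I expect the third paragraph to be the main obstacle: establishing the full-induction-from-bounded-induction step uniformly across the nonstandard family of templates of $d$ is where $\PA^+$, the projection symbol, the refined complexity measure (FIN together with COM), and \eqref{genregularity} must all cooperate, and it is the exact analogue of the delicate generalized-commutativity step that was the crux of the first theorem.
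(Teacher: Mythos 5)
Your overall architecture (reduce to templates via \eqref{genregularity} and Remark \ref{uwaga: ext T}, restore full induction via internal induction, then run the sequent-calculus soundness induction and transfer back to $T$) is indeed the paper's architecture, and your first and last paragraphs are essentially right. But your ``heart of the matter'' paragraph has a genuine gap, and it sits exactly where the paper's key construction is missing from your proposal. You propose to obtain full induction for ``formulae built from the reduced predicates $\sigma \mapsto T(\phi_x(\num{\sigma}))$'' by pushing $T$ through boolean combinations and quantifier prefixes, with FIN supplying ``uniformity over the $M$-finitely-many templates''. This does not work as stated: the soundness statement for the proof $d$ (``for every sequent in $d$ and every value-sequence, if all antecedent formulae are true then some succedent formula is true'') is \emph{not} a boolean/quantifier combination of standardly many reduced predicates. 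Which template a formula of $d$ falls under varies with its position in $d$, and the set $\gamma(c)$ of templates, though $M$-finite, is in general of nonstandard size; a ``disjunction'' of nonstandardly many $\mathcal{L}^+\cup\{T\}$-formulae is not a formula at all, and the meta-induction behind Lemma \ref{lem_generalised_commutativity} only treats formulae of \emph{standard} shape built over a \emph{fixed} unary predicate. FIN by itself cannot supply the uniformity you need.

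What is needed --- and what the paper does --- is to internalize the case-split over templates as a single (in general nonstandard, coded) \emph{arithmetical} formula
\[
T_c(x) := \bigvee_{\phi\in\gamma(c)}\Bigl(\exists y,z\ \ \bigl(\TermSeq(y) \wedge x = \Subst(\phi,y) \wedge z=\val{y} \wedge \phi_z\bigr)\Bigr),
\]
which is well-formed precisely because the projection symbol of $\PA^+$ keeps the quantifier block of each disjunct standard, and then to work with the single predicate $T'_c(x)=T\bigl(T_c(\num{x})\bigr)$. Since $T_c$ enters only as a fixed parameter inside $T$, the mechanism of Lemmata \ref{lem_generalised_commutativity} and \ref{lemat_induktywnosc_TTc} applies verbatim and gives full induction for all standard-shape formulae built from $T'_c$ --- in particular for the soundness statement, which \emph{is} such a formula, uniformly in the position $i\leq\len(d)$. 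One then needs the agreement lemma: $T(\phi)\equiv T'_c(\phi)$ for every sentence $\phi$ of index at most $c$, whose proof uses disjunctive correctness \eqref{DC} of $T$ to isolate the unique true disjunct of $T_c(\num{\phi})$, combined with your template reduction via Remark \ref{uwaga: ext T}; note that \eqref{DC} appears nowhere in your proposal, which is a symptom of the same omission. With $T'_c$ and the agreement lemma in hand, your proof-induction and your treatment of the Axiom Soundness Property go through; without them, the central induction step has no formula to induct on.
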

		\begin{proof}
			We work in $\CT_0^+$. The fact that $\CT_0^+$ proves that all axioms of $\PA^+$ are true is obvious. Let us show the Global Reflection Principle. 	Note that, provably in $\PA^+$, for every $c, d$ there are only finitely many templates for formulae of index less than $c$. Each such formula can be obtained from one of those finitely many templates by the procedure of substituting terms for the $e_i$'s. Let $\gamma(c)$ be a (code of a) set of all templates for sentences of index at most $c$. Let $y,z$ be any $\mathcal{L}^+$ variables which do not occur in those sentences. As in the main theorem we shall make use of simplistic truth predicates. But this time working in extended language enables us to make them even more simplistic. Let us put:
			\begin{eqnarray}
			T_c(x) := \bigvee_{\phi\in\gamma(c)}\bigl(\exists y,z\ \ (\TermSeq(y) \wedge x = \Subst(\phi,y) \wedge z=\val{y} \wedge \phi_z)\bigr)\nonumber
			\end{eqnarray}
			For example the disjuncts of $T_3(x)$ corresponding to the templates $\qcr{e_1=e_2}$ and $\qcr{\exists x_2(x_2 = e_1)}$ are the following
			\begin{equation}
			\bigl(\exists y,z\ \ (\TermSeq(y) \wedge x = \Subst(\qcr{e_1 = e_2},y) \wedge z=\val{y} \wedge (z)_0 = (z)_1)\bigr)\nonumber
			\end{equation}
			\begin{equation}
			\bigl(\exists y,z\ \ (\TermSeq(y)\wedge x = \Subst(\qcr{\exists x_2(x_2 = e_1)},y)\wedge z= \val{y} \wedge \exists x_2(x_2 = (z)_0))\bigr)\nonumber
			\end{equation}			
			Observe that having projection functions as primitive symbols in the language makes it possible to use finitely many quantifiers at the beginning of each disjunct of $T_c(x)$. In case of their absence we would have to add one quantifier for each variable $e_i$ of a template $\phi$, which in case of non-standard formulae would result in a block of quantifiers of non-standard length. As in the proof of the main theorem the function $c\mapsto T_c$ is primitive recursive. As in the proof of our main theorem, define
			\[T'_c(x) = T\bigl(T_c(\num{x})\bigr)\]
			Arguing exactly as in the proof of Lemma \ref{lemat_induktywnosc_TTc} we show that for every $c$ every formula with $T'_c$ satisfies the induction scheme. 
			This time our simplistic predicates have an additional feature: for any sentence $\phi$ of index less or equal to $c$ we have
			\[T(\phi)\equiv T'_c(\phi)\]
			Note that it follows from the above, that $TT_c$ are compositional on the sentences of index less than $c$. To prove the above assertion let us fix a sentence $\phi$ of index less than or equal to $c$ and observe that for $\sigma=\val{\tau_{\phi}}$ we have
			\begin{eqnarray}
			T\bigl(T_c(\num{\phi})\bigr) &\equiv& T \biggl( \bigvee_{\psi\in\gamma(c)}\bigl(\exists y, z\ \ (\TermSeq(y)\wedge \phi = \Subst(\psi,y)\wedge z=\val{y} \wedge \psi_z)\bigr) \biggr) \nonumber\\
			&\equiv& T(\phi_z(\num{\sigma}))\nonumber\\
			&\equiv& T(\phi)\nonumber
			\end{eqnarray}
			Indeed, the first equivalence holds by definition, and the third is obtained by Remark \ref{uwaga: ext T}. Let us focus on the second one. From left to right: if
			\[T\biggl(\bigvee_{\psi\in\gamma(c)}\bigl(\exists y,z\ \ (\TermSeq(y)\wedge z=\val{y}\wedge \phi = \Subst(\psi,y)\wedge\psi_z)\bigr)\biggr)\]
			holds then by Disjunctive Correctness of $T$ we get that for some template $\psi$ it holds that
			\[T\bigl(\exists y,z\ \ (\TermSeq(y)\wedge z=\val{y}\wedge \phi = \Subst(\psi,y)\wedge \psi_z)\bigr)\]  
			Hence for some $\tau',\sigma'$ such that $\sigma'=\val{\tau'}$ we have by compositionality of $T$
			\[\TermSeq(t) \wedge \phi = \Subst(\psi,\tau')\wedge T\psi_z(\num{\sigma'})\]
			But for $\psi\neq\phi^*$ this sentence can be easily disproved in $\CT^-$ (because the middle conjunct is disprovable already in $\PA^+$). Moreover it must be the case that $\tau'\sim_{\phi^*} \tau_{\phi}$. Hence, for $\sigma = \val{\tau_{\phi}}$ we get $T(\phi_z(\num{\sigma}))$ (invoking Remark \ref{uwaga: ext T}). From right to left it is easier: if, for $\sigma = \val{t_{\phi}}$ we have $T(\phi(\num{\sigma}))$ then also
			\[\TermSeq(\tau_{\phi}) \wedge \phi = \Subst(\phi^*,\tau_{\phi})\wedge \sigma=\val{\tau_{\phi}}\wedge T\phi_z(\num{\sigma})\] 
			Hence $T(\exists y,z\ \ \TermSeq(y)\wedge \phi = \Subst(\phi^*,y)\wedge z=\val{y}\wedge \phi_z)$ and once again by Disjunctive Correctness we get
			\[T \biggl( \bigvee_{\psi\in\tau(c)}\bigl(\exists y,z\ \ (\TermSeq(y)\wedge \phi = \Subst(\psi,y)\wedge z=\val{y} \wedge\psi_z)\bigr) \biggr)\]
			Now we proceed as in the proof of our main theorem. Let $d$ be a proof in First Order Logic of a sentence $\phi\in \mathcal{L}^+$ from true premises. There is a $c$ such that each formula $\psi$ occurring in $d$ is of index at most $c$. Then by induction on the length of $d$ we check that in each sequent $\Gamma\longrightarrow\Delta$ in $d$ if for every $\psi$ in $\Gamma$ we have $T'_c(\psi)$, then for some $\theta$ in $\Delta$ we have $T'_c(\psi)$. This is legitimate, since $T'_c(x)$ is inductive and compositional on formulae occurring in $d$. Hence $T'_c(\phi)$ and by the above considerations also $T(\phi)$.
		\end{proof}

		\section{Appendix}

		In the following section we shall discuss the proof by Kotlarski and the mistake that has been pointed out by Richard Heck and Albert Visser. We decided to include this discussion, since the alleged result has been cited, sometimes with repeating the erroneous proof, in at least three different papers, some of them written already after the gap in the proof has been observed. Let us present Kotlarski's argument.
		
		We would like to show that $\CT_0$ proves that all theorems of $\PA$ (axiomatised with a parameter-free induction scheme) are true. We know that $\CT_0$ proves that any instance of the parameter-free induction scheme for arithmetical formulae under any substitution of closed terms for free variables is true. It suffices to show that for any proof $d$ formalized in Hilbert system if all its premises are true, then the conclusion is true. But the only rule of Hilbert calculus, namely Modus Ponens, is clearly truth-preserving. In the Hilbert system, we assume only finitely many axiom schemes for first-order logic and arbitrary propositional tautologies. We may also easily check in $\CT^-$ that all these finitely many axiom schemes are true for arbitrary sentences. By $\Delta_0$-induction we may check that all sentences which are propositional tautologies are true. Thus every provable sentence is true.
		
		However Kotlarski overlooked a crucial detail in the formulation of \newline Hilbert calculus. We can assume that metavariables $\phi, \psi$ occurring in the axiom schemes for this calculus represent arbitrary arithmetical formulae, not necessarily sentences. In such a case we apparently have to add a generalization rule  to Hilbert calculus. Then if we want to show by induction that first-order derivations preserve truth, the actual induction thesis should  state something like: ''for any substitution of terms for free variables the resulting sentence is true'' which is a $\Pi_1$ statement. In another possible formulation of Hilbert calculus we may assume that metavariables $\phi,\psi$ occurring in its axiom schemes represent only arithmetical \emph{sentences}, but then we have to take as logical axioms arbitrary \emph{universal closures} of propositional tautologies rather than tautologies themselves. But then in turn we have to prove the following statement: universal closures of propositional tautologies are true. So consider any sentence of the form:
		\begin{displaymath}
		\forall x_1 \forall x_2 \ldots \forall x_c \ \ \phi(x_1, \ldots, x_c),   
		\end{displaymath}	    
		where $\phi(x_1, \ldots, x_c)$ is a propositional tautology. Using $\Delta_0$-induction for the truth predicate we may indeed prove that for arbitrary numerals $\num{a_1}, \ldots, \num{a_c}$ the following sentence is true:
		
		\begin{displaymath}
		\phi(\num{a_1}, \ldots, \num{a_c}).
		\end{displaymath}		   
		
		Then by compositional axioms we may even prove for any fixed standard $k$ (i.e. an element of true $\omega$, viewed externally) that the following is also true, where the block of the universal quantifiers is of standard length:
		
		\begin{displaymath}
		\forall x_{c-k} \ldots \forall x_c \ \ \phi(\num{a_1}, \ldots, \num{a_{c-k-1}}, x_{c-k}, \ldots, x_c).
		\end{displaymath}
		
		But to prove that the whole universal closure is true we apparently need to use induction over the following statement: ''For all $y<c$ and all numerals $\num{a_1}, \ldots, \num{a_{c-y}} $ the sentence  $\forall x_{c-y} \ldots \forall x_c \ \ \phi(\num{a_1}, \ldots, \num{a_{c-y-1}}, x_{c-y}, \ldots, x_c)$ is true''. And to do this we would need $\Pi_1$ induction, since we quantify over arbitrary numerals. 
		
		Moreover, it seems that the problem of nonstandard blocks of universal quantifiers is not a mere technicality. Note that taking universal closures of propositional tautologies in Hilbert calculus basically allows us to dispense of eigenvariables of sequent calculus and it seems reasonable that to prove that reasoning in sequent calculus preserves truth, we really need to quantify over all possible substitutions of terms for eigenvariables and thus basically we are forced to employ $\Pi_1$ induction for the truth predicate.

		\section{Acknowledgements}
		
		We are extremely grateful to our supervisor, Cezary Cieśliński for his invaluable support and for introducing us to the problem of the conservativity of $\Delta_0$-induction for the compositional truth predicate. We would also like to express our sincere gratitude to the anonymous referee, who spotted a substantial but subtle mistake in the first version of the paper. We are also grateful to Ali Enayat and Albert Visser for extremely interesting discussions. 
		
		The research presented in this paper was supported by the National Science Centre, Poland (NCN), grant number 2014/13/B/HS1/02892.

		\nocite{*}
		\bibliography{NotesCT2}

\end{document}